\newtheorem{dummy}{anything}[section] 
\newtheorem{theorem}[dummy]{Theorem}
\newtheorem*{thma}{Theorem A}
\newtheorem*{thmb}{Theorem B}
\newtheorem*{thmc}{Theorem C}
\newtheorem{lemma}[dummy]{Lemma} 
\newtheorem{proposition}[dummy]{Proposition} 
\newtheorem{corollary}[dummy]{Corollary}
\theoremstyle{definition}
\newtheorem{definition}[dummy]{Definition}
 \newtheorem{remark}[dummy]{Remark}
\newcommand
{\eqncount}{\setcounter{equation}{\value{dummy}}%
\addtocounter{dummy}{1}}
\newcommand{\cN}{\mathcal N}
\newcommand{\cP}{\mathcal P}
\newcommand{\cI}{\mathcal I}
\newcommand{\bH}{\mathbf H}
\newcommand{\bP}{\mathbf P}
\newcommand{\CP}{\mathbf C \bP}
\newcommand{\CH}{\mathbf C\bH}
\newcommand{\bbL}{\mathbb L}
\newcommand{\bbZ}{\mathbb Z}
\newcommand{\cy}[1]{\bbZ/{#1}}
\newcommand{\wX}{\widetilde X}
\newcommand{\wM}{\widetilde M}
\newcommand{\bd}{\partial}
\newcommand{\vv}{\, | \,}
\newcommand{\La}{\Lambda}
\newcommand{\Zpi}{\bbZ \pi}
\newcommand{\mmatrix}[4]{\left (\vcenter
{\xymatrix@C-2pc@R-2pc{#1&#2\\#3&#4} }
\right )}
\newcommand{\la}{\langle}
\newcommand{\ra}{\rangle}
\DeclareMathOperator{\gdim}{g-dim}
\DeclareMathOperator{\Hom}{Hom}
\DeclareMathOperator{\wh}{Wh}
\DeclareMathOperator{\Mod}{mod}
\DeclareMathOperator{\Sharp}{\#}
\DeclareMathOperator{\Image}{Im}
\DeclareMathOperator{\Aut}{Aut}
\DeclareMathOperator{\sign}{sign}
\DeclareMathOperator{\Arf}{Arf}
\DeclareMathOperator{\cok}{cok}
\DeclareMathOperator*{\colim}{colim}
\newcommand{\Bpi}{K(\pi,1)}
 \newcommand{\sMdag}{(s_M)^{\dag}}
 \newcommand{\sMdagg}[1]{(s_{#1})^{\dag}}
\begin{document}

\title[$4$-manifolds with geometrically $2$-dimensional fundamental groups]
{Topological $4$-manifolds with geometrically $2$-dimensional fundamental groups}
\author{Ian Hambleton, Matthias Kreck, and Peter Teichner} 
\address{Department of Mathematics \& Statistics
 \newline\indent
McMaster University
 \newline\indent
Hamilton, ON  L8S 4K1, Canada}
\email{ian{@}math.mcmaster.ca}
\address{Hausdorff Research Institute for Mathematics
 \newline\indent
Poppelsdorfer Allee 82
 \newline\indent
D-53115 Bonn, Germany}
\email{kreck@HIM.uni-bonn.de}
\address{Department of Mathematics
 \newline\indent
University of California at Berkeley
 \newline\indent
 Berkeley, CA, 94720-3840, USA}
\email{teichner@math.berkeley.edu}
\thanks{The authors would like to thank the Max Planck Institut f\"ur Mathematik and the Hausdorff Institute in Bonn for its hospitality and support while this work was in progress. The authors were partially supported by NSERC Discovery Grant A4000,  DFG grant KR814, and NSF grant DMS-0453957.}

\date{Mar.~4, 2009}
\begin{abstract}\noindent
Closed oriented $4$-manifolds with the same geometrically $2$-dimensional fundamental group (satisfying certain properties) are classified up to $s$-cobordism by their $w_2$-type, equivariant intersection form and the Kirby-Siebenmann invariant. As an application, we obtain a complete homeomorphism classification of closed oriented $4$-manifolds with solvable Baumslag-Solitar fundamental groups, including a precise realization result.
\end{abstract}
\maketitle
\section{Introduction}\label{one}
In this paper we show how a combination of bordism theory and surgery can be used to classify certain closed oriented $4$-manifolds up to $s$-cobordism. Our results apply to  topological $4$-manifolds with geometrically $2$-dimensional fundamental groups, satisfying the three properties (W-AA) listed in Definition \ref{def:WAA} below.
We will prove in Section~\ref{sec:BS} that  these properties are satisfied by the family of {\em solvable Baumslag-Solitar groups} 
$$B(k) := \{ a,b\vv aba^{-1}=b^k\}, \quad k\in \bbZ.
 $$
The groups $B(k)$ have geometrical dimension $\leq 2$ because the 2-complex corresponding to the above presentation is aspherical. The easiest cases are
$$B(0) = \bbZ, \quad B(1) = \bbZ \times  \bbZ, \quad \text{and\ } B(-1) = \bbZ\rtimes \bbZ, $$
and these are the only Poincar\'e duality groups in this family. Each $B(k)$ is solvable, so is a ``good" fundamental group for topological $4$-manifolds \cite{freedman1}. This implies that Freedman's $s$-cobordism theorem is  available to complete the homeomorphism classification. This had been done previously only for the three special cases above, see \cite{freedman-quinn1} for $B(0)$, and \cite{hillman2} for $B(\pm 1)$, using a more classial surgery approach. 

A basic homotopy invariant of a $4$-manifold $M$ is the \emph{equivariant intersection form}, defined as the triple
$(\pi_1(M,x_0), \pi_2(M,x_0), s_M)$,
where $x_0\in M$ is a base-point, and
$$s_M \colon \pi_2(M,x_0) \otimes_{\bbZ} \pi_2(M,x_0) \to \bbZ[\pi_1(M,x_0)] 
$$ 
is the form described in Section \ref{sec:strategy}, formula (\ref{sMdef}). This pairing is $\Lambda$-hermitian, in the sense that for all $\lambda\in\Lambda:=\bbZ[\pi_1(M,x_0)] $ we have
\[ s_M(\lambda\cdot x, y) =\lambda \cdot s_M(x,y)  \quad \text{ and } \quad s_M(y, x)= \overline{s_M(x,y)}
\]
where $\lambda\mapsto \bar\lambda$ is the involution on $\Lambda$ given by the orientation character of $M$. In the oriented case studied below, this involution is determined by $\bar g = g^{-1}$ for $g \in \pi_1(M,x_0)$.

 An \emph{isometry} between two such triples is a pair $(\alpha, \beta)$, where $\alpha\colon \pi_1(M,x_0) \to \pi_1(M',x'_0)$ is an isomorphism of fundamental groups, and  $\beta\colon (\pi_2(M,x_0), s_M) \to (\pi_2(M',x'_0), s_{M'})$ is an $\alpha$-invariant isometry of the equivariant intersection forms. We will  assume throughout that our manifolds are connected, so that a change of base-points leads to isometric intersection forms. For this reason, we will omit the  base-points from the notation. 

\smallskip
Recall that an oriented $4$-manifold $M$ has type (I) if $w_2(\wM) \neq 0$, type (II) if $w_2(M)=0$, and type (III) if $w_2(M) \neq 0$ but $w_2(\wM) = 0$. The invariant  $w_2$ is defined for topological manifolds in (\ref{w2def}).

\begin{thma} For closed oriented $4$-manifolds with solvable Baumslag-Solitar fundamental groups, and given type and Kirby-Siebenmann invariant, 
any isometry between equivariant intersection forms can be realized by a homeomorphism.
\end{thma}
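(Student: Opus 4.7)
The plan is to apply Kreck's modified surgery combined with Freedman's $s$-cobordism theorem, which is available here because each solvable Baumslag--Solitar group $B(k)$ is a good group in Freedman's sense. Given closed oriented $4$-manifolds $M,M'$ of the same type and Kirby--Siebenmann invariant together with an isometry $(\alpha,\beta)$ of their equivariant intersection forms, the goal is to build a normal bordism $W$ over a suitable normal $1$-type and surger it to an $s$-cobordism; Freedman's theorem then produces a homeomorphism $M\approx M'$ realizing $(\alpha,\beta)$.

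The first step is to fix a normal $1$-type $\xi\colon\cB\to B\!\operatorname{STOP}$ determined by the $w_2$-type: informally, $\cB$ is built from $K(B(k),1)$ together with a lift recording the topological spin structure on the universal cover in types (I) and (III), and on $M$ itself in type (II). Because $B(k)$ has geometric dimension two, we may take a two-dimensional $K(B(k),1)$, so $\cB$ is a low-dimensional space above $B\!\operatorname{STOP}$. The isometry $\beta$, together with matching $w_2$-type and matching Kirby--Siebenmann invariant, is then used to lift $M$ and $M'$ to $\cB$-normal smoothings whose boundary data are identified via $\alpha$, and to verify that they represent the same class in $\Omega_4^{\operatorname{TOP}}(\cB)$. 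The latter is a bordism computation via the Atiyah--Hirzebruch spectral sequence, feasible because $\cB$ is a two-complex above $B\!\operatorname{STOP}$.

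The second step is to surger the resulting normal bordism $W$ to an $s$-cobordism. Elementary surgeries below the middle dimension can make $W\to\cB$ a $2$-equivalence. The remaining middle-dimensional obstruction lies in Kreck's monoid $\ell_5(\bbZ[B(k)])$, and the prescribed isometry of intersection forms on the boundary fixes its image in the relevant quotient. The (W-AA) properties of $B(k)$ are precisely what is needed to guarantee vanishing of this obstruction: they control the kernel of assembly and the behaviour of $w_2$, so that the hyperbolic stabilizations demanded by Kreck's theory can be realized and then undone geometrically. I expect this middle-dimensional step to be the principal obstacle, as it requires embedding dual spheres using Freedman's disk-embedding theorem for good groups and a delicate translation of the algebraic isometry into explicit geometric surgeries. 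Once the $s$-cobordism is produced, Freedman's theorem for the good group $B(k)$ converts it into a product, and the induced homeomorphism realizes $(\alpha,\beta)$.
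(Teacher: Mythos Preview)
Your overall framework is right (Kreck's modified surgery plus Freedman's $s$-cobordism theorem for good groups), but the proposal has a genuine gap in the choice of normal type. You work over the normal $1$-type $\cB$ built from $K(B(k),1)$ and the $w_2$-data. With this choice, after making $W\to\cB$ a $2$-equivalence, Kreck's obstruction $\Theta(W)$ lies only in the monoid $l_5(\bbZ[B(k)])$, not in the group $L_5(\bbZ[B(k)])$. By \cite[Prop.~8(i)]{kreck3} the half-rank summand $V$ is isometric to $\ker(\pi_2(M)\to\pi_2(\cB))$, but for the normal $1$-type this kernel is all of $\pi_2(M)$, on which $s_M$ is typically nonzero. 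So $V$ is not a lagrangian and $\Theta(W)$ need not lie in $L_5$; the (W-AA) hypotheses concern $L_5$ and $L_4$ and give you no purchase on elements of $l_5\setminus L_5$. Your sentence ``the prescribed isometry of intersection forms on the boundary fixes its image in the relevant quotient'' is exactly the step that is missing a mechanism.

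The paper's device for closing this gap is to replace the normal $1$-type by the \emph{reduced normal $2$-type} $B(P,w)$, where $P$ has $\pi_1(P)=\pi$ and $\pi_2(P)=\pi_2(M)/R(s_M)$. Then $\ker(\pi_2(M)\to\pi_2(B))=R(s_M)$, on which $s_M$ vanishes by definition, so $V$ is automatically totally isotropic and $\Theta(W)\in L_5(\bbZ\pi)$; surjectivity of $A_5$ then kills it (Theorem~\ref{thm:reduced}). This choice also introduces a new bordism invariant $c_*[M]\in H_4(P;\bbZ)$, and one must show it, together with the $H_2(\pi;\cy 2)$-invariant, is determined by $s_M$. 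The paper does this via Theorem~\ref{thm:fundamentalclass} (the map $\omega\circ tr$ is injective) and Lemma~\ref{lem:theta_relation} (relating the $H_2(\pi;\cy 2)$-invariant to $\kappa_2$), using the injectivity of $A_4$. Your proposal does not address either of these points; even over the normal $1$-type you would still need the $\kappa_2$ argument to match the $H_2(\pi;\cy 2)$ bordism invariant from the isometry alone. Finally, Theorem~A follows from Theorem~C because $H^2(B(k);\cy 2)\subseteq\cy 2$, so the $w_2$-type reduces to the type.
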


The invariants in Theorem A are not independent. For example, $M$ has type~(I)  if and only if the equivariant intersection form $s_M$ is \emph{odd}. This algebraic condition means that the identity component of $s_M(x,x)$ is an odd integer for some $x\in \pi_2(M)$. In the other case, if the identity component of $s_M(x,x)$ is always even, then we say that  $s_M$ is an {\em even} equivariant intersection form. This occurs in types (II) and (III).

\smallskip
The Kirby-Siebenmann invariant $KS(M) \in \cy 2$ (see \cite[p.~300]{kirby-siebenmann1}) is determined for spin manifolds by Rochlin's formula
$$ KS(M)\equiv \sign(M)/8 \ (\Mod 2)
$$
where $\sign(M)$ is the {\em signature} of the $4$-manifold $M$, defined via the ordinary intersection form on $H_2(M; \bbZ)$. For fundamental groups $\pi$ with $H_4(\pi;\bbZ)=0$ we show in Remark \ref{signature_remark} that this signature is determined by $s_M$ via the formula 
\[
\sign(M) = \sign(s_M\otimes_{\Lambda} \bbZ).
\]
 This formula does not hold in general, as one can see from examples of surface bundles over surfaces with nontrivial signature (but vanishing $\pi_2$). 

For $\pi_1(M)=B(k)$, type~(III) can only occur if $k$ is odd. In this case, we have the following generalization of Rochlin's formula, proven in Corollary~\ref{cor:Arf}:
 \[
KS(M) \equiv \sign(M)/8 + \Arf (M)\ (\Mod 2)
\] 
where $\Arf(M)\in \cy 2$ is a codimension~2 Arf invariant explained in Section~\ref{sec:BS}. 

For any closed oriented $4$-manifold $M$, the radical $R(s_M) \subseteq \pi_2(M)$ of the intersection form $s_M$ is completely determined by the fundamental group $\pi$ of $M$,  via an isomorphism $R(s_M)\cong H^2(\pi; \Zpi)$ proved in Corollary \ref{cor:rad}. We define $\pi_2(M)^\dag:=\pi_2(M)/R(s_M)$.

\begin{thmb}  For any closed oriented $4$-manifold $M$ with fundamental group $\pi=B(k)$, the quotient  $\pi_2(M)^\dag$ is a finitely generated, stably-free $\Zpi$-module and the induced form $\sMdag$ is non-singular. Conversely,
\begin{enumerate}
\item
Any non-singular $\Zpi$-hermitian form on a finitely generated, stably-free $\Zpi$-module is realized as $\sMdag$ by a closed oriented $4$-manifold $M$ with fundamental group $B(k)$. 
\item
Up to homeomorphism, there are exactly two such manifolds for odd forms, distinguished by the Kirby-Siebenmann invariant. If $k$ is even, an even form determines a manifold of type~\textup{(II)} uniquely; type~$\textup{(III)}$ does not occur. For $k$ odd, there is exactly one $4$-manifold with a given even intersection form in each type $ \textup{(II)}$ or $ \textup{(III)}$.
\end{enumerate}
\end{thmb}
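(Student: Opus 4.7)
The plan is to combine Theorem~A with explicit realization constructions and a case analysis of the admissible (type,~$KS$) pairs compatible with a given form $\sMdag$. The first assertion---that $\pi_2(M)^\dag$ is finitely generated and stably free, and $\sMdag$ is non-singular---follows from the general machinery of the paper: a classifying map $c\colon M\to K(B(k),1)$ is $2$-connected since $B(k)$ has geometric dimension~$\le 2$, and analysis of the cellular chain complex of the universal cover identifies the radical $R(s_M)\cong H^2(\pi;\Zpi)$ via Corollary~\ref{cor:rad}. The (W-AA) properties (verified for $B(k)$ in Section~\ref{sec:BS}) then ensure that the quotient $\pi_2(M)^\dag$ is stably free and that the adjoint of $\sMdag$ is an isomorphism.

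For the realization in part~(i), I would begin with an explicit model $M_0$ having $\pi_1=B(k)$---for instance the double of a $4$-dimensional handlebody built from the presentation (one $0$-handle, two $1$-handles for $a,b$, one $2$-handle for $aba^{-1}b^{-k}$)---then produce a manifold realizing a prescribed non-singular form $(L,\phi)$ on a stably-free module by combining connected sums with $S^2\times S^2$ (to provide the hyperbolic stabilization needed because $L$ is only stably free) with middle-dimensional handle attachments realizing $\phi$. For the counting in part~(ii), I would invoke Theorem~A: closed oriented $4$-manifolds with $\pi_1=B(k)$ are classified up to homeomorphism by the equivariant intersection form together with type and Kirby-Siebenmann invariant. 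Odd forms force type~(I) but leave $KS$ free, giving two manifolds. For even forms, the Rochlin formula in type~(II) (using Remark~\ref{signature_remark}, since $H_4(B(k);\bbZ)=0$) and the Arf-corrected version in type~(III) (Corollary~\ref{cor:Arf}) determine $KS$ from the form and the type, so the only remaining freedom is between types~(II) and~(III). Computing $H^2(B(k);\bbZ/2)$ from the presentation $2$-complex---the key Fox-derivative augmentation being $\epsilon(\partial r/\partial b)=1-k$---shows this group vanishes for $k$ even and equals $\bbZ/2$ for $k$ odd; since type~(III) requires $w_2(M)$ to be pulled back from a nonzero class in $H^2(B(k);\bbZ/2)$, type~(III) occurs exactly when $k$ is odd.

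The main obstacle is the geometric realization of type~(III) manifolds when $k$ is odd: the cohomology calculation only produces the relevant $w_2$-candidate algebraically, while one must actually construct a closed $4$-manifold with $\pi_1=B(k)$ whose universal cover is spin but whose own $w_2$ equals the pullback of the nontrivial element of $H^2(B(k);\bbZ/2)$. This would be carried out by producing a suitable non-spin $2$-plane bundle over $K(B(k),1)$ and twisting the model $M_0$ accordingly; checking that the resulting type~(III) manifolds, together with $S^2\times S^2$ connected-sum stabilizations and a $KS$-flipping building block in type~(I), exhaust exactly the admissible (form, type, $KS$) triples in each case without overcounting is the technically delicate step.
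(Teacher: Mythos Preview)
There is a genuine gap in your realization argument for part~(i). Your proposal to realize an arbitrary non-singular form $(L,\phi)$ by ``middle-dimensional handle attachments realizing $\phi$'' on a stabilized model $M_0$ is not a complete argument: in dimension~$4$, handle attachments along a cobordism modify the intersection form only in restricted ways (adding or removing hyperbolic summands via surgery on circles and $2$-spheres), and there is no direct mechanism to install a prescribed non-hyperbolic form this way. The question of which forms arise is genuinely algebraic, not merely constructive.

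The paper's approach is quite different. It first invokes topological surgery to observe that a form is realizable if and only if it is \emph{stably} realizable, reducing the problem to showing that $(L,\phi)$ becomes isometric to a realized form after adding hyperbolic summands. For odd forms this follows from the symmetric Witt group computation $L^0(\Zpi)\cong\bbZ$ (Lemma~\ref{inputb}(iv)): any odd form is Witt-equivalent to a standard diagonal form extended from $\bbZ$, and by \cite[Lemma~3]{hteichner1} the metabolic witnesses can be taken hyperbolic, so $(L,\phi)$ is stably isometric to a form realized by connected sums of $\CP^2$ and $-\CP^2$. For even forms, $(L,\phi)$ with a quadratic refinement represents a class in $L_4(\Zpi)$, and the stable bordism computation (Lemma~\ref{stable_bordism}) shows every such class is hit by a closed manifold of type~(II) or~(III). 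This $L$-theoretic reduction is the missing ingredient in your outline, and it also disposes of the type~(III) existence problem you flag as the main obstacle, without any twisted-bundle construction.

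A smaller point: the stably-free property of $\pi_2(M)^\dag$ does not come from (W-AA). It is Corollary~\ref{cor:free}, proved via stable classification (connected sum with $\CP^2$ to reach type~(I), then comparison with $M_0\Sharp N$ for $N$ simply connected), and holds for any geometrically $2$-dimensional $\pi$ regardless of (W-AA). The non-singularity of $\sMdag$ likewise follows from the exact sequence~(\ref{univcoeff}) together with $H^3(\pi;\La)=0$, again without (W-AA). Your part~(ii) analysis via Theorem~A, the Rochlin and Arf-corrected Rochlin formulas, and the computation of $H^2(B(k);\cy 2)$ is correct and matches the paper.
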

\begin{remark} The classification of such stably free modules and  non-singular hermitian forms is a difficult algebraic problem. For example,
there exist odd intersection forms which are not extended from $\bbZ$ for manifolds with these fundamental groups (see \cite[\S 6]{fhmt1}). Moreover, we do not know the minimal rank of an intersection form with nontrivial Arf invariant.
\qed
\end{remark}
A finitely-presented group $\pi$ is \emph{geometrically $2$-dimensional} ($\gdim \pi \leq 2$) if there exists a finite aspherical $2$-complex with fundamental group $\pi$.  Examples of geometrically $2$-dimensional groups include free groups,  $1$-relator groups (e.g.~surface groups) and small cancellation groups \cite{lyndon-schupp1}, provided they are torsion-free, as well as many word-hyperbolic groups  \cite[2.3]{hillman1}, \cite[\S 10]{hillman4}.

 It is attractive to see whether the results of Theorems~A and B also hold for more general fundamental groups of geometric dimension $\leq 2$. In a series of papers \cite{hillman1}, \cite{hillman2}, \cite{hillman3}, \cite{hillman4}, Jonathan Hillman has investigated the homotopy classification of Poincar\' e $4$-complexes under various fundamental group assumptions. In the case of $\gdim\leq 2$, the problem was reduced to the minimal case, also studied here in Section~\ref{sec:minimal}. However, the homotopy classification of minimal models was not completed except for free or surface fundamental groups. Our focus is on the classification of $4$-manifolds up to $s$-cobordism, and our methods differ from the classical surgery approach in the sense that we do not need to understand the homotopy classification first.
 
\smallskip
 We now list the additional properties we will need for the fundamental groups (see \cite{taylor-williams1} for the surgery assembly maps).
\begin{definition}\label{def:WAA}
A group $\pi$ satisfies properties (W-AA) whenever
\begin{enumerate}
\item The Whitehead group $\wh(\pi)$ vanishes, 
\item The assembly map $A_5\colon H_5(\pi;\bbL_0) \to L_5(\Zpi)$ is surjective. 
\item The assembly map $A_4\colon H_4(\pi;\bbL_0) \to L_4(\Zpi)$ is injective. 
\end{enumerate}
\end{definition}

For a $4$-manifold $M$  with fundamental group $\pi$ and even equivariant intersection form $s_M$, there exists a unique class $w_M\in H^2(\pi;\cy 2)$ such that $w_2(M) = c^*(w_M)$ for a 2-equivalence $c\colon M\to K(\pi,1)$ that induces the identity on fundamental groups. In this setting, we define the  \emph{$w_2$-type} of $M$ to be the pair $(\pi,w_M)$. If $s_M$ is odd,  $M$ has type~(I) and we define the $w_2$-type to be the pair $(\pi_1(M),\textup{(I)})$.

 An \emph{isomorphism} between the $w_2$-types of $M$ and $N$ is an isomorphism $\pi_1(M) \cong \pi_1(N)$ that carries $w_M$ to $w_N$. In the odd case, the condition on $w_M$ is interpreted to mean that both $M$ and $N$ have type~(I). The $w_2$-type is a refinement of the type as soon as the action of $\Aut(\pi)$ on $H^2(\pi;\cy 2)$ has more than two orbits. The zero element determines a preferred orbit, corresponding to type~(II) above. The remaining orbits refine type~(III).
 
\begin{thmc} For closed oriented $4$-manifolds with geometrically $2$-dimensional fundamental groups satisfying properties \textup{(W-AA)}, and given Kirby-Siebenmann invariant, any isometry between equivariant intersection forms inducing an isomorphism of $w_2$-types can be realized by an $s$-cobordism. 
 \end{thmc}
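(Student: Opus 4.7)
My plan is to prove Theorem~C using Kreck's modified surgery approach: first construct a normal bordism between $M$ and $M'$ that is compatible with the given isometry $(\alpha,\beta)$ of equivariant intersection forms, then improve it to an $s$-cobordism using the (W-AA) hypotheses. The first step is to identify the normal 1-type. Since $\gdim\pi\leq 2$, any reference map $c\colon M\to K(\pi,1)$ is a 2-equivalence, and the $w_2$-type $(\pi,w_M)$ determines a fibration $\xi\colon B\to B\textup{STOP}$ whose 2-skeleton is $K(\pi,1)$, over which $M$ admits a normal 1-smoothing $\bar\nu_M$. The isomorphism of $w_2$-types guarantees that $\xi$ can be taken the same for $M$ and $M'$, and the isometry $\beta$ can be used (after correcting by $\alpha$) to match $\bar\nu_M$ and $\bar\nu_{M'}$ on 2-skeleta.

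Next I would compute the relevant $\xi$-bordism class. Because $K(\pi,1)$ has cohomological dimension $\leq 2$, the Atiyah--Hirzebruch spectral sequence for $\Omega_4^{\textup{STOP}}(B;\xi)$ collapses in the range that matters, so $[M,\bar\nu_M]$ is pinned down by the Kirby--Siebenmann invariant together with the signature (and, in type~\textup{(III)}, a codimension-2 Arf correction built from the intersection form as in Corollary~\ref{cor:Arf}). The KS invariants agree by hypothesis, and Remark~\ref{signature_remark} gives $\sign(M)=\sign(M')$ from the isometry $\beta$. Hence $[M,\bar\nu_M]=[M',\bar\nu_{M'}]$, yielding a normal bordism $(W,\bar\nu_W)$. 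Surgery below the middle dimension, available because $\gdim\pi\leq 2$, turns $W$ into a 2-equivalence relative to its boundary, putting it into Kreck's middle-dimensional framework.

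The core step is then to eliminate the middle-dimensional surgery obstruction $\theta(W)$ living in Kreck's monoid $l_5(\bbZ\pi)$, which fits into an exact sequence relating it to $L_5(\bbZ\pi)$ and a 4-dimensional bordism term. Surjectivity of $A_5$ in (W-AA)(ii) says every class in $L_5(\bbZ\pi)$ is realized by a closed 5-manifold mapping to $K(\pi,1)$, so an interior connected sum of $W$ with a suitable realizer can alter $\theta(W)$ to zero; injectivity of $A_4$ in (W-AA)(iii) ensures this alteration does not spoil the prescribed isometry $\beta$, since the ambiguity in 4-dimensional bordism class is then controlled by $H_4(\pi;\bbL_0)\hookrightarrow L_4(\bbZ\pi)$. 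Once $\theta(W)=0$, Kreck's theorem produces an $h$-cobordism realizing $(\alpha,\beta)$, and (W-AA)(i), $\wh(\pi)=0$, upgrades it to the desired $s$-cobordism.

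The main obstacle I expect is this last step: showing that the assembly-map hypotheses suffice to produce an $s$-cobordism \emph{realizing the given isometry}, not just some $s$-cobordism. This requires tracking how $\theta(W)$ transforms under change of normal 1-smoothing and under connected sum with assembly realizers, and verifying compatibility with the action of $H_4(\pi;\bbL_0)$ on bordism classes. The injectivity of $A_4$ is precisely the ingredient that makes this tracking rigid, and checking the compatibility carefully is where most of the work will lie.
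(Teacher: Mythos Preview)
Your proposal has a genuine gap at the ``core step.'' You work with the normal $1$-type $B(\pi,w)$, where $\pi_2(B)$ is trivial (the fibre over $BSTOP$ is $BTOPSPIN$, which is $3$-connected). In Kreck's framework the obstruction $\theta(W)\in l_5(\bbZ\pi)$ is represented by a half-rank summand $V$ in a sum of hyperbolic planes, and there is an isometry from $V$ to $\ker\bigl(\pi_2(M)\to\pi_2(B)\bigr)$. With the normal $1$-type this kernel is \emph{all} of $\pi_2(M)$, carrying the full intersection form $s_M$, which is not zero. Hence $\theta(W)$ need not lie in the subgroup $L_5(\bbZ\pi)\subset l_5(\bbZ\pi)$, and connected sum with closed $5$-manifolds realizing $L_5$-classes cannot move it to zero: adding an element of $L_5$ to something outside $L_5$ never gives the identity. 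This is exactly why the normal $1$-type yields only the \emph{stable} classification (Section~\ref{sec:stable}), not an $s$-cobordism.

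The paper repairs this by introducing the \emph{reduced normal $2$-type} $B(P,w)$, with $\pi_2(P)=\pi_2(M)^\dag=\pi_2(M)/R(s_M)$. Now the kernel is the radical $R(s_M)$, on which the form vanishes by definition, so $\theta(W)\in L_5(\bbZ\pi)$ and surjectivity of $A_5$ suffices to kill it (Theorem~\ref{thm:reduced}). The price is a harder bordism computation: $\Omega_4(B(P,w))$ involves $H_4(P;\bbZ)\cong\Gamma(\pi_2(M)^\dag)_\pi$, and one must show (Theorem~\ref{thm:fundamentalclass}) that the fundamental class $c_*[M]\in H_4(P;\bbZ)$ is determined by the reduced intersection form. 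The injectivity of $A_4$ is then used, via Lemma~\ref{lem:theta_relation}, to control the remaining $H_2(\pi;\cy 2)$ term in the bordism group --- not, as you suggest, to preserve the isometry during the $L_5$-modification. Your intuition that $A_4$-injectivity makes something ``rigid'' is right, but it acts on the bordism side, not on the surgery side.
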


\begin{remark}
It is  important to point out that whenever the Farrell-Jones isomorphism conjectures \cite{farrell-jones1} in algebraic $K$-theory and $L$-theory hold for a group $\pi$ with $\gdim\pi\leq 2$, then $\pi$ satisfies (W-AA), and in fact the assembly maps are isomorphisms. We refer to \cite{lueck-reich1} for a survey of results on these conjectures.
\end{remark}

\begin{remark}
A central tool in our classification is the {\em reduced intersection form} $\sMdag$ on the quotient  $\pi_2(M)^\dag:= \pi_2(M)/R(s_M)$ by the radical of $s_M$. This is a  finitely generated, stably-free $\Zpi$-module and $\sMdag$ is non-singular,  by Corollary \ref{cor:free}. Since  $s_{M_1}\cong s_{M_2}$ if and only if  $\sMdagg{M_1}\cong \sMdagg{M_2}$,   it follows that Theorems~A and C could be formulated with the reduced intersection form $\sMdag$ replacing the intersection form $s_M$  in the statements.
\end{remark}

Section~\ref{sec:strategy}  contains a summary of our classification technique as guide for the paper, and the proof of Theorem~C, modulo the computation of certain bordism groups.  Section~\ref{sec:minimal} gives some basic constructions and facts concerning $4$-manifolds, and Section~\ref{sec:stable} discusses stable classification of $4$-manifolds. We prove the first statement of Theorem~B in Corollary~\ref{cor:rad} and Corollary~\ref{cor:free},  just using stable classification of $4$-manifolds with geometrically 2-dimensional fundamental group. In Section~\ref{sec:Bbordism} we complete the bordism calculations and the proof of Theorem C.  In Section~\ref{sec:BS} we show that the solvable Baumslag-Solitar groups satisfy conditions (W-AA), deduce Theorem A from Theorem C, and prove the remaining parts of Theorem~B.

\medskip
\noindent {\bf Acknowledgement}: We wish to thank Wolfgang L\"uck for helpful conversations.

\section{The strategy: Reduced normal $2$-types and surgery} \label{sec:strategy}

In this section we will explain our strategy and reduce the proof of Theorems~A and C to the computation of  certain bordism groups, which we will analyze in Section~\ref{sec:Bbordism}. 
The classical surgery approach to classifying manifolds \cite{wall-book} would need as an input a homotopy equivalence  $h\colon N\to M$. Then one would ask whether $h$ is normally cobordant, over $M$, to a self-homotopy equivalence of $M$, and finally one would try to do surgery on such a normal cobordism until it becomes an $s$-cobordism. This last step can always be completed if the fundamental group $\pi$ in question satisfies the following subset of the properties (W-AA) in Definition~\ref{def:WAA}
\begin{equation} \tag{W-A}
\wh(\pi) = 0 \text{ and the assembly map } A_5\colon H_5(\pi;\bbL_0) \to L_5(\Zpi) \text{ is surjective.}
\end{equation}
Note that the vanishing of the Whitehead group $\wh(\pi)$ implies that one can suppress the decorations in the $L$-groups above. We'll refer to these properties of $\pi$ as `properties (W-A)' in the following. 

The other steps in this classical surgery approach depend on more than the fundamental group, for example the normal cobordism computation involves all homology groups of $M$.
In many instances, even the homotopy classification is too hard to understand.

In \cite{kreck3}, the second author developed a modified surgery approach to classification. In the case of $4$-manifolds, he starts with the {\em normal $2$-type} $B\to BSTOP$ of $M$. This is the second stage of the Moore-Postnikov factorization of  the classifying map $\nu_M\colon M \to BSTOP$ of the normal bundle (see \cite{kister2}, \cite{milnor3}, and \cite{rourke-sanderson1} for the construction of this classifying map). The map from $BSO$ to $BSTOP$ is a $3$-equivalence 
\cite[p.~300]{kirby-siebenmann1}, so $H^2(BSTOP;\mathbb Z/2) \cong \mathbb Z/2$. We call the non-trivial element $w_2$ and define 
\eqncount
\begin{equation}\label{w2def}
w_2(M) := \nu_M^*(w_2) \in H^2(M;\mathbb Z/2)
\end{equation}
The Moore-Postnikov factorization is a 3-coconnected fibration admitting a lift $\tilde \nu_M$ of $\nu_M$ which is a $3-$equivalence:
$$\xymatrix@!C
{& B\ar[d] \\ 
M\ar@{-->}[ur]^{\tilde\nu_M}\ar[r]^{\nu_M} & BSTOP
}$$
So the homotopy groups of the fibre of $B\to BSTOP$ vanish in degrees~$\ge 3$ and the {\em normal 2-smoothing} $\tilde\nu_M$ induces an isomorphism of homotopy groups in degrees~$\leq 2$. 

Given any fibration $B\to BSTOP$, one can define the normal  $B$-bordism groups $\Omega_4(B)$ of triangles as above \cite{stong1}, except that one does not require any homotopy theoretic conditions on the lift of $\nu_M$ to $B$. 

\begin{theorem}[\cite{kreck3}] If two closed $4$-manifolds admit  $B$-bordant normal 2-smoothings in the same $3$-coconnected fibration $B\to BSTOP$ then they are s-cobordant, provided their fundamental group satisfies properties~\textup{(W-A)} above.
\end{theorem}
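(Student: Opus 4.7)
The plan is to start with a $B$-bordism $W^5$ between $(M_0,\tilde\nu_{M_0})$ and $(M_1,\tilde\nu_{M_1})$ and modify it by successive surgeries until it becomes an $s$-cobordism, following Kreck's modified surgery program. The argument splits into surgeries below the middle dimension, a single obstruction in $L_5(\Zpi)$ measured in the middle dimension, and surgeries above the middle dimension.

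First, I would perform surgery on the interior of $W$ in dimensions $\le 2$ to arrange that the reference map $W\to B$ is a $3$-equivalence. The hypotheses on the normal $2$-smoothings are tailored for this step: since each $\tilde\nu_{M_i}\colon M_i\to B$ is a $3$-equivalence and the fibration $B\to BSTOP$ is $3$-coconnected, every class in $\pi_k(W,M_0)$ for $k\le 2$ is represented by an embedded sphere whose stable normal bundle is trivialized by the $B$-structure, so surgery on such a sphere is available and compatible with the $B$-lift. After finitely many such moves we replace $W$ by $W'$ in which $M_0\hookrightarrow W'$ is a $2$-equivalence and $W'\to B$ is a $3$-equivalence.

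Next, I would isolate the middle-dimensional obstruction. Attempting to continue with surgeries on framed $2$-spheres representing a $\Zpi$-basis of $H_3(W',M_0;\Zpi)$ produces an equivariant intersection form equipped with a quadratic refinement on a free $\Zpi$-module, and Kreck's construction packages this data into a well-defined element
\[
\theta(W')\in L_5(\Zpi)
\]
(undecorated $L$-groups because $\wh(\pi)=0$), whose vanishing is equivalent to the existence of a further surgery to an $h$-cobordism. Checking that $\theta(W')$ is independent of the intermediate surgery choices and additive under concatenation of $B$-bordisms is the key technical point; this is where the argument genuinely diverges from classical Wall surgery, because no target homotopy equivalence is fixed against which the obstruction could be measured.

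Finally, I would cancel $\theta(W')$ using hypothesis~(W-A). Surjectivity of $A_5\colon H_5(\pi;\bbL_0)\to L_5(\Zpi)$ guarantees that every class in $L_5(\Zpi)$ is realized by a normal $B$-bordism $V$ from $M_0\times I$ to itself, and stacking such a $V$ (with class $-\theta(W')$) onto $W'$ along $M_0$ yields a bordism with trivial obstruction. The remaining middle-dimensional surgeries on framed $2$-spheres in the $5$-manifold $W'\cup V$ can then be carried out by standard PL/TOP techniques (the spheres have codimension~$3$, so no Freedman machinery is needed) to produce an $h$-cobordism between $M_0$ and $M_1$. Vanishing of $\wh(\pi)$ upgrades this $h$-cobordism to an $s$-cobordism, finishing the proof. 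The main obstacle is the rigorous construction of $\theta(W')$ and the verification of its additivity under stacking, which is the technical heart of the modified surgery framework.
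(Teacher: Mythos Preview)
The paper does not supply its own proof of this statement; it is cited directly from \cite{kreck3}. The argument you sketch is nonetheless the correct one and parallels the proof the paper does give for the subsequent Theorem~\ref{thm:reduced} (the reduced version): do surgery below the middle dimension on the $B$-bordism $W$, extract Kreck's obstruction, kill it using surjectivity of $A_5$ via a stacking argument, and invoke $\wh(\pi)=0$ to pass from $h$- to $s$-cobordism.

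The one substantive point you elide is that Kreck's obstruction $\Theta(W,c)$ lands a~priori only in the \emph{monoid} $l_5(\Zpi)$, represented by a half-rank direct summand $V$ in a sum of hyperbolic planes; by \cite[Thm.~4]{kreck3} the bordism can be surgered to an $s$-cobordism exactly when this element is elementary. The obstruction lies in the subgroup $L_5(\Zpi)\subset l_5(\Zpi)$ only once one knows that the induced quadratic form on $V$ vanishes, and that is precisely where the normal $2$-smoothing hypothesis enters: by \cite[Prop.~8(i)]{kreck3} the form on $V$ is isometric to the form on $\ker\bigl(\pi_2(M_0)\to\pi_2(B)\bigr)$, which is zero since $\tilde\nu_{M_0}$ is a $3$-equivalence. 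So the technical heart is not, as you put it, the well-definedness or additivity of $\theta$, but rather the passage from $l_5$ to $L_5$. Once in $L_5(\Zpi)$, your stacking argument is exactly what the paper spells out (glue on a degree-one normal map over $M\times I$ realizing $-\Theta(W,c)$, with additivity of the obstruction being classical \cite[\S 6]{wall-book}). Two small slips: after surgery below the middle dimension the map $c\colon W\to B$ is only a $2$-equivalence, not a $3$-equivalence; and ``framed $2$-spheres representing a $\Zpi$-basis of $H_3(W',M_0;\Zpi)$'' mixes dimensions --- the odd-dimensional obstruction is encoded as a formation, not a form arising from such spheres.
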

\begin{remark}\label{rem:iso}
Note that two 4-manifolds that admit normal 2-smoothings in the same 3-coconnected fibration $B\to BSTOP$ must have isomorphic homotopy groups in degrees~$\leq 2$. More precisely, given normal 2-smoothings $\tilde\nu_M$, $\tilde\nu_N$, the induced maps on $\pi_1$ and $\pi_2$ are isomorphisms and by composing one such map with the inverse of the other one obtains particular isomorphisms $\pi_i(M) \cong \pi_i(N)$ for $i=1,2$. The main idea of \cite{kreck3} was that Poincar\'e duality would also force isomorphisms for $i>2$. This follows from showing that $M$ and $N$ are  $s$-cobordant, and hence simple homotopy equivalent.
\end{remark}
Even though the above theorem avoids an \emph{a priori} homotopy classification, the computation of the $B$-bordism groups can still be formidable. During the current collaboration, we realized that one can use the results of \cite{kreck3} to reduce the $s$-cobordism classification to an easier bordism question.

We will first recall the definition of the equivariant intersection form $s_M$ mentioned in the Introduction.
Let $M$ be an oriented $4$-manifold with universal covering $\widetilde M$. By Poincar\'e duality there is an isomorphism
$H^i_c(\widetilde M;\mathbb Z) \cong H_{4-i}(\widetilde M;\mathbb Z)$, from cohomology with compact support to homology. In particular, we obtain an isomorphism $$\varepsilon_0\colon H^4_c(\widetilde M;\mathbb Z) \to H_0(\widetilde M;\mathbb Z) =\mathbb Z.$$
 We identify $\pi_2(M)$ with $H_2(\widetilde M;\mathbb Z)$, by the Hurewicz Theorem, and then via Poincar\'e duality with $H^2_c(\widetilde M;\mathbb Z$). The cup product on $H^2_c(\widetilde M;\mathbb Z)$ is a $\pi_1(M)$-invariant bilinear form with values in $\mathbb Z$, and thus we obtain a $\Lambda$-hermitian form 
\eqncount
\begin{equation}\label{sMdef}
s_M(x,y) := \sum _{g\in G} \varepsilon_0(g^{-1}\tilde x \cup \tilde y)\cdot g\  \in \ \Zpi = \La,
\end{equation}
where $\tilde x,\tilde y\in H^2_c(\widetilde M;\mathbb Z)$ are the images of $x,y\in \pi_2(M)$ under the isomorphisms above.

\begin{definition}\label{def:reduced normal}
The {\em reduced normal $2$-type} of a $4$-manifold $M$ is a 3-coconnected fibration $B\to BSTOP$, admitting a lift of the classifying map $\nu_M$ of the normal bundle,  which is an isomorphism on $\pi_1$ and on $\pi_2$  is surjective with kernel the radical $R(s_M)$ of the intersection form $s_M$.  We call such a lift a {\em reduced normal $2$-smoothing} and note that it induces the epimorphism $\pi_2(M) \to \pi_2(B) \cong \pi_2(M)^\dag = \pi_2(M)/R(s_M)$. 
\end{definition}

\begin{theorem} \label{thm:reduced}
If two closed $4$-manifolds admit $B$-bordant {\em reduced} normal 2-smoothings in the same 3-coconnected fibration $B\to BSTOP$ then they are s-cobordant, provided their fundamental group satisfies properties~\textup{(W-A)} above. More precisely, the $s$-cobordism induces the same isomorphisms on $\pi_1$ and $(\pi_2)^\dag$ as the given reduced normal 2-smoothings.
\end{theorem}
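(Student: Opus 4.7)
My strategy is to derive Theorem~\ref{thm:reduced} from the Kreck $s$-cobordism theorem quoted just above by upgrading the reduced normal $2$-smoothings to honest normal $2$-smoothings over a refined $3$-coconnected fibration $B'\to BSTOP$, after modifying the given bordism by interior surgery. The five-dimensional nature of the bordism $W$ provides enough room for general position and elementary handle manipulations to represent the relevant homotopy classes by framed embedded submanifolds.

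Given a $B$-bordism $(W^5,\tilde\nu_W)$ between reduced normal $2$-smoothings $\tilde\nu_M$ and $\tilde\nu_N$, the first step is the standard Kreck reduction: by $0$- and $1$-surgeries in the interior of $W$ (which preserve $\partial W$ and use the $B$-structure for framings), arrange that $\tilde\nu_W$ is a $2$-equivalence, identifying $\pi_1(W)$ with $\pi$ and making both inclusions $M,N\hookrightarrow W$ induce $\pi_1$-isomorphisms. Surjectivity of $\pi_2(W)\to\pi_2(B)=\pi_2(M)^\dag$ then follows from surjectivity of $\pi_2(M)\to\pi_2(B)$.

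The heart of the proof is to refine $B$ to the full normal $2$-type $B'\to BSTOP$ and lift the bordism there. Because the radical $R(s_M)\cong H^2(\pi;\Zpi)$ depends only on $\pi$ by Corollary~\ref{cor:rad}, the same refinement serves both boundary components: $B'\to B$ is (up to twist) a principal $K(R(s_M),2)$-fibration, and both $\tilde\nu_M$ and $\tilde\nu_N$ lift to genuine normal $2$-smoothings into $B'$. The task is to show that $W$, after further $2$-surgeries in the interior, admits a lift $W\to B'$ extending these boundary lifts. The obstruction to such a lift is a cohomology class of $W$ with coefficients in $R(s_M)$; by duality it is pairing with classes in $\ker(\pi_2(W)\to\pi_2(B))$ beyond those coming from the boundary radicals. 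Such classes are represented by embedded framed $2$-spheres in the interior of $W$ (general position in a $5$-manifold gives the embeddings; the $B$-structure restricts to give the framings), and surgery on them kills the obstruction without altering $\partial W$.

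Once $W$ is a bordism between honest normal $2$-smoothings over $B'$, Kreck's original theorem applies directly: the residual surgery obstruction lies in $L_5(\Zpi)$, and properties~(W-A) --- surjectivity of the assembly map $A_5\colon H_5(\pi;\bbL_0)\to L_5(\Zpi)$ together with $\wh(\pi)=0$ (so that the decoration in $L_5$ is irrelevant) --- permit it to be realized and cancelled by a further interior modification, yielding the claimed $s$-cobordism. Naturality of the construction guarantees that the induced isomorphisms on $\pi_1$ and $\pi_2^\dag$ coincide with those prescribed by the reduced $2$-smoothings. I expect the main obstacle to lie in the lifting step: one must verify that the lifting obstructions to $B'$ are exactly the classes killable by interior $2$-surgeries, and that the $2$-surgeries can be chosen coherently with the boundary data so that Kreck's theorem applies as a black box to the resulting $B'$-bordism.
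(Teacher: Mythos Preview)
Your approach is genuinely different from the paper's, and the lifting step contains a real gap.

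The paper does \emph{not} try to promote the reduced normal $2$-type $B$ to the full normal $2$-type $B'$.  Instead it applies Kreck's modified surgery theory \cite[Thm.~4]{kreck3} directly with the map $c\colon W\to B$.  After surgery below the middle dimension this yields an obstruction $\Theta(W,c)$ in the \emph{monoid} $l_5(\Zpi)$, represented by a half-rank summand $V$ of a sum of hyperbolic planes.  The decisive observation is that, by \cite[Prop.~8(i)]{kreck3}, $V$ is isometric to $\ker(\pi_2(M)\to\pi_2(B))=R(s_M)$, on which the intersection form and hence the quadratic refinement vanish identically.  Consequently $\Theta(W,c)$ already lies in the ordinary surgery group $L_5(\Zpi)\subset l_5(\Zpi)$, and the (W-A) hypotheses allow it to be killed by connect-summing $W$ with a closed $5$-manifold carrying a suitable reference map.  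No passage to $B'$ is needed, and no analysis of lifting obstructions is required.

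Your proposal, by contrast, presupposes that $M$ and $N$ share a common honest normal $2$-type $B'$.  This is not given: what is assumed is only an isomorphism $\pi_2(M)^\dag\cong\pi_2(N)^\dag$, and the extensions $0\to R(\pi)\to\pi_2(M)\to\pi_2(M)^\dag\to 0$ (together with the $k$-invariants in $H^3(\pi;\pi_2)$) need not agree a priori.  Indeed, as Remark~\ref{rem:iso} indicates, the agreement of the full $\pi_2$ is part of the \emph{conclusion}, obtained only after the $s$-cobordism is constructed.  Even granting a common $B'$, your treatment of the relative lifting obstruction is incomplete: the obstruction lives in $H^3(W,\partial W;R(s_M))$, and the claim that it is dual to classes representable by framed embedded $2$-spheres in $\ker(\pi_2(W)\to\pi_2(B))$, and that surgering those spheres kills it compatibly with the boundary lifts and the $B$-structure, is exactly the hard step you flagged but did not carry out.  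The paper's route via $l_5$ and the vanishing of the form on the radical sidesteps all of this.
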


\begin{proof}
Let $W$ be a normal $B$-bordism  between two reduced normal $2$-smoothings. We want to replace $W$ by an $s$-cobordism. Since the Whitehead group $\wh(\pi)$ is trivial we may ignore bases and look for an $h$-cobordism instead. We are now in the situation studied in \cite[Thm.4]{kreck3}.
 We may replace $W$ by surgeries below the middle dimension by a $2$-equivalence $c\colon W \to B$. Then the surgery obstruction $\Theta (W,c)$ in the obstruction monoid $l_5(\bbZ\pi)$ is defined. This obstruction is given by a half rank direct summand $V$ in a direct sum of hyperbolic planes over the group ring  $\Zpi$. There is an isometry from $V$ to the kernel of the map $\pi_2(M_1) \to \pi_2(B)$ \cite[Prop.8, i)]{kreck3}, which in our situation  is equal to the radical $R(s_M)$. Thus the quadratic form vanishes on $V$ and therefore $\Theta (W,c)$ is contained in the classical surgery group $L_5(\Zpi)$. Using our surjectivity assumption on the assembly map $A_5$,  we will show below that this element can be assumed zero in $L_5(\Zpi)$, by connected sum of the original $B$-bordism $W$ with a closed $5$-manifold, equipped with a suitable reference maps to $B$. Thus we are finished by applying \cite[Thm.4]{kreck3}.
 
The last step in the above argument can be explained in more detail, as follows.
The assembly map is a map
$$
\cN (M\times I,  M \times \{0,1\}) \to L_5(\Zpi),
$$
where $\cN (M\times I,  M \times \{0,1\})$ is the set of degree 1 normal maps
$
(T,H, \alpha)
$
where $\partial T = M + (-M)$, $H\colon T \to M\times I$ is a degree 1 map which is the identity on both boundary components, and $\alpha$ is a  stable framing of $\nu_T - H^*\nu_M$. Given $c\colon W \to B$ as in the proof above, we can form the sum of $W$ with $(T,H, \alpha)$ by glueing along a boundary component $M$. The new reference map is obtained by composition of $H$ with $c$. Up to bordism this sum  is just the connected sum of $W$  with the closed manifold obtained by identifying the two boundary components of $T$.

Finally we note that if we glue $T$ to $W$ along $M$, the surgery obstruction is the sum of the two surgery obstructions of $W$ and $T$. This is an immediate consequence of the construction of the surgery obstruction (see \cite[\S 6]{wall-book}).
Thus,  if we can realize every element of $L_5(\Zpi)$ by an appropriate $(T,H,\alpha)$, then by gluing $T$ to $W$ we can ensure  that the surgery obstruction vanishes. 
\end{proof}

To apply the above theorem, we need to understand when two $4$-manifolds admit reduced normal 2-smoothings into the same 3-coconnected fibration $B\to BSTOP$. For this purpose, we first describe $B$ in terms of a given $4$-manifold $M$. 

Let $P = P(M)$ denote a two-stage Postnikov system with  $\pi_1(P) = \pi_1(M)=\pi$, and  
$$
\pi_2(P) = \pi_2(M)^\dag = \pi_2(M) / R(s_M).
$$
Up to homotopy, $P$ is determined by these two homotopy groups and the $k$-invariant in $H^3(\pi; \pi_2(P))$. If $\gdim(\pi)\le 2$ this $k$-invariant is automatically zero. Thus $P$ is determined by $\pi_1(M)$ and $\pi_2(M) ^\dag$ alone in this case.

\begin{definition}\label{def:reduced} A map $c\colon M \to P$ is called a \emph{reduced  $3$-equivalence} if $c$ induces an isomorphism $\pi_1(M) \cong \pi_1(P)$, and an isomorphism $\pi_2(M)^\dag \cong \pi_2(P)$.
\end{definition}
Notice that a  reduced  $3$-equivalence is always a $2$-equivalence and it is a $3$-equivalence if and only if $R(s_M) = 0$. We will show in  Corollary~\ref{cor:rad}, that the radical $R(s_M) \cong H^2(\pi;\Zpi)$ only depends on $\pi$ and for most groups currently under consideration, this module is non-trivial.

\begin{lemma} \label{lem:w}
Let $c\colon M \to P$ be a  reduced  $3$-equivalence. Then there is a unique class $w\in H^2(P;\cy 2)$ such that $c^*(w)= w_2(M)$.  
\end{lemma}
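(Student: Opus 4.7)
The plan is to extract $w$ by obstruction theory, realized concretely through the Serre spectral sequence of the homotopy fibre of $c$.

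First, I would replace $c$ by a fibration and let $F$ denote its homotopy fibre. Since $c$ is a reduced $3$-equivalence, the long exact sequence of homotopy groups gives that $F$ is simply connected with $\pi_2(F) \cong R(s_M)$; Hurewicz plus universal coefficients then yield $H^1(F;\cy 2)=0$ and $H^2(F;\cy 2) \cong \Hom(R(s_M),\cy 2)$. Feeding this into the Serre spectral sequence of $F \to M \to P$ with $\cy 2$-coefficients, the vanishing $E_2^{1,1}=0$ together with the usual degree count for differentials at $E_r^{2,0}$ leaves the short exact sequence
\[
0 \longrightarrow H^2(P;\cy 2) \xrightarrow{\,c^*\,} H^2(M;\cy 2) \longrightarrow H^2(F;\cy 2),
\]
with the right-hand arrow the restriction to $F$. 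Injectivity of $c^*$ here gives the uniqueness of $w$ immediately.

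For existence, I would reduce the problem to showing that $w_2(M)$ restricts trivially to $F$. Under the identification $H^2(F;\cy 2) \cong \Hom(R(s_M),\cy 2)$, this means verifying that $\langle w_2(M),\phi_*(x)\rangle = 0$ for every $x \in R(s_M) \cong \pi_2(F)$, where $\phi\colon F \to M$ is the fibre inclusion.

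The main obstacle---really the only content-carrying step---is verifying that pairing. I would invoke Wu's formula on the closed oriented $4$-manifold $M$: for $\alpha\in H^2(M;\cy 2)$ one has $\alpha\cup\alpha = w_2(M)\cup \alpha$, so pairing with the fundamental class gives $\langle w_2(M),y\rangle = (y\cdot y)_M \pmod 2$ for every $y \in H_2(M;\cy 2)$. When $y$ is the Hurewicz image of $x\in \pi_2(M)$, the self-intersection $(y\cdot y)_M$ is the image of $s_M(x,x)$ under the augmentation $\Zpi \to \bbZ$, and this vanishes whenever $x\in R(s_M)$. Everything else in the argument is a formal consequence of the reduced $3$-equivalence hypothesis.
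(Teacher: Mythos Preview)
Your argument is correct. The paper's proof reaches the same conclusion by a closely related but differently organized route: instead of the Serre spectral sequence of the homotopy fibre $F\to M\to P$, it compares the five-term sequences
\[
0 \to H^2(\pi;\cy 2) \to H^2(-;\cy 2) \to H^2(\widetilde{\,-\,};\cy 2)^\pi \to H^3(\pi;\cy 2)
\]
for $M$ and $P$ coming from their universal coverings, and then uses that $w_2(\wM)$ vanishes on $R(s_M)$ because the \emph{identity coefficient} of $s_M(x,x)$ is zero there (this is the content of Remark~\ref{wtwo_lift}). You instead show that $w_2(M)$ vanishes on the Hurewicz image of $R(s_M)$ in $H_2(M;\cy 2)$ because the \emph{augmentation} of $s_M(x,x)$ is zero; both vanish for $x\in R(s_M)$, so the geometric input is the same. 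Your fibration approach is self-contained and avoids the extra diagram chase the paper performs to pass from the statement on $\wM$ back down to $M$; the paper's approach has the mild advantage that the local coefficient system in the spectral sequence is visibly trivial on the row that matters. Either way, the substantive step is exactly the Wu-formula computation you isolate.
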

\begin{proof}  We have a commutative diagram
$$\xymatrix{0 \ar[r] & H^2(\pi;\cy 2)\ar[r]\ar@{=}[d]& H^2(P;\cy 2) \ar[r]\ar[d]^{c^*} & H^2(\widetilde P, \cy 2)^\pi\ar[d]^{\tilde c^*}\ar[r] &H^3(\pi;\cy 2) \ar@{=}[d]\\
0 \ar[r] & H^2(\pi;\cy 2)\ar[r]& H^2(M;\cy 2) \ar[r]& H^2(\wM; \cy 2)^\pi\ar[r]&H^3(\pi;\cy 2)
}$$
Since $H_2(\wM;\cy 2) \to H_2(\widetilde P;\cy 2)$ is surjective, both vertical maps $\tilde c^*$ and $c^*$ are injective, showing uniqueness of $w$. For existence, recall from Remark~\ref{wtwo_lift} that the class $w_2(\wM)$ vanishes on the radical $R(s_M)$ of $s_M$. Hence it lies in the image of $\tilde c^*$ and a diagram chase shows that this forces $w_2(M)$ to be in the image of $c^*$.
\end{proof}
\begin{definition}\label{def:wPpair}
 The pair $(P,w)$ is called the  \emph{reduced $w_2$-type} of $M$, where $P = P(M)$ is defined above, and $w\in H^2(P;\cy 2)$ is the class constructed in
Lemma~\textup{\ref{lem:w}}. 
\end{definition}

\begin{lemma}\label{lem:reduced}
Let $(P,w)$ be the reduced $w_2$-type of $M$. Then $M$ admits a reduced normal $2$-smoothing into the homotopy pullback along $w$:
$$\xymatrix@!C
{BTOPSPIN\ar[r]^i\ar@{=}[d]&
B(P,w)\ar[r]^j\ar[d]^{\xi} &
P\ar[d]^{w}\\ 
BTOPSPIN\ar[r]^i & BSTOP\ar[r]^{w_2} & \ K(\cy 2,2)
}$$
where $w_2\colon BSTOP \to K(\cy 2, 2)$ denotes the universal second Stiefel-Whitney class. In particular, the map $\xi\colon B(P,w)\to BSTOP$ is a 3-coconnected fibration.
\end{lemma}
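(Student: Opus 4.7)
The plan has two parts: first establish that $\xi\colon B(P,w)\to BSTOP$ is a $3$-coconnected fibration, and then construct the reduced normal $2$-smoothing $\tilde\nu_M$. Since $B(P,w)$ is defined as the homotopy pullback of $w$ along $w_2$, the homotopy fiber of $\xi$ is naturally identified with the homotopy fiber $F$ of $w\colon P\to K(\cy 2, 2)$, while the homotopy fiber of $j$ is identified with the homotopy fiber of $w_2$, which is $BTOPSPIN$ by definition; this explains the left square of the displayed diagram. The $3$-coconnectedness of $\xi$ then follows from the long exact sequence of $F\to P\to K(\cy 2,2)$: since $P$ is a two-stage Postnikov system we have $\pi_i(P)=0$ for $i\geq 3$, and $\pi_i(K(\cy 2, 2))=0$ for $i\neq 2$, forcing $\pi_i(F)=0$ for $i\geq 3$.

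For the lift, the key observation is that the two maps $w\circ c$ and $w_2\circ \nu_M$ from $M$ to $K(\cy 2,2)$ both classify the same cohomology class $w_2(M)\in H^2(M;\cy 2)$ --- the first by Lemma \ref{lem:w} and the second by the definition of $w_2(M)$. Since $K(\cy 2, 2)$ is an Eilenberg--MacLane space, these two classifying maps are homotopic. Choosing such a homotopy and applying the universal property of the homotopy pullback produces a map $\tilde\nu_M\colon M\to B(P,w)$ with $\xi\circ\tilde\nu_M\simeq \nu_M$ and $j\circ\tilde\nu_M\simeq c$; replacing $\tilde\nu_M$ by a strict lift of $\nu_M$ using the homotopy lifting property for $\xi$ is harmless.

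To finish, I verify that $\tilde\nu_M$ satisfies the conditions of Definition \ref{def:reduced normal}. Since $BTOPSPIN$ is $2$-connected, the fibration $BTOPSPIN\to B(P,w)\to P$ induces isomorphisms $j_*\colon \pi_i(B(P,w))\xrightarrow{\cong}\pi_i(P)$ for $i\leq 2$ via the long exact sequence. Consequently on $\pi_1$ the map $(\tilde\nu_M)_*$ coincides with $c_*$ and is an isomorphism, while on $\pi_2$ it is identified with $c_*\colon \pi_2(M)\twoheadrightarrow \pi_2(P)=\pi_2(M)^\dag$, which is surjective with kernel $R(s_M)$ by the definition of a reduced $3$-equivalence. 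The only real subtlety is invoking the universal property of the homotopy pullback correctly, and that is straightforward once Lemma \ref{lem:w} supplies the needed homotopy; I do not anticipate any serious obstacle in this argument.
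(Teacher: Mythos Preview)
Your argument is correct and follows essentially the same route as the paper's proof: identify the fibre of $\xi$ with the fibre of $w$ to get $3$-coconnectedness, use Lemma~\ref{lem:w} to produce the homotopy between $w_2\circ\nu_M$ and $w\circ c$ needed for the pullback lift, and then invoke the connectivity of $BTOPSPIN$ to see that any such lift is automatically a reduced normal $2$-smoothing. The only cosmetic difference is that the paper records $BTOPSPIN$ as $3$-connected rather than merely $2$-connected, but your weaker hypothesis already suffices for the $\pi_1$ and $\pi_2$ statements you need.
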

\begin{proof}
Since we use the homotopy pullback, the map $\xi$ is a fibration as required. Its fibre is homotopy equivalent to the fibre of $w$ which clearly has no homotopy groups in degrees~$\geq 3$. Starting with any reduced 3-equivalence $c:M\to P$, we see that a lift of $\nu_M$ to $B(P,w)$ is the same as a homotopy between the two maps $w_2\circ \nu_M$ and $w\circ c$ from $M$ to $K(\cy 2,2)$. By Lemma~\ref{lem:w} such a homotopy exists by our choice of $w$. Note that different choices of this homotopy correspond to different lifts of $\nu_M$. However, since $BTOPSPIN$, the fibre of the map $w_2$, is 3-connected, it is clear that any lift of $\nu_M$ constructed in this way is a reduced normal 2-smoothing.
\end{proof}

  The fibration $B(P,w) \to BSTOP$ constructed in Lemma \ref{lem:reduced} is the \emph{reduced normal $2$-type} of $M$,  as in Definition \ref{def:reduced normal}.  The corresponding normal bordism groups \cite{stong1} will be denoted by $\Omega_4(B(P,w))$. They can be calculated via the James spectral sequence  \cite{teichner1}, a variant of the Atiyah-Hirzebruch spectral sequence, with $E_2$-term
\[
E^2_{p,q} = H_p(P; \Omega_q^{TopSpin}(\ast)),
\]
where the notation $\Omega_*^{TopSpin}(\ast)$ means the \emph{topological} spin bordism groups.
The next result gives sufficient conditions for two 4-manifolds $M$ and $N$  to have the same reduced normal 2-type $B$. This is the first step when trying to apply Theorem~\ref{thm:reduced} to find a $s$-cobordism between $M$ and $N$. The step will be to see whether they represent the same element in $\Omega_4(B)$ for some choices of reduced normal 2-smoothings.
\begin{proposition}\label{prop:reduced}
Let  $M$, $N$ be closed oriented 4-manifolds with geometrically $2$-dimen\-sional fundamental groups.   Suppose there is an isomorphism $\alpha\colon \pi_1(M) \cong \pi_1(N)$ of their $w_2$-types and an  isometry $\beta\colon \pi_2(M)^\dag \cong \pi_2(N)^\dag$, compatible with $\alpha$. Then there is a $3$-coconnected fibration $B\to BSTOP$ admitting reduced normal $2$-smoothings $M\to B$ and $N\to B$ that induce $(\alpha,\beta)$ in the sense of Remark~\textup{\ref{rem:iso}}.
\end{proposition}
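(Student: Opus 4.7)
My plan is to take $B:=B(P(M),w_M)$, which by Lemma~\ref{lem:reduced} already admits a reduced normal $2$-smoothing $\tilde\nu_M$ from $M$, and to produce a reduced normal $2$-smoothing $N\to B$ that, together with $\tilde\nu_M$, induces $(\alpha,\beta)$ in the sense of Remark~\ref{rem:iso}. Since $\gdim\pi\le 2$ implies $\mathrm{cd}(\pi)\le 2$, the $k$-invariant in $H^3(\pi;\pi_2(M)^\dag)$ of $P(M)$ vanishes, so $P(M)\simeq K(\pi,1)\times K(V,2)$ where $V:=\pi_2(M)^\dag$; the same holds for $P(N)$. Hence $(\alpha,\beta)$ determines a homotopy equivalence $\Phi\colon P(M)\to P(N)$ realizing $\alpha$ on $\pi_1$ and $\beta$ on $\pi_2$, and the crux is to arrange $\Phi^*w_N=w_M$ in $H^2(P(M);\cy 2)$, after possibly adjusting $\Phi$.

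If $s_M$ is even, I would simply observe that $w_M$ and $w_N$ are pulled back from the $\pi$-level classes $w_M^\pi,w_N^\pi\in H^2(\pi;\cy 2)$ of the Introduction along the projections to $B\pi$; the hypothesis $\alpha^*w_N^\pi=w_M^\pi$ coming from the isomorphism of $w_2$-types, together with $\Phi$ covering $\alpha$ on $B\pi$, immediately gives $\Phi^*w_N=w_M$.

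In the odd case I would first note that the images of $w_M$ and of $\Phi^*w_N$ in the K\"unneth summand $H^2(\widetilde{P(M)};\cy 2)^\pi=\Hom_\bbZ(V,\cy 2)$ both coincide with the characteristic class $q\colon V\to\cy 2$, $q(x)\equiv\sMdag(x,x)\pmod 2$, since $\beta$ is an isometry. So $\delta:=\Phi^*w_N-w_M\in H^2(\pi;\cy 2)$. Self-equivalences of $P(M)\simeq K(\pi,1)\times K(V,2)$ inducing the identity on $\pi_1$ and $\pi_2$ are parameterized by $v\in H^2(\pi;V)$ via $\psi_v\colon(x,y)\mapsto(x,y+v(x))$, and a short calculation using the H-space structure on $K(V,2)$ shows $\psi_v^*w_M-w_M=q\circ v$. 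The key point is that type~(I) forces $q$ to be surjective; setting $K:=\ker q$ and using $\mathrm{cd}(\pi)\le 2$, the sequence $H^2(\pi;V)\xrightarrow{q_*}H^2(\pi;\cy 2)\to H^3(\pi;K)=0$ is exact, so $q_*$ is surjective. Picking $v$ with $q_*v=-\delta$, the twisted map $\Phi':=\Phi\circ\psi_v$ still realizes $(\alpha,\beta)$ and now satisfies $(\Phi')^*w_N=w_M$.

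Finally, the universal property of the homotopy pullback converts the inverse $\Psi:=(\Phi')^{-1}\colon P(N)\to P(M)$, which satisfies $\Psi^*w_M=w_N$, into an equivalence $B(P(N),w_N)\to B$ over $BSTOP$; composing $N$'s reduced normal $2$-smoothing of Lemma~\ref{lem:reduced} with this equivalence produces the desired $N\to B$, and tracking the induced maps on $\pi_1$ and $\pi_2^\dag$ shows that it realizes $(\alpha,\beta)$ together with $\tilde\nu_M$. I expect the hardest part to be the odd case: matching $w_M$ to $\Phi^*w_N$ after twisting $\Phi$ depends essentially on both the type~(I) assumption (so that $q$ is surjective) and on $\gdim\pi\le 2$ (so that $H^3(\pi;\ker q)$ vanishes).
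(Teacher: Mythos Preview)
Your proof is correct and follows essentially the same route as the paper's: both dispatch the even case directly from the $w_2$-type hypothesis, and in the odd (type~(I)) case adjust by a self-equivalence of $P$ inducing the identity on $\pi_1$ and $\pi_2$, using that such self-equivalences are parameterized by $H^2(\pi;\pi_2(P))$ and that the coefficient map $q_*\colon H^2(\pi;V)\to H^2(\pi;\cy 2)$ is surjective (from $q\neq 0$ and $\gdim\pi\le 2$). The paper isolates this odd-case adjustment as a separate Lemma~\ref{lem:aut}, phrased via the characteristic class $\iota_P\in H^2(P;\pi_2(P))$ rather than your explicit $\psi_v$, and modifies $c_M$ rather than $\Phi$, but the content is the same.
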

\begin{proof} 
As explained above, we may assume that $B=B(P,w)$ is constructed starting with a reduced $3$-equivalence $c_M:M\to P$. The maps $\alpha,\beta$ give a reduced $3$-equivalence $c_N \colon N\to P$. 

If $w$ lies in the image of $H^2(\pi;\cy 2)$ then $c_N^*(w) = w_2(N)$ by our assumption that $\alpha$ is an isomorphism of $w_2$-types. It follows just like in Lemma~\ref{lem:reduced} that $N$ admits a reduced normal 2-smoothing to $B$. By construction, the induced maps on $\pi_1$ and $(\pi_2)^\dag$ compose to $(\alpha,\beta)$. 

The harder case is when $w$ does {\em not} lie in the image of $H^2(\pi;\cy 2)$. This is exactly the case where $M$ and $N$ have type~(I). Since $c_M$ and $c_N$ induce the isometry 
\[
\beta=(c_N)^{-1}_*\circ (c_M)_*
\]
on reduced intersection forms, we know that $c_N^*(\tilde w)=w_2(\widetilde N)$, where $\tilde w$ is the pullback of $w$ to the universal covering $\widetilde P$. It follows that $c_N^*(w) = w_2(N) + u^*(x)$ for some $x\in H^2(\pi;\cy 2)$ and $u:P\to K(\pi,1)$ a 2-equivalance. After post-composing $c_M$ with the self-homotopy equivalence provided by the following lemma, our proof finishes just as in the  case above.
\end{proof}

\begin{lemma}\label{lem:aut}
Given $x\in H^2(\pi;\cy 2)$ and $w\in H^2(P;\cy 2)$ not in the image of $H^2(\pi;\cy 2)$, there is a self-homotopy equivalence $h$ of $P$ over $u\colon P\to K(\pi,1)$ such that $h^*(w) = w + u^*(x)$. Moreover, $h$ induces the identity map on $\pi_2(P)$. 
\end{lemma}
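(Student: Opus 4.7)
My plan is to construct $h$ as a fibrewise translation on $P$. Since $\gdim(\pi) \leq 2$, the $k$-invariant in $H^3(\pi;A)$ of the two-stage Postnikov system $P$ vanishes (here $A := \pi_2(P)$), so $P$ may be modelled by the Borel construction $E\pi \times_\pi K(A,2)$. The projection $u\colon P \to K(\pi,1)$ then carries a canonical zero section $s_0$ and becomes a fibrewise $H$-space over $K(\pi,1)$ with fibre $K(A,2)$, and sections of $u$ are classified by $H^2(\pi; A)$. Given $\alpha \in H^2(\pi; A)$ represented by a section $\tilde\alpha$, I would define
\[
h_\alpha(p) := p \cdot \tilde\alpha\bigl(u(p)\bigr)
\]
using the fibrewise multiplication. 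This is a self-homotopy equivalence over $u$, and on each fibre $K(A,2)$ it is a translation by a constant, hence homotopic to the identity, so $h_\alpha$ induces the identity on $\pi_2(P) = A$.

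The main computational step is a formula for $h_\alpha^*(w)$. The Serre spectral sequence of $u$, together with $H^1(K(A,2); \cy 2) = 0$, gives the exact sequence
\[
0 \to H^2(\pi;\cy 2) \xrightarrow{u^*} H^2(P;\cy 2) \xrightarrow{\mathrm{res}} \Hom_\pi(A,\cy 2),
\]
so the hypothesis $w \notin u^{*} H^2(\pi;\cy 2)$ means precisely that $\ell := \mathrm{res}(w)$ is a nonzero, hence surjective, $\pi$-equivariant homomorphism $A \to \cy 2$. To compute $h_\alpha^*(w)$, write $h_\alpha = \mu \circ (\mathrm{id}_P, \tilde\alpha \circ u)$ with $\mu$ the fibrewise multiplication. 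The primitivity of the fundamental class of $K(A,2)$ under multiplication, combined again with the vanishing of $H^1(K(A,2);\cy 2)$, yields the identity
\[
\mu^*(w) = \mathrm{pr}_1^*(w) + \mathrm{pr}_2^*(w) + p^*\bigl(s_0^*(w)\bigr)
\]
in $H^2(P \times_{K(\pi,1)} P; \cy 2)$, where $p$ is the projection to $K(\pi,1)$. Pulling back along $(\mathrm{id}, \tilde\alpha \circ u)$ and using $\tilde\alpha^*(w) - s_0^*(w) = \ell_*(\alpha)$ in $H^2(\pi;\cy 2)$ then produces
\[
h_\alpha^*(w) = w + u^*\bigl(\ell_*(\alpha)\bigr),
\]
where $\ell_* \colon H^2(\pi;A) \to H^2(\pi;\cy 2)$ is induced by $\ell$.

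It remains to realize any $x \in H^2(\pi;\cy 2)$ as $\ell_*(\alpha)$ for some $\alpha$. From the short exact sequence $0 \to K \to A \to \cy 2 \to 0$ of $\Zpi$-modules with $K := \ker \ell$, the long exact cohomology sequence reads
\[
H^2(\pi; A) \xrightarrow{\ell_*} H^2(\pi;\cy 2) \to H^3(\pi;K),
\]
and the hypothesis $\gdim(\pi) \leq 2$ forces $H^3(\pi;K) = 0$. Hence $\ell_*$ is surjective, and any preimage $\alpha$ of $x$ gives $h := h_\alpha$ with the required property. The principal obstacle I anticipate is the justification of the multiplicativity formula $\mu^*(w) = \mathrm{pr}_1^*(w) + \mathrm{pr}_2^*(w) + p^*(s_0^*(w))$ in the twisted Borel-construction setting and the verification of the section-difference identity $\tilde\alpha^*(w) - s_0^*(w) = \ell_*(\alpha)$; the rest of the argument reduces to the Serre spectral sequence and the vanishing of group cohomology above degree two.
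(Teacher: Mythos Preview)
Your argument is correct and follows the same underlying strategy as the paper's: parametrize the relevant self-equivalences of $P$ by classes in $H^2(\pi; A)$, compute their effect on $w$ through the coefficient homomorphism $\ell = w|_{K(A,2)}\colon A \to \cy 2$, and use $\gdim\pi\leq 2$ to see that the induced map $\ell_*\colon H^2(\pi;A)\to H^2(\pi;\cy 2)$ is surjective. The paper's version is more abstract: it invokes the Baues--Hillman classification of self-equivalences inducing the identity on $\pi_1$ and $\pi_2$, together with a characteristic class $\iota_P\in H^2(P;\pi_2(P))$ satisfying $h_\phi^*(\iota_P)=\iota_P+u^*(\phi)$, and then pushes this relation forward along the coefficient map $w_*$. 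Your Borel-construction model with its fibrewise $H$-space translation $h_\alpha$ is a direct, self-contained construction of exactly the self-equivalence $h_\phi$ that the paper cites; your primitivity computation of $\mu^*(w)$ together with the section-difference identity amounts to reproving $h_\phi^*(\iota_P)=\iota_P+u^*(\phi)$ after applying $\ell_*$. The two steps you flag as obstacles are genuine but routine: the multiplicativity formula follows because $\mu^*(w)-\mathrm{pr}_1^*(w)-\mathrm{pr}_2^*(w)$ restricts trivially to the fibre (by primitivity of the fundamental class of $K(A,2)$), hence lies in $p^*H^2(\pi;\cy 2)$, and pulling back along $(s_0,s_0)$ identifies the correction term; the section identity follows from the splitting $w=u^*(s_0^*w)+\ell_*(\iota_P)$ determined by $s_0$, together with $\tilde\alpha^*(\iota_P)=\alpha$. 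What your approach buys is independence from the cited references; what the paper's buys is brevity.
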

\begin{proof}
The homotopy classes of self-equivalences of $P$ inducing the identity on $\pi_1$ and $\pi_2$ are in bijection with the group $H^2(\pi; \pi_2(P))$. This correspondence is explicitly described in \cite[Theorem 5.2.4, p.~300]{baues1}, and explained in the proof of Lemma 11 in  \cite{hillman5}. After choosing a section of the fibration $P \to K(\pi,1)$, there is a characteristic element $\iota_P \in H^2(P; \pi_2(P))$ which maps to the identity in $\Hom_\La(\pi_2(P), \pi_2(P))$ under the natural evaluation map. For any class $\phi\in H^2(\pi; \pi_2(P))$, there exists a self-equivalence
$h_\phi\colon P \to P$ such that $$h^*(\iota_P) = \iota_P + u^*(\phi). $$
 If $ w \in H^2(P;\cy 2)$ is  not in the image of $H^2(\pi;\cy 2)$, it induces a non-zero coefficient homomorphism $w\colon \pi_2(P) \to \cy 2$, and we obtain a map $$w_*\colon H^2(\pi; \pi_2(P)) \to H^2(P;\cy 2).$$ Since $\gdim \pi \leq 2$, any class $x\in H^2(\pi;\cy 2)$ lifts to a class $\phi \in 
H^2(\pi; \pi_2(P))$ such that $w_*(\phi) = x$, by the long exact coefficient sequence. Now the required formula follows by applying the change of coefficients map $w_*$ to both sides of the relation above.
\end{proof}

\begin{proof}[The proof of Theorem C]
We can now reduce  the proof of Theorem~C to the existence of a suitable $B$-bordism, which will be studied in Section~\ref{sec:Bbordism}.  Let $M$ and $N$ be two $4$-manifolds as in that theorem. By Proposition~\ref{prop:reduced} we know that they admit reduced normal 2-smoothings in a common reduced normal 2-type $B\to BSTOP$, inducing the given maps $(\alpha,\beta)$. We know that the fundamental group in question satisfies properties (W-AA) and hence we can apply Theorem~\ref{thm:reduced}, once we show that the reduced normal 2-smoothings are $B$-bordant.

The injectivity of the assembly map $A_4$ is needed to show that such a $B$-bordism indeed exists if $M$ and $N$ satisfy the assumptions of Theorem~C. This fact is proven in Section~\ref{sec:Bbordism} and summarized in Corollary~\ref{cor:intersectionform}.
\end{proof}

\section{Thickenings and minimal models}\label{sec:minimal}

For the proof of Theorem B it is important to have a good model for each bordism class in the normal $1$-type of $M$. The central tool for this is the construction of certain minimal $4$-manifolds.

For any closed $4$-manifold $M$ with fundamental group $\pi$, we have an exact sequence
\eqncount
\begin{equation}\label{univcoeff}
0 \to H^2(\pi;\La) \to H^2(M;\La) \to \Hom_{\La}(H_2(M;\La), \La) \to H^3(\pi;\La)\to 0
\end{equation}
arising from the universal coefficients spectral sequence. Here and in the following we denote by $\La$ the group-ring $\bbZ\pi$ of the fundamental group. Using Poincar\'e duality and the Hurewicz isomorphism, we get
\[
H^2(M;\La) \cong H_2(M;\La) \cong \pi_2(M)
\]
and we can identify the middle map in the above sequence with the adjoint of the equivariant intersection form $s_M$. Therefore, we obtain the following corollary.
\begin{corollary} \label{cor:rad}
The radical $R(s_M)$ of  the intersection form $s_M$ is isomorphic  to the $\pi$-module $R(\pi):= H^2(\pi; \La)$. Similarly, the coradical of $s_M$ is always isomorphic to $H^3(\pi;\La)$. 
\end{corollary}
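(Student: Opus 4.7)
The plan is to read off both claims directly from the exact sequence (\ref{univcoeff}) after identifying the middle map with the adjoint of $s_M$.

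First I would recall the chain of identifications $H^2(M;\La) \cong H_2(M;\La) \cong \pi_2(M)$ coming from Poincar\'e duality with local coefficients on the universal cover $\wM$ and the Hurewicz theorem, just as stated above the corollary. Under these identifications the term $\Hom_\La(H_2(M;\La),\La)$ becomes $\Hom_\La(\pi_2(M),\La)$, and the middle arrow of (\ref{univcoeff}) becomes a $\La$-linear map $\pi_2(M)\to \Hom_\La(\pi_2(M),\La)$.

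The key step is to check that this middle arrow coincides with the adjoint $x\mapsto s_M(-,x)$ of the equivariant intersection form. By construction the middle map of the universal coefficient sequence is the edge homomorphism, namely the Kronecker evaluation pairing $H^2(M;\La)\otimes H_2(M;\La)\to \La$. Transporting one factor through Poincar\'e duality to $H^2_c(\wM;\bbZ)$ converts this Kronecker pairing into the cup product, and the cup product is by definition (\ref{sMdef}) the form $s_M$. This is really the only nontrivial point, and I expect it to be the main obstacle: one has to verify the naturality of the identifications (Hurewicz, Poincar\'e duality, and the edge map of the universal coefficient sequence) carefully enough so that Kronecker evaluation is matched with cup product with the correct signs and orientations. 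This compatibility is standard and can be assembled from the corresponding facts for $\wM$ with $\bbZ$-coefficients together with $\pi$-equivariance.

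With this identification in place, the conclusions are immediate from the exactness of (\ref{univcoeff}). The radical $R(s_M)$ is by definition the kernel of the adjoint map, so the first four terms of the sequence identify $R(s_M)\cong H^2(\pi;\La)$, giving the first assertion. The coradical is by definition the cokernel of this adjoint, so the last four terms of (\ref{univcoeff}) identify it with $H^3(\pi;\La)$, yielding the second assertion.
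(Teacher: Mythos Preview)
Your proposal is correct and follows exactly the argument the paper gives in the paragraph immediately preceding the corollary: identify $H^2(M;\La)\cong H_2(M;\La)\cong\pi_2(M)$ via Poincar\'e duality and Hurewicz, recognize the middle map of (\ref{univcoeff}) as the adjoint of $s_M$, and read off kernel and cokernel. You have in fact supplied slightly more detail than the paper does on why the edge homomorphism becomes the adjoint of $s_M$ under these identifications.
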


To construct smooth $4$-manifolds with a given fundamental group $\pi$ we start with a finite presentation $\cP$ of $\pi$ with $n$ generators and $m$ relations. Then there is a 2-complex $X(\cP)$ with a single 0-cell,  $n$ 1-cells and  $m$ 2-cells, attached according to the relations. We can turn this 2-complex to a $4$-manifold as follows. Take a single 0-handle and add $n$ oriented 1-handles leading to a boundary connected sum of $n$ copies of $S^1 \times D^3$. 

The attaching maps for $m$ 2-handles are homotopically determined by $\cP$ but they can also knot, link and have interesting framings. 
This leads to many different $4$-manifolds with fundamental group $\pi$ which we refer to as 4-dimensional {\em thickenings} of $X(\cP)$. This is a $4$-manifold that contains $X(\cP)$ as a deformation retract. This zoo of possible thickenings is drastically reduced when we double such a thickening along its boundary to produce a {\em closed}, smooth, orientable $4$-manifold. It is naturally the boundary of a 5-dimensional thickening of $X(\cP)$, namely the product of a $4$-dimensional thickening cross the unit interval.
\begin{lemma}\label{lem:thickenings} For any finite presentation $\cP$ of a group $\pi$, and class $w\in H^2(X(\cP);\cy 2)$, there is unique orientable 5-dimensional thickening $W(\cP,w)$ with deformation retraction $r$ onto $X(\cP)$ satisfying $r^*(w) = w_2(W(\cP,w))$. 
\end{lemma}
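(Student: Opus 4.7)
The plan is to combine the handlebody presentation of $W(\cP,w)$ dictated by $\cP$ with the classification of oriented stable bundles over a $2$-complex, which is entirely captured by $w_2 \in H^2(X(\cP);\cy 2)$. Since $\dim W = 5 \geq 2\dim X(\cP)+1$, we are in the Wall stable range for thickenings, so smooth oriented $5$-dimensional thickenings of $X(\cP)$ are classified up to diffeomorphism of pairs (compatibly with the retraction) by their restricted stable tangent bundle, equivalently by $w_2$.

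For existence I would build $W_\epsilon$ handle-theoretically: start with a $0$-handle $D^5$; attach $n$ oriented $1$-handles, one per generator of $\cP$, to obtain a manifold $W_1$ with $\partial W_1 \cong \#_n(S^1\times S^3)$ that retracts onto the $1$-skeleton of $X(\cP)$; then attach $m$ $2$-handles along framed embedded circles $\gamma_i\subset \partial W_1$ in the free homotopy classes prescribed by the $m$ relators. By general position in the $4$-manifold $\partial W_1$, the unframed isotopy class of $\gamma_i$ is determined by its free homotopy class, leaving only the framing $\epsilon_i \in \pi_1(SO(3))\cong \cy 2$ as free data. A cell-by-cell analysis of $TW_\epsilon|_{X(\cP)}$ identifies the $2$-cochain $\epsilon=(\epsilon_1,\dots,\epsilon_m)\in C^2(X(\cP);\cy 2)$ with a cellular representative of $w_2(W_\epsilon)$ under the retraction-induced isomorphism $H^2(W_\epsilon;\cy 2)\cong H^2(X(\cP);\cy 2)$. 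Since the quotient $C^2(X(\cP);\cy 2) \twoheadrightarrow H^2(X(\cP);\cy 2)$ is surjective, any prescribed class $w$ is realized by a suitable choice of framings.

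For uniqueness, an oriented stable bundle over a $2$-complex is classified by $w_2$, since $BSO$ has $\pi_1=0$ and $\pi_2\cong \cy 2$ detected by $w_2$. Hence if $(W,r)$ and $(W',r')$ both satisfy the hypothesis of the lemma, the restricted stable tangent bundles over $X(\cP)$ are isomorphic, and Wall's thickening theorem in the stable range yields a diffeomorphism $W\cong W'$ intertwining the retractions. The main technical point to justify is the identification of the framing cochain $\epsilon$ with $w_2(W_\epsilon)$; this comes down to the standard observation that the obstruction to extending a trivialization of $TW_\epsilon|_{X(\cP)^{(1)}}$ across the $i$-th $2$-cell is precisely the class $\epsilon_i \in \pi_1(SO(3))=\pi_1(SO(5))$, which gives the required representative cellular cocycle for $w_2(W_\epsilon)$.
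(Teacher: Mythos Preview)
Your argument is correct and closely parallels the paper's, particularly the existence half: both build $W$ handle-by-handle, note that in a $4$-dimensional boundary the attaching circles are determined up to isotopy by their homotopy class, and identify the residual framing ambiguity $\epsilon_i\in\pi_1(SO(3))\cong\cy 2$ with a cellular cocycle representing $w_2$.

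The difference lies in the uniqueness argument. You invoke Wall's stable-range thickening theorem as a black box: since $5\ge 2\cdot 2+1$, oriented $5$-dimensional thickenings of a $2$-complex are classified by their stable tangent bundle, hence by $w_2$. The paper instead gives the hands-on version of this: since $X(\cP)$ has no $3$-cells every $2$-cochain is a cocycle, so one must only check that a $2$-\emph{coboundary} does not change the diffeomorphism type. An elementary coboundary flips the framing of every $2$-handle running an odd number of times over a fixed $1$-handle, and this is realized by the twisting self-diffeomorphism of $S^1\times S^3=\partial(S^1\times D^4)$ coming from $\pi_1(SO(3))$, which extends over the $0$- and $1$-handle. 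Your route is cleaner and conceptually explains \emph{why} only $w_2$ matters; the paper's route is more self-contained and makes the relevant diffeomorphism explicit. Either is acceptable, and indeed the paper itself remarks that the result ``goes back to Wall's paper on thickenings'' before opting for the direct handle argument.
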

\begin{proof} 
This Lemma goes back to Wall's paper on thickenings \cite{wall-thickening} but we give a direct handle argument for the convenience of the reader: Since the handles are 5-dimensional the attaching circles of the 2-handles cannot knot or link and hence their isotopy class is determined by the presentation $\cP$. Moreover, the framing on each 2-handle is just defined mod 2. More precisely, a cocycle representative for the class $w\in H^2(X(\cP);\cy 2)$ gives a function on the $2$-cells to $\cy 2$, which can be used to vary the given framings by an element of $\pi_1(SO(3)) = \cy 2$ for each 2-handle. If one started with {\em trivial} framings (that extend over some Seifert surface) then it is not hard to see that $w$ turns into the second Stiefel-Whitney class of the thickening $W(\cP,w)$. 

Since any 2-cochain is a 2-cocyle of $X(\cP)$ it only remains to show that a 2-coboundary doesn't change the diffeomorphism class of $W(\cP,w)$. It suffices to discuss 2-coboundaries that change the framings of all 2-handles that go over a given 1-handle an odd number of times. However, a 1-handle forms $S^1 \times D^4$ together with the 0-handle and the twisting diffeomorphism of $S^1 \times S^3$ coming from $\pi_1(SO(3))$ clearly extends over this 5-manifold.
\end{proof}
We denote by $M(\cP,w)$ the boundary of a 5-dimensional thickening $W(\cP,w)$. As explained above, this is the double of a $4$-dimensional thickening. Now recall that for any space $X$ with fundamental group $\pi$ there is an exact sequence
\begin{equation} \label{eq:H^2}
\xymatrix{0 \ar[r] & H^2(\pi;\cy 2)\ar[r] & H^2(X;\cy 2) \ar[r] & H^2(\widetilde X; \cy 2)^\pi
}
\end{equation}
In particular, if $X$ is aspherical then the first map is an isomorphism which will be the case for $X=X(\cP)$ in the following discussion. In that case, the double $M(\cP,w)$ is determined by a class $w\in H^2(\pi;\cy 2)$ which is its $w_2$-type as explained in the introduction. 
 It will be implicit in the following that if a $4$-manifold has a  $w_2$-type $w\in H^2(\pi;\cy 2)$ then it does not have type~(I).

\begin{definition} \label{def:minimal}
A closed oriented $4$-manifold $M$ will be called \emph{minimal} if the intersection form on $\pi_2(M)$ vanishes, or equivalently, if $\pi_2(M) = R(s_M) \cong R(\pi)$ via the map in the above sequence~\ref{univcoeff}.
\end{definition}
The following class of groups turns out to allow minimal $4$-manifolds and several other steps in the classification of Bausmlag-Solitar groups generalize easily to this larger class.
\begin{definition}
A finitely presentable group $\pi$ is \emph{geometrically $2$-dimensional} if there exists a finite presentation $\cP$ such that the corresponding 2-complex $X(\cP)$ is aspherical. We will use the notation $\gdim \pi \leq 2$.
\end{definition}
\begin{remark} A finite presentation $\cP$ for which $X(\cP)$ is aspherical will be called a \emph{minimal} presentation. 
This is equivalent to the matrix of Fox derivatives corresponding to $\cP$ being non-singular over $\Zpi$.  
Examples of geometrically $2$-dimensional groups include free groups,  $1$-relator groups and small cancellation groups \cite{lyndon-schupp1}, provided they are torsion-free, as well as many word-hyperbolic groups (see also \cite[2.3]{hillman1}, \cite[\S 10]{hillman4}).
\end{remark}
We shall prove that  minimal $4$-manifolds with fundamental group $\pi$ exist, assuming that $\gdim \pi \leq 2$. 
\begin{lemma}\label{lem:minimal} If $\gdim \pi\leq 2$, the doubles $M(\cP,w)$ are minimal for any minimal presentation $\cP$ of $\pi$. 
\end{lemma}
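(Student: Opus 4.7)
The plan is to prove that the equivariant intersection form $s_M$ vanishes identically on $\pi_2(M)$; by Definition~\ref{def:minimal} this is exactly minimality. Equivalently, by the universal coefficient sequence~(\ref{univcoeff}), it suffices to show that the edge map $c^* \colon H^2(\pi; \La) \to H^2(M; \La)$, i.e.\ pullback along the classifying map $c \colon M \to K(\pi, 1)$, is surjective, since this forces the adjoint of $s_M$ to be zero.

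The key geometric input is that $M = M(\cP, w) = V \cup_{\partial V} V$ is the double of a $4$-dimensional thickening $V$ of the aspherical $2$-complex $X(\cP)$; hence $V$ deformation retracts onto $K(\pi, 1)$, so $H_i(V; \La) = 0$ for $i \geq 1$ (by Shapiro's lemma applied to the contractible universal cover of $V$). I would first run the long exact sequence of the pair $(M, V)$ with $\La$-coefficients. By excision, $H^k(M, V; \La) \cong H^k(V', \partial V; \La)$, where $V'$ denotes the other half-thickening; Poincar\'e--Lefschetz duality rewrites this as $H_{4-k}(V'; \La)$, which vanishes for $k = 2, 3$ by the previous remark. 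It follows that the restriction $i^* \colon H^2(M; \La) \to H^2(V; \La) \cong H^2(\pi; \La)$ is an isomorphism.

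Next I would observe that the fold retraction $r \colon M \to V$ satisfies $i^* r^* = \mathrm{id}$, so $r^* = (i^*)^{-1}$ is the inverse isomorphism. By naturality, $r^*$ agrees with the edge map $c^*$ in~(\ref{univcoeff}) up to the chosen homotopy equivalence $V \simeq K(\pi, 1)$, so $c^*$ is surjective. The next map in~(\ref{univcoeff}) -- the adjoint of $s_M$ -- must therefore vanish; hence $s_M \equiv 0$, so $\pi_2(M) = R(s_M)$ and this module is isomorphic to $R(\pi)$ via Corollary~\ref{cor:rad}. The main routine-but-important step to spell out is the naturality identification between the geometric isomorphism $H^2(M; \La) \cong H^2(\pi; \La)$ and the edge map of the universal coefficient spectral sequence; a secondary technicality is that Poincar\'e--Lefschetz duality with twisted $\La$-coefficients is being invoked for a topological $4$-manifold, where the required statements are standard from Wall and Kirby--Siebenmann.
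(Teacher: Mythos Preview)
Your proof is correct and follows essentially the same strategy as the paper: exploit the double decomposition $M = V \cup_{\partial V} V'$, use that each half is aspherical (so $H_i(V;\La)=0$ for $i\geq 1$), apply the long exact sequence of the pair together with excision and Poincar\'e--Lefschetz duality. The only difference is cosmetic: the paper runs the argument in homology to obtain the chain of isomorphisms
\[
\pi_2(M)\cong H_2(M;\La)\cong H_2(M,N;\La)\cong H_2(N,\partial N;\La)\cong H^2(N;\La)\cong H^2(\pi;\La),
\]
whereas you dualise and work in cohomology. Your version has the minor advantage that, by invoking the fold retraction $r$ and checking $i^*r^*=\mathrm{id}$, you make explicit that the resulting isomorphism $H^2(\pi;\La)\cong H^2(M;\La)$ is precisely the edge map in~(\ref{univcoeff}), so the vanishing of the adjoint of $s_M$ (and hence minimality in the sense of Definition~\ref{def:minimal}) is immediate; the paper leaves this identification implicit.
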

\begin{proof} 
We have $M := M(\cP,w) = N\cup N$, where $N$ denotes one of the thickenings of $\cP$ described above. The long exact sequence
$$ \dots \to H_2(N;\La) \to H_2(M; \La) \to H_2(M, N; \La) \to H_1(N;\La)$$
reduces to an isomorphism in the middle because $H_i(N;\La)=0$ for $i=1,2$ since $N$ has the homotopy type of the aspherical 2-complex $X(\cP)$. This, together with excision and Poincar\'e duality, leads to isomorphisms
\[
\pi_2(M) \cong H_2(M;\La) \cong H_2(M, N; \La) \cong H_2(N,\partial N;\La) \cong H^2(N;\La) \cong H^2(\pi; \La) = R(\pi)\]
showing that $M$ is minimal.
\end{proof}
\begin{remark}\label{wtwo_lift} There is no minimal $4$-manifold $M$ with type~(I). This is because $w_2(\wM) \neq 0$ would imply that there exists a class $x\in \pi_2(M)$ with ordinary self-intersection $x\cdot x \not\equiv 0\ (\Mod 2)$. But the ordinary self-intersection number is just the coefficient of $s_M(x,x)$ at the identity element, and for a minimal $4$-manifold the form $s_M$ is zero. 
\end{remark}

\section{Stable Classification}\label{sec:stable}
Recall that two $4$-manifolds are {\em stably homeomorphic} if they become homeomorphic after connected sum with copies of $S^2 \times S^2$. It is clear that this operation preserves the fundamental group and the  $w_2$-type. Fixing the fundamental group $\pi$, the stable classification is always given by the bordism group of the {\em normal 1-type} of the $4$-manifolds, see \cite[p.~711]{kreck3}. As for the normal 2-type explained in Section~\ref{sec:strategy}, this is a 2-coconnected fibration $B\to BSTOP$ that admits a lift of the normal Gauss map that is a 2-equivalence. 

The easiest case is type~(I) where we have the following  application of the methods of  \cite{kreck3}. For any closed, oriented $4$-manifold $M$, let  $c\colon M \to K(\pi,1)$ denote a classifying map of its universal covering, and let $c_*[M]\in H_4(\pi,\bbZ)$ denote the image of its fundamental class.
\begin{lemma}
Two closed oriented $4$-manifolds $M_1$ and $M_2$ of  type~\textup{(I)} are stably homeomorphic if and only if they have the same fundamental group, signature and Kirby-Siebenmann invariant, and $c_*[M_1] = c_*[M_2] \in H_4(\pi; \bbZ)$.
\end{lemma}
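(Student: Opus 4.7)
The plan is to invoke Kreck's stable classification theorem and then compute the resulting bordism group by means of the Atiyah--Hirzebruch spectral sequence.

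First, I would identify the normal $1$-type of a type~(I) manifold $M$ with $\pi_1(M)=\pi$. Since $w_2(\wM)\neq 0$, the universal cover admits no spin structure, so no non-trivial lifting of $\nu_M$ is forced, and the normal $1$-type is simply the projection $B := \Bpi \times BSTOP \to BSTOP$. The pair $(c,\nu_M)$ furnishes a $2$-equivalent lift of $\nu_M$ to $B$, because $\pi_1(B)=\pi$ and $\pi_2(B)=0$. By \cite{kreck3}, stable homeomorphism (with respect to a chosen identification of fundamental groups) then amounts to equality of classes in $\Omega_4(B)=\Omega_4^{STOP}(\Bpi)$.

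The basepoint inclusion $\mathrm{pt}\hookrightarrow\Bpi$, split by the collapse map, yields a splitting
\[
\Omega_4^{STOP}(\Bpi) \ \cong \ \Omega_4^{STOP} \ \oplus \ \widetilde\Omega_4^{STOP}(\Bpi).
\]
The left-hand summand is $\bbZ\oplus\cy 2$, detected by $\sign$ and the Kirby--Siebenmann invariant, so I only need to compute $\widetilde\Omega_4^{STOP}(\Bpi)$. In the reduced Atiyah--Hirzebruch spectral sequence with $\widetilde E^2_{p,q}=\widetilde H_p(\Bpi;\Omega_q^{STOP})$, the vanishing of $\Omega_i^{STOP}$ for $i=1,2,3$ kills all contributions in total degree~$4$ except $\widetilde E^2_{4,0}=H_4(\pi;\bbZ)$. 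All differentials into and out of this position vanish for degree reasons (the possible sources and targets land in $\widetilde H_p(\Bpi;\Omega_q^{STOP})$ with $q\in\{1,2,3\}$, or with $p\le 0$), so $\widetilde\Omega_4^{STOP}(\Bpi)\cong H_4(\pi;\bbZ)$ via the edge homomorphism $[M,c]\mapsto c_*[M]$.

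Putting the two computations together yields an isomorphism
\[
\Omega_4^{STOP}(\Bpi) \ \cong \ \bbZ \ \oplus \ \cy 2 \ \oplus \ H_4(\pi;\bbZ),
\]
given on bordism classes by $[M,c]\mapsto(\sign(M),KS(M),c_*[M])$, and the lemma follows immediately. The main potential obstacle is the verification that all differentials in the spectral sequence vanish; passing to reduced bordism makes this straightforward, since the surviving $E^2$-term at $(4,0)$ is isolated from any non-zero group in total degree~$3$ or~$5$.
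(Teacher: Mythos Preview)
Your proof is correct and follows essentially the same route as the paper: identify the normal $1$-type as $BSTOP\times K(\pi,1)$, then compute $\Omega_4^{STOP}(K(\pi,1))$ via the Atiyah--Hirzebruch spectral sequence using $\Omega_q^{STOP}(\ast)=0$ for $1\le q\le 3$ and $\Omega_4^{STOP}(\ast)=\bbZ\oplus\cy 2$. Your explicit splitting into reduced and unreduced parts and your check of the differentials add detail the paper omits, but the argument is the same.
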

\begin{proof}
For  type~(I), the normal 1-type is just $BSTOP \times \Bpi$ and we can use the Atiyah-Hirzebruch spectral sequence  (and the well-known values 
$\Omega_4^{STOP}(\ast)= \bbZ \oplus\cy 2$ and $\Omega_q^{STOP}(\ast) = 0$ for $1\leq q \leq 3$) to compute
\[
\Omega^{STOP}_4(\Bpi) \cong \bbZ \oplus \cy 2\oplus H_4(\pi;\bbZ)
\]
with signature, Kirby-Siebenmann invariant and fundamental class $c_*[M]$ giving the isomorphism.
\end{proof}

\begin{remark}\label{signature_remark}
Recall that the signature of a closed, oriented $4$-manifold $M$ refers to the signature of the intersection form on $H^2(M; \bbZ)$, given by the cup product pairing and evaluation on $[M]\in H_4(M;\bbZ)$. 
If a 2-equivalence $c\colon M \to K(\pi,1)$ has the property that $0=c_*[M]\in H_4(\pi;\bbZ)$, then $\sign(M)$ is equal to the signature of the form $s_M\otimes_{\Zpi}\bbZ$. This is because the image $H^2(\pi;\bbZ) \overset{c^*}{\to} H^2(M;\bbZ)$ is totally isotropic under the cup product pairing by the following observation for $x,y\in H^2(\pi;\bbZ) $:
 \[
 \langle c^*(x) \cup c^*(y), [M] \rangle = \langle x\cup y , c_*[M] \rangle =0
 \]
 This remark applies in particular to all $4$-manifolds with geometrically 2-dimensional fundamental group.
 \end{remark}

\begin{corollary}
Let $M$ be a closed, oriented  $4$-manifold  with geometrically 2-dimensional fundamental group and  type~\textup{(I)}. Then $M$ is stably homeomorphic to $M_0 \Sharp N$, where $M_0$ is minimal and $N$ is a closed simply-connected $4$-manifold.
\end{corollary}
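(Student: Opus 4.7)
The plan is to invoke the preceding stable classification lemma, which says that two type~(I) $4$-manifolds with fundamental group $\pi$ are stably homeomorphic if and only if they agree on $(\sign, KS, c_*[\,\cdot\,])$. So it suffices to produce a specific stable representative of the form $M_0\Sharp N$ with $M_0$ minimal and $N$ simply-connected, realizing the invariants of $M$.

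First, I would exploit the geometric 2-dimensionality of $\pi$ to eliminate the $H_4(\pi;\bbZ)$ invariant entirely: since $\pi$ admits an aspherical $2$-dimensional $K(\pi,1)$, we have $H_4(\pi;\bbZ)=0$. Hence for \emph{any} closed oriented $4$-manifold $X$ with fundamental group $\pi$ the class $c_*[X]\in H_4(\pi;\bbZ)$ automatically vanishes, and only $\sign$ and $KS$ need to be matched.

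Next, pick a minimal presentation $\cP$ of $\pi$ and let $M_0:=M(\cP,0)$, the double of the orientable $5$-dimensional thickening of $X(\cP)$ with trivial $w_2$-class, as in Lemma~\ref{lem:thickenings}. By Lemma~\ref{lem:minimal}, $M_0$ is minimal with fundamental group $\pi$; by construction $w_2(M_0)=0$, and by Remark~\ref{wtwo_lift}, $w_2(\wM_0)=0$ as well, so $M_0$ has type~(II). Now choose any simply-connected closed topological $4$-manifold $N$ with
\[
\sign(N)=\sign(M)-\sign(M_0), \quad KS(N)\equiv KS(M)-KS(M_0)\ (\Mod 2), \quad w_2(N)\neq 0.
\]
Such an $N$ exists by Freedman's work: it can be built as a connected sum of copies of $\CP^2$, $\overline{\CP^2}$ and, if the $KS$ adjustment requires it, a copy of the $E_8$-manifold, using at least one $\CP^2$ summand to guarantee $w_2(N)\neq 0$.

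It remains to check that $M_0\Sharp N$ has the same invariants as $M$ in the sense of the preceding lemma. The fundamental group is $\pi$ since $N$ is simply-connected. The universal cover $\widetilde{M_0\Sharp N}$ is obtained from $\wM_0$ by attaching copies of $N$; since $w_2(\wM_0)=0$ while $w_2(N)\neq 0$, we have $w_2(\widetilde{M_0\Sharp N})\neq 0$, so $M_0\Sharp N$ has type~(I). Signature and $KS$ are additive under connected sum, so they match $M$ by our choice of $N$, and $c_*[M_0\Sharp N]=0=c_*[M]$ as noted above. The preceding lemma now gives the desired stable homeomorphism. The only delicate point in the argument is ensuring that all three simply-connected invariants ($\sign$, $KS$ and $w_2\neq 0$) can be realized simultaneously, which is handled by Freedman's classification of simply-connected closed topological $4$-manifolds.
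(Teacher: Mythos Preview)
Your proof is correct and follows essentially the same route as the paper: use $\gdim\pi\le 2$ to kill the $H_4(\pi;\bbZ)$ invariant, take a minimal $M_0$ from Lemma~\ref{lem:minimal}, and adjust by a simply-connected $N$ to match signature and $KS$, then invoke the preceding stable-classification lemma. The only cosmetic difference is that the paper uses the Chern manifold $\CH$ rather than the $E_8$-manifold to change $KS$; both choices work.
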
 
\begin{proof} Let $M_0$ denote a minimal smooth $4$-manifold constructed in Lemma~\ref{lem:minimal}. Adding copies of $\CP^2$, or the Chern manifold, with appropriate orientations we can arrange that  $M$ and $M_0 \Sharp N$ have the same signature and Kirby-Siebenmann invariant, and both have type~(I). By the above lemma and remark, they are stably homeomorphic.
\end{proof}

The following proves the first part of Theorem B:

\begin{corollary} \label{cor:free}
If $M$ is a closed, oriented  $4$-manifold  with geometrically 2-dimensional fundamental group $\pi$, then $\pi_2(M)^\dag = \pi_2(M)/R(s_M)$ is a finitely-generated stably-free $\Zpi$-module, where $R(s_M)$ is the radical of the intersection form $s_M$ on $\pi_2(M)$.
\end{corollary}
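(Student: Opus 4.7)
The plan is to reduce everything to the type~(I) stable classification already established, and then to make the structure of $\pi_2$ for a minimal-plus-simply-connected connected sum completely transparent.

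First, I would arrange that $M$ has type~(I). If not, I replace $M$ by $M' := M \Sharp \CP^2$. This does not alter $\pi_1$, and a standard computation of $\pi_2$ of a connected sum with a simply-connected manifold gives $\pi_2(M') \cong \pi_2(M) \oplus \Zpi$, where the extra summand is generated by (the $\pi$-translates of) the $\CP^1 \subset \CP^2$ and supports a unimodular rank-one form. Consequently the radical does not change, $R(s_{M'}) = R(s_M)$, and $\pi_2(M')^\dag \cong \pi_2(M)^\dag \oplus \Zpi$. Since a direct summand of a stably-free module is stably-free, and conversely stabilizing a stably-free module by $\Zpi$ keeps it stably-free, it is enough to handle $M'$; so I assume $M$ itself has type~(I).

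Now apply the preceding corollary: since $\gdim\pi \leq 2$ forces $H_4(\pi;\bbZ) = 0$, the hypothesis $c_*[M_1] = c_*[M_2]$ in the stable classification is automatic, and $M$ is stably homeomorphic to $M_0 \Sharp N$ for a minimal $4$-manifold $M_0$ (produced by Lemma~\ref{lem:minimal}) and a closed simply-connected $4$-manifold $N$. The universal cover of $M_0 \Sharp N$ is $\widetilde{M_0}$ with one copy of $N$ minus an open disc attached along each of the $|\pi|$ lifts of the connect-sum sphere, so that
\[
\pi_2(M_0 \Sharp N) \;\cong\; \pi_2(M_0) \;\oplus\; \bigl(\Zpi \otimes_\bbZ H_2(N;\bbZ)\bigr).
\]
Because $M_0$ is minimal, $s_{M_0} \equiv 0$ and $\pi_2(M_0) = R(\pi)$; on the second summand the equivariant form is $\Zpi \otimes_\bbZ s_N$, which is non-singular since $s_N$ is non-singular over $\bbZ$ (classes in different deck-translates of $N$ live in disjoint regions and so pair trivially). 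Therefore the radical of $s_{M_0 \Sharp N}$ is exactly $R(\pi)$, and
\[
\pi_2(M_0 \Sharp N)^\dag \;\cong\; \Zpi \otimes_\bbZ H_2(N;\bbZ)
\]
is a finitely generated free $\Zpi$-module.

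Finally I would account for the stabilizations by $S^2 \times S^2$ used in the stable homeomorphism. Each such connected sum adds a hyperbolic summand $\Zpi^2$ to the reduced $\pi_2$ and leaves the radical unchanged, by the same universal-cover computation. Thus a stable homeomorphism $M \Sharp r(S^2\times S^2) \cong M_0 \Sharp N \Sharp s(S^2\times S^2)$ yields
\[
\pi_2(M)^\dag \oplus \Zpi^{2r} \;\cong\; \bigl(\Zpi \otimes_\bbZ H_2(N;\bbZ)\bigr) \oplus \Zpi^{2s},
\]
and the right-hand side is finitely generated free, so $\pi_2(M)^\dag$ is finitely generated stably-free, as required. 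The only subtle point is verifying that the radical is genuinely preserved under the connected-sum operations involved, which is exactly the content of the universal-cover calculation above; everything else is bookkeeping.
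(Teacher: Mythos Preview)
Your proof is correct and follows essentially the same route as the paper: reduce to type~(I) by adding $\CP^2$, invoke the stable classification to reach $M_0 \Sharp N$, and then read off $\pi_2^\dag$ as free (hence stably-free after undoing the $S^2\times S^2$ stabilizations and the $\CP^2$). The paper compresses your last three paragraphs into a single reference to the split exact sequence~(\ref{univcoeff}), but the content is the same. One small caution: the sentence ``a direct summand of a stably-free module is stably-free'' is false in general (it is only projective); what you actually use, and what is true, is that if $P \oplus \Zpi$ is stably-free then so is $P$, since the complement is free.
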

\begin{proof} If $M$ is any closed $4$-manifold with $\pi_1(M) = \pi$, we can form $M \Sharp \CP^2$ to obtain type~(I). Therefore, $M  \Sharp \CP^2$ is stably homeomorphic to a manifold of the form $M_0 \Sharp N$, where $N$ is simply-connected and $M_0$ is minimal, i.e. $\pi_2(M_0) \cong R(s_{M_0})$. The result now follows from the exact sequence~(\ref{univcoeff}), which is split short exact in this case.
\end{proof}

\begin{lemma}\label{stable_bordism} For $w_2$-type $w\in H^2(\pi;\cy 2)$, with $\gdim\pi\leq 2$, the bordism groups of the normal $1$-type are given by
$$\Omega_4(B(\pi, w)) \cong  8\bbZ  \oplus H_2(\pi;\cy 2).
$$
The first invariant is the signature (always divisble by 8) and modulo the action of the automorphism group of $\pi$ on the second factor, this gives the stable homeomorphism classification of closed oriented manifolds with fundamental group $\pi$ and $w_2$-type $w$.
\end{lemma}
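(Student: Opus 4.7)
The normal $1$-type $B(\pi,w)$ is defined, in exact analogy with Lemma~\ref{lem:reduced}, as the homotopy pullback of $w\colon K(\pi,1)\to K(\cy 2, 2)$ along the universal $w_2\colon BSTOP \to K(\cy 2, 2)$; its fiber over $K(\pi,1)$ is $BTOPSPIN$. I would compute $\Omega_4(B(\pi,w))$ via the James spectral sequence \cite{teichner1}, with
\[
E^2_{p,q} = H_p(\pi;\Omega_q^{TopSpin}(\ast)).
\]
The input values are $\Omega_0^{TopSpin}=\bbZ$, $\Omega_1^{TopSpin}=\Omega_2^{TopSpin}=\cy 2$, $\Omega_3^{TopSpin}=0$, and $\Omega_4^{TopSpin}=\bbZ$ (generated by Freedman's $E_8$ manifold, whose signature is~$8$; note that $KS \equiv \sign/8 \ (\Mod 2)$ for spin manifolds, so there is no additional $\cy 2$ summand). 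Because $\gdim\pi\leq 2$ forces $H_p(\pi;-)=0$ for $p\geq 3$, and the possible twisting by $w$ on $\cy 2$ or oriented $\bbZ$ coefficients is necessarily trivial, the only potentially nonzero entries in total degree $4$ are $E^2_{0,4}\cong\bbZ$ and $E^2_{2,2}\cong H_2(\pi;\cy 2)$. Every differential touching these two positions has source or target in a column $p\geq 3$ or passes through $\Omega_3^{TopSpin}=0$, so both entries are permanent cycles.

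The associated skeletal filtration then gives a short exact sequence
\[
0\to\bbZ\to\Omega_4(B(\pi,w))\to H_2(\pi;\cy 2)\to 0,
\]
which is split by the signature homomorphism: the generator of $E^\infty_{0,4}$ is represented by the $E_8$ manifold, so $\sign/8$ retracts the inclusion onto $\bbZ$. This yields $\Omega_4(B(\pi,w))\cong 8\bbZ\oplus H_2(\pi;\cy 2)$, with signature as the first projection. To realize every bordism class by a closed oriented $4$-manifold with fundamental group $\pi$ and $w_2$-type $w$, the signature is adjusted by connected sums with copies of the $E_8$ manifold (which preserve the $w_2$-type), while each element of $H_2(\pi;\cy 2)$ is realized starting from a minimal model $M(\cP,w)$ of Lemma~\ref{lem:minimal} and modifying it by internal connect-sums along embedded surfaces representing classes in $H_2(X(\cP);\cy 2)\cong H_2(\pi;\cy 2)$.

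For the stable classification statement I would invoke Kreck's theorem \cite[p.~711]{kreck3}: two closed $4$-manifolds with the same normal $1$-type are stably homeomorphic if and only if they admit $B$-bordant normal $1$-smoothings in it. The identification of $M$'s normal $1$-type with $B(\pi,w)$ depends on a choice of isomorphism $\pi_1(M)\cong\pi$ carrying $w_2(M)$ to $w$, and distinct choices differ by the action of $\Aut(\pi)$, which is precisely the stated action on the second factor $H_2(\pi;\cy 2)$. The main obstacle, I expect, is the realization step: one must verify that every class in $H_2(\pi;\cy 2)$ arises as the filtration-$2$ invariant of some closed $4$-manifold with the fixed fundamental group and $w_2$-type, which requires explicit geometric constructions on top of the minimal models of Section~\ref{sec:minimal}.
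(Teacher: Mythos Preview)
Your approach is essentially the paper's: identify $B(\pi,w)$ as the pullback of $w$ along $w_2$, run the James spectral sequence, and use $\gdim\pi\le 2$ and $\Omega_3^{TopSpin}=0$ to see that $E^2_{0,4}\cong\bbZ$ and $E^2_{2,2}\cong H_2(\pi;\cy 2)$ are permanent cycles.

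There is, however, a genuine gap in your splitting argument. You assert that $\sign/8$ retracts $\Omega_4(B(\pi,w))$ onto the $\bbZ$ subgroup, but this presupposes that the signature is divisible by $8$ on \emph{all} of $\Omega_4(B(\pi,w))$, not just on the filtration subgroup $F_{0,4}$ generated by the $E_8$ manifold. The spectral sequence alone only gives you $2x\in F_{0,4}$ for every $x$, hence $\sign(x)\in 4\bbZ$; it does not rule out an extension in which some class has signature~$4$. The missing input is precisely the theorem of \cite{teichner1} that a closed oriented $4$-manifold with spin universal cover (equivalently, with $w_2$-type in $H^2(\pi;\cy 2)$) has signature divisible by~$8$. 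The paper invokes exactly this at the end of its proof, and you should too.

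Your worry about realization is misplaced. Once the bordism group is computed, Kreck's theory \cite[p.~711]{kreck3} already gives the stable classification: any class in $\Omega_4(B(\pi,w))$ can be represented, after surgery below the middle dimension, by a manifold whose reference map to $B(\pi,w)$ is a $2$-equivalence, hence by a manifold with fundamental group $\pi$ and the prescribed $w_2$-type. No separate geometric construction is needed, and the paper does not provide one.
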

\begin{proof} 
Just as in Lemma~\ref{lem:reduced}, one proves that the normal 1-type is given by the homotopy pullback
$$\xymatrix@!C
{BTOPSPIN\ar[r]\ar@{=}[d]&
B(\pi,w)\ar[r]\ar[d]^{\xi} &
\Bpi \ar[d]^{w}\\ 
BTOPSPIN\ar[r] & BSTOP\ar[r]^{w_2} & \ K(\cy 2,2)
}$$
where $w_2\colon BSTOP \to K(\cy 2, 2)$ denotes the universal second Stiefel-Whitney class. The corresponding normal bordism groups will be denoted by $\Omega_4(\pi,w)$. They can be calculated via the James spectral sequence \cite{teichner1}, with $E_2$-term
\[
E^2_{p,q} = H_p(\pi; \Omega_q^{TopSpin}(\ast)).
\]
Recall that $\Omega_q^{TopSpin}(\ast) = \bbZ, \cy 2, \cy 2, 0,  \bbZ$, in the range $0 \leq q\leq 4$. Since $\pi$ is 2-dimensional, all groups with $p>2$ are zero. In particular, 
there are no differentials affecting the line $p+q=4$. The bordism class represented by the $E8$-manifold accounts for the term $E^2_{0,4} = 8\bbZ$. The term $E^2_{2,2} = H_2(\pi;\cy 2)$ and there are no other terms on the line $p+q=4$. It follows from \cite{teichner1} that the signature of any $4$-manifold with this normal 1-type is divisible by $8$ and hence the result follows.
\end{proof}

\section{Detecting $B$-bordism classes}\label{sec:Bbordism}

In this section we fix a geometrically 2-dimensional group $\pi$ that is going to be the fundamental group of all our $4$-manifolds below. We want to compute the bordism group $\Omega_4(B(P,w))$ using the James spectral sequence. For this we 
compute the homology of $P$ from the Leray-Serre spectral sequence of the fibration
\[
K(A, 2) \to P \xrightarrow{u} \Bpi,
\]
where  $A:=\pi_2(P)$ is a finitely-generated, stably-free $\La$-module by Corollary~\ref{cor:free}.  Hence $A$ is a countably-generated free abelian group. Let $\Gamma(A) = H_4(\widetilde P;\bbZ)$ denote Whitehead's $\Gamma$-functor (see \cite[\S~5]{whitehead1}), and for any $\La$-module $L$ we will use the notation $ L_\pi:= L\otimes_\La \bbZ $ for the cofixed set of $L$. 
\begin{lemma}\label{lem:homology}
For $i = 0, 1, 2, 3, 4$, the homology groups $H_i(P; \bbZ)$ are given by
$$\bbZ ,\ H_1(\pi;  \bbZ), \ H_2(\pi; \bbZ) \oplus A_\pi, \ 0, \ \Gamma(A)_\pi$$
\end{lemma}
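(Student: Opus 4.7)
The plan is to run the Leray--Serre spectral sequence
\[ E^2_{p,q} = H_p(\pi;\, H_q(K(A,2);\bbZ)) \;\Longrightarrow\; H_{p+q}(P;\bbZ) \]
of the fibration $K(A,2) \to P \xrightarrow{u} \Bpi$, and then handle one extension problem. Since $A$ is a stably-free $\La$-module (Corollary~\ref{cor:free}), it is a summand of some $\La^n$ and therefore free as an abelian group, so the classical calculation of $H_*(K(A,2);\bbZ)$ applies: in degrees $0$ through $4$ it gives $\bbZ,\,0,\,A,\,0,\,\Gamma(A)$, with the $\pi$-action on $H_2 = A$ coming from the given $\La$-module structure and the one on $H_4 = \Gamma(A)$ induced functorially.

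Next I would use two vanishing inputs to simplify the $E^2$-page along the line $p+q\le 4$. First, $\gdim\pi\le 2$ gives $H_p(\pi;-)=0$ for $p\ge 3$, in particular $H_3(\pi;\bbZ) = H_4(\pi;\bbZ) = 0$. Second, since $A$ is stably free and $H_i(\pi;\La)=0$ for $i\ge 1$ by Shapiro's lemma, the direct-sum decomposition $A\oplus\La^m\cong\La^n$ forces $H_i(\pi;A)=0$ for all $i\ge 1$. Combined with $H_1(K(A,2))=H_3(K(A,2))=0$, the only nonzero entries of $E^2$ in total degree $\le 4$ are
\[ E^2_{0,0}=\bbZ,\quad E^2_{1,0}=H_1(\pi;\bbZ),\quad E^2_{2,0}=H_2(\pi;\bbZ),\quad E^2_{0,2}=A_\pi,\quad E^2_{0,4}=\Gamma(A)_\pi. \]
Any differential into or out of these entries has its other end in a zero group (either above the $p=2$ line or in the vanished rows $q=1,3$ or in $H_i(\pi;A)$ with $i\ge 1$), so $E^\infty=E^2$ on this range. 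This immediately gives the claimed values in degrees $0,1,3,4$, and leaves only a short exact sequence
\[ 0 \to A_\pi \to H_2(P;\bbZ) \to H_2(\pi;\bbZ) \to 0 \]
to resolve.

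To split this extension I would use that the $k$-invariant of $P$ lies in $H^3(\pi;A)$, which vanishes because $\gdim\pi\le 2$. The vanishing $k$-invariant is exactly the obstruction to constructing a section $s\colon \Bpi\to P$ of the projection $u$; pick one. Then $s_*\colon H_2(\pi;\bbZ)\to H_2(P;\bbZ)$ is a right inverse to the edge map $u_*\colon H_2(P;\bbZ)\to H_2(\pi;\bbZ)$ (whose kernel is $A_\pi$), so the sequence splits and $H_2(P;\bbZ)\cong H_2(\pi;\bbZ)\oplus A_\pi$.

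The only real subtlety is the extension problem for $H_2(P)$; everything else is a bookkeeping exercise in the spectral sequence once the two vanishing inputs (geometric dimension of $\pi$, and Shapiro plus stable freeness of $A$) are in place. If one preferred to avoid the section argument, the same splitting could be read off from the existence of a map $\Bpi\to P$ realizing the identity on $\pi_1$, which again exists iff $H^3(\pi;A)=0$; so the step is genuinely unavoidable but painless under the standing hypothesis $\gdim\pi\le 2$.
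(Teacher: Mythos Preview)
Your argument is correct and follows essentially the same route as the paper: run the Leray--Serre spectral sequence for $K(A,2)\to P\to K(\pi,1)$, use stable freeness of $A$ to kill $H_i(\pi;A)$ for $i\ge 1$, and split the $H_2$ extension via the section coming from the vanishing $k$-invariant in $H^3(\pi;A)$. The paper's own proof is a two-line sketch that records precisely these vanishing inputs (it additionally notes that $\Gamma(A)$ is stably free, hence $H_i(\pi;\Gamma(A))=0$ for $i>0$, but in the range $p+q\le 4$ this is not actually needed, as you implicitly observed) and leaves the spectral-sequence bookkeeping to the reader; you have simply filled in those details.
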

\begin{proof}
Since $A$ is a stably-free $\La$-module, so is $\Gamma(A)$ (compare
\cite[Lemma 2.2]{hk2}). This means that $H_i(\pi; A) = 0 $ and $H_i(\pi; \Gamma(A)) =0$ for $i > 0$. Further details will be left to the reader. 
\end{proof}

In order to compute the  differentials in the James spectral sequence below, we will need the following result. Let $u \colon P \to K(\pi,1)$ denote the classifying map of the universal covering.
\begin{lemma} \label{lem:Sq}
For any class $w\in H^2(P;\cy 2)$, 
the map $Sq^2_w\colon H^2(P;\cy 2) \to H^4(P;\cy 2)$, given by $Sq^2_w(x) = x\cup x + x \cup w$, has kernel generated by $\{w, u^*H^2(\pi;\cy 2)\}$.
\end{lemma}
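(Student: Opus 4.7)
My plan is to reduce the kernel computation to the cohomology of the fibre $\widetilde P \simeq K(A,2)$ of $u\colon P\to K(\pi,1)$, where $A=\pi_2(P)$ is stably $\La$-free by Corollary~\ref{cor:free}. I will analyse the Serre spectral sequence of this fibration with $\cy 2$ coefficients, and exploit the fact that $H^*(K(A,2);\cy 2)$ is a polynomial algebra, hence an integral domain.

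First I verify that both claimed generators lie in the kernel. The map $Sq^2_w$ is additive since squaring is additive in characteristic two, so it suffices to check each generator individually. Clearly $Sq^2_w(w)=w^2+w\cup w=0$. For $y\in H^2(\pi;\cy 2)$,
\[ Sq^2_w(u^*y) = u^*(y^2) + u^*(y)\cup w, \]
and $u^*(y^2)=0$ since $H^4(\pi;\cy 2)=0$ by $\gdim\pi\le 2$. By multiplicativity of the Serre filtration, the remaining term $u^*(y)\cup w$ lies in $F^2 H^4(P;\cy 2)$, whose associated graded is $\bigoplus_{p\ge 2}E_\infty^{p,4-p}$. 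Here $E_\infty^{4,0}$ vanishes by $\gdim\pi\le 2$, $E_\infty^{3,1}$ vanishes because the simply-connected fibre has no $H^1$, and the crucial $E_2^{2,2}=H^2(\pi;\Hom(A,\cy 2))$ vanishes by Shapiro's lemma: since $A$ is stably $\La$-free, $\Hom_\bbZ(A,\cy 2)$ is a stable summand of $\Hom_\bbZ(\La,\cy 2)$, which is coinduced from the trivial subgroup and therefore has no higher $\pi$-cohomology.

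For the converse inclusion I pass to the fibre via the commutative square
\[ \xymatrix{H^2(P;\cy 2) \ar[r]^{Sq^2_w} \ar[d]_{\iota^*} & H^4(P;\cy 2) \ar[d]^{\iota^*} \\ H^2(\widetilde P;\cy 2)^\pi \ar[r]^{Sq^2_{\iota^*w}} & H^4(\widetilde P;\cy 2)^\pi} \]
The right vertical $\iota^*$ is an isomorphism because the same spectral sequence argument shows $E_\infty^{p,4-p}=0$ for every $p\ge 1$. The left vertical $\iota^*$ is surjective with kernel $u^*H^2(\pi;\cy 2)$, using the sequence~(\ref{eq:H^2}) together with $H^3(\pi;\cy 2)=0$. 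In the integral domain $H^*(\widetilde P;\cy 2)$, the relation $z(z+\iota^*w)=0$ forces $z=0$ or $z=\iota^*w$, so the bottom kernel equals $\cy 2\cdot \iota^*w$. Pulling back along $\iota^*$ yields
\[ \ker Sq^2_w = (\iota^*)^{-1}(\cy 2 \cdot \iota^*w) = \cy 2\cdot w + u^*H^2(\pi;\cy 2), \]
exactly the subgroup generated by $\{w, u^*H^2(\pi;\cy 2)\}$.

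The main obstacle is the spectral-sequence vanishing $H^2(\pi;\Hom(A,\cy 2))=0$; both inclusions depend on it. The decisive input is the stable $\La$-freeness of $A=\pi_2(M)^\dag$ from Corollary~\ref{cor:free}, which upgrades the merely finitely-generated setting to one where Shapiro's lemma applies. This is precisely where the standing hypothesis $\gdim\pi\le 2$ enters essentially.
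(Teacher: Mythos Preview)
Your proof is correct and follows essentially the same route as the paper: both reduce to the fibre $\widetilde P\simeq K(A,2)$ via the commutative square comparing $Sq^2_w$ on $P$ with $Sq^2_{\tilde w}$ on $\widetilde P$, and both hinge on the vanishing $H^2(\pi;H^2(\widetilde P;\cy 2))=0$ coming from stable $\La$-freeness of $A$. The only notable difference is that you identify the kernel on the fibre by invoking that $H^*(K(A,2);\cy 2)$ is a polynomial algebra (hence an integral domain), whereas the paper uses the description of $K(A,2)$ as a colimit of finite products of $\CP^\infty$; your spectral-sequence bookkeeping is also more explicit than the paper's, but the argument is the same.
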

\begin{proof} Let $p\colon \widetilde P \to P$ denote the projection of the universal covering and $\tilde w = p^*(w)$. 
If $\tilde w \neq 0$, then $u^*(y) \cup w = 0$, for all $y \in H^2(\pi;\cy 2)$, since $H^2(\pi; H^2(\widetilde P;\cy 2)) =0$ in the $(2,2)$ position of the spectral sequence for the  universal covering $\widetilde P \to P$. 
We therefore have the commutative diagram 
$$\xymatrix{0 \ar[r]&H^2(\pi;\cy 2) \ar[r]^{u^*}\ar[d]& H^2(P;\cy 2)\ar[r] \ar[d]^{Sq^2_w}& H^2(\widetilde P;\cy 2)\ar[d]^{Sq^2_{\tilde w}}\cr
&0 \ar[r]& H^4(P;\cy 2)\ar[r] & H^4(\widetilde P;\cy 2)
}$$
since $H^4(\pi;\cy 2) =0$. 

The map induced by $Sq^2_{\tilde w}$ on $H^2(\widetilde P;\cy 2)$ has kernal $\la \tilde w\ra$ since the space $\widetilde P = K(A,2)$  is the Eilenberg-Maclane  space of a countable direct sum of copies of $\bbZ$. Its homotopy type is the colimit
$$K(A,2) = K\big (\bigoplus^\infty \bbZ, 2\big ) \simeq \colim_{k \to \infty}\bigr (\prod_{i=1}^{k} \CP^{\infty}\bigr )$$
 of products of finitely many copies of $\CP^{\infty}$.
\end{proof}

\begin{lemma}\label{bordismtwotwo} Let $(P, w)$ be a  reduced  $w_2$-type with $\gdim \pi_1(P) \leq 2$. Then there is an injection  
$$ \Omega_4(B(P,w))\subseteq  \bbZ \oplus \cy 2 \oplus H_2(\pi;\cy 2)\oplus H_4(P;\bbZ)$$
 detecting the bordism groups of the  reduced  normal $2$-type $B(P,w)$.
The invariants are the signature, the $KS$-invariant, an invariant in $H_2(\pi;\cy 2)$, and the fundamental class $c_*[M] \in H_4(P;\bbZ)$.
\end{lemma}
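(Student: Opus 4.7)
The plan is to apply the James spectral sequence associated to the fibration $B(P,w)\to P$, exactly as in Lemma~\ref{stable_bordism} but with $P$ in place of $\Bpi$. Its $E_2$-page is $E^2_{p,q}=H_p(P;\Omega_q^{TopSpin}(\ast))$, and since $\Omega_q^{TopSpin}(\ast)=\bbZ,\cy 2,\cy 2,0,\bbZ$ for $0\le q\le 4$, the line $p+q=4$ reads
\[
E^2_{0,4}=\bbZ,\quad E^2_{1,3}=0,\quad E^2_{2,2}=H_2(P;\cy 2),\quad E^2_{3,1}=H_3(P;\cy 2),\quad E^2_{4,0}=H_4(P;\bbZ).
\]
The first entry will detect the signature (divisible by $8$ via the $E_8$ generator), the last entry will detect the fundamental class $c_*[M]\in H_4(P;\bbZ)$, and the strategy is to show that $E^\infty_{3,1}=0$ and $E^\infty_{2,2}\subseteq H_2(\pi;\cy 2)$. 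Adjoining the Kirby-Siebenmann invariant $KS\in\cy 2$, defined topologically for every closed $4$-manifold, then yields the claimed injection.

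First I would compute the relevant homology of $P$ via the Leray-Serre spectral sequence of the fibration
\[
K(A,2)\longrightarrow P \xrightarrow{u} \Bpi,
\]
where $A=\pi_2(P)$ is finitely generated and stably-free by Corollary~\ref{cor:free}. Since $\gdim\pi\le 2$ and $H_*(K(A,2);\cy 2)$ is a polynomial algebra concentrated in even degrees on generators of $A\otimes\cy 2$, one obtains
\[
H_2(P;\cy 2)\cong H_2(\pi;\cy 2)\oplus A_\pi\otimes\cy 2,\qquad H_3(P;\cy 2)\cong H_1(\pi;A\otimes\cy 2),
\]
while $H_4(P;\bbZ)\cong\Gamma(A)_\pi$ has already been identified in Lemma~\ref{lem:homology}.

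The central step is to control the differentials, which are governed by the twisted Steenrod square $Sq^2_w$ studied in Lemma~\ref{lem:Sq}. I would argue two things: (a) the incoming differential $d_2\colon E^2_{4,1}=H_4(P;\cy 2)\to E^2_{2,2}=H_2(P;\cy 2)$ hits precisely the ``spherical'' summand $A_\pi\otimes\cy 2$, leaving $E^\infty_{2,2}\subseteq H_2(\pi;\cy 2)$; and (b) the outgoing differential $d_2\colon E^2_{3,1}=H_3(P;\cy 2)\to E^2_{1,2}=H_1(P;\cy 2)=H_1(\pi;\cy 2)$ (or a suitable higher differential) is injective, so that $E^\infty_{3,1}=0$. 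In both cases the mechanism is that Lemma~\ref{lem:Sq} identifies the kernel of $Sq^2_w$ on $H^2(P;\cy 2)$ as lying in $u^*H^2(\pi;\cy 2)$ together with the class $w$ itself; dually, $Sq^2_w$ is non-zero on every class that is spherical modulo the pullback from $\pi$, which translates to the stated statements about the $d_2$-differentials. For degree reasons no differentials can enter or leave the positions $(0,4)$ or $(4,0)$ non-trivially, so these entries survive unchanged.

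Putting the pieces together yields the injection $\Omega_4(B(P,w))\subseteq \bbZ \oplus \cy 2 \oplus H_2(\pi;\cy 2) \oplus H_4(P;\bbZ)$ detected by the four invariants $(\sigma(M), KS(M), \text{class in } H_2(\pi;\cy 2), c_*[M])$. The main technical obstacle will be step~(a): verifying that the $d_2$-differential into $(2,2)$ really does hit the full $A_\pi\otimes\cy 2$ summand. This reduces, via naturality of the James spectral sequence applied to the fiber inclusion $K(A,2)\hookrightarrow P$, to an explicit $Sq^2_w$-computation on degree-$2$ classes coming from $A$, for which the non-degeneracy statement of Lemma~\ref{lem:Sq} is the key input.
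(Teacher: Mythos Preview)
Your overall strategy is right, and for $w_2$-types that are \emph{not} of type~(I) it essentially matches the paper. But there is a genuine gap in step~(a) for type~(I), and this is exactly why the paper's proof splits into two cases.

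By Lemma~\ref{lem:Sq}, the kernel of $Sq^2_w$ on $H^2(P;\cy 2)$ is generated by $\{w,\,u^*H^2(\pi;\cy 2)\}$. When $w$ lies in the image of $u^*$ (types~(II),~(III)) this kernel is just $u^*H^2(\pi;\cy 2)$ and your dual statement is correct: the cokernel of $d_2\colon E^2_{4,1}\to E^2_{2,2}$ is $H_2(\pi;\cy 2)$. But in type~(I) the class $w$ is \emph{not} in $u^*H^2(\pi;\cy 2)$, so the kernel is strictly larger by one $\cy 2$. Dually, $d_2$ does \emph{not} hit all of the spherical summand $A_\pi\otimes\cy 2$; the $E_3$-term at $(2,2)$ is $H_2(\pi;\cy 2)\oplus\cy 2\,[\alpha]$ for a spherical class $\alpha$ with $\langle w,\alpha\rangle\neq 0$. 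Your spectral-sequence argument alone then only gives an injection into $\bbZ\oplus H_2(\pi;\cy 2)\oplus\cy 2\oplus H_4(P;\bbZ)$, and ``adjoining $KS$'' as an extra independent invariant does not help unless you show it actually detects this extra $\cy 2\,[\alpha]$. That requires a geometric argument: the paper constructs an explicit bordism class $N=\CH\,\Sharp\,(-\CP^2)$ with a map $N\to S^2\to P$ representing $\alpha$, having $\sign=0$, trivial fundamental class, and $KS=1$. This step is missing from your outline and is the crux of the type~(I) case.

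Two smaller points. First, your claim that no differential leaves $(4,0)$ is false: $d_2\colon E^2_{4,0}=H_4(P;\bbZ)\to E^2_{2,1}=H_2(P;\cy 2)$ is reduction mod~$2$ composed with the dual of $Sq^2_w$ and is generally nonzero. This is harmless for the \emph{injection} (since $E^\infty_{4,0}\subseteq E^2_{4,0}$ automatically), but you should not assert that $(4,0)$ survives unchanged. Second, your step~(b) is unnecessary: since $A$ is stably $\La$-free, $A\otimes\cy 2$ is stably free over $(\cy 2)[\pi]$, so $H_1(\pi;A\otimes\cy 2)=0$ and $E^2_{3,1}=H_3(P;\cy 2)=0$ already.
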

Later we will define and investigate the bordism invariant in $H_2(\pi;\cy 2)$.
\begin{proof}
The argument splits naturally into two cases, depending on whether $(B(P,w))$ is of type~(I) or not. In the latter case, the  reduced  normal $2$-type pulls back from the normal 1-type. Since the $k$-invariant for $P$ lies  in $H^3(\pi;\pi_2(P))=0$, the fibration $P \to K(\pi, 1)$ has a section which gives a direct sum splitting  of the bordism groups. The result then follows almost directly from the homology computation in Lemma~\ref{lem:homology}. The James spectral sequence has  $E_2$-term
\[
E^2_{p,q} = H_p(P; \Omega_q^{TopSpin}(\ast)),
\]
and the only subtlety is the $d_2$-differentials that start in the $E_{4,i}$ spots, where $i=0,1$. For $i=1$, it is given by the dual of the map in Lemma~\ref{lem:Sq} above. This Lemma implies that the cokernel of $d_2$ is $H_2(\pi;\cy 2)$. For $i=0$, one needs to compose in addition with the reduction map from $H_4$ with integral to $\cy 2$ coefficients. The resulting $d_2$-differential has a kernel inside $H_4(P;\bbZ)$ which is exactly the image of the inclusion described in the statement of our lemma.

Now assume that $(P,w)$ is of type~(I). By  Lemma \ref{lem:Sq} the $E_3$-term of the James spectral sequence has at the $(2,2)$ position $H_2(\pi;\mathbb Z/2) \oplus \mathbb Z/2\, [\alpha]$, where $\alpha \in H_2(P;\mathbb Z/2)$ is a spherical class such that $ \langle w,\alpha \rangle  \ne 0$.  We claim that the component in $\mathbb Z/2 [ \alpha]$ is determined by the Kirby-Siebenmann invariant. It is enough to find a bordism class with signature zero and trivial image of the fundamental class which represents $\alpha$ and has non-trivial Kirby-Siebenmann invariant. 
 For this we consider $N:= \CH \Sharp  (- \CP^2)$, where $\CH= \ast\CP^2$ is the Chern manifold with non-trivial Kirby-Siebenmann invariant \cite{freedman-quinn1}. The manifold $N$ is homotopy equivalent to $ \CP^2 \Sharp  (-\CP^2)$ which is an $S^2$-bundle over $S^2$. The bundle projection translates to a map $g\colon N \to S^2$ which sends the classes of square $\pm 1$ in $H_2(N)$ to generators in $H_2(S^2)$. 

  Since $\alpha$ is a spherical class, there is a map $h\colon  S^2 \to P$ representing $\alpha$. The bordism class we are looking for is represented by $(N, hg)$. By construction the pullback of $w$ under this map is $w_2(N)$ and so it gives an element in our bordism group, which has the desired properties.
 \end{proof}

\begin{remark}\label{rem:comparison}
If $(P,w)$ is not of type~(I) then the above proof shows that the bordism groups of the  reduced  normal $2$-type $B(P,w)$ are detected by the natural map to the bordism groups of the normal 1-type $B(\pi,w)$ and the fundamental class $c_*[M] \in H_4(P;\bbZ)$. In particular, it follows from Lemma~\ref{stable_bordism} that the signature is divisible by~8 and the Kirby-Siebenmann invariant is determined by the other invariants. For type~(I) we proved that the signature can be any integer and the Kirby-Siebenmann invariant is independent from all other invariants. 
\end{remark}

Next we will show that the bordism invariant in $H_2(\pi;\cy 2)$  given in Lemma \ref{bordismtwotwo} is detected by the other invariants. This was used in the proof of Theorem~C. 

 Recall that a closed oriented $4$-manifold $M$ determines
 a reduced $3$-equivalence
$c\colon M \to P$ (see Definition~\ref{def:reduced}), where $(P,w)$ is the reduced  $w_2$-type $(P, w)$ of $M$ (see Lemma \ref{lem:w} and Definition \ref{def:wPpair}). Let $B(M)=B(P,w)$ denote the resulting reduced normal $2$-type of $M$.  
\begin{definition}
 We define the subset of \emph{normal structures} in $\Omega_4(B(M))$, denoted by $
 \Omega_4(B(M))_M$. It consists  of the normal bordism classes $(N,f)$ over $B(M)$,  with $f_*[N] = c_*[M]$, $\sign(N) = \sign(M)$,  and $KS(M) = KS(N)$. In other words it is the subset of the reduced bordism group, which is the fibre over $(c_*[M], \sign(M), KS(M))$ of the map to $H_4(P)\oplus \mathbb Z \oplus \mathbb Z/2$ given by the image of the fundamental class, the signature and the Kirby-Siebenmann invariant. 
 \qed
\end{definition}
We stress that $ \Omega_4(B(M))_M$ is a subset and not a subgroup. 
This subset is non-empty, since it contains $[M, \hat c]$.
Now we define a map
$$\theta\colon \Omega_4(B(M))_M \to L_4(\Zpi)$$
as follows: for any element $[N,f]\in \Omega_4(B(M))_M$, we do surgeries until  $f$ is $2$-connected and let 
$$V:= \ker(\pi_2(N) \to \pi_2(B(M)))$$
Since $f^*(w)=w_2(N)$, it follows that $w_2(\widetilde N)$ is zero when restricted to $V$. Moreover, $R(s_N) \subseteq V$ since the intersection form is non-singular on $\pi_2(B)$ by construction. 
\begin{lemma} The restriction of $s_N$ to $V$ induces a non-singular, even form $\lambda_{N,f}$ on $V^\dag = V/R(s_N)$, which is a finitely-generated stably free $\La$-module.
\end{lemma}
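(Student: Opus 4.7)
The plan is to analyze the short exact sequence of $\La$-modules
\[
0 \longrightarrow V^\dag \longrightarrow \pi_2(N)^\dag \longrightarrow \pi_2(M)^\dag \longrightarrow 0
\]
induced by $f_*$, which is well-defined thanks to $R(s_N)\subseteq V$ (noted just before the lemma) and exact because $f$ has been made $2$-connected. For the stable freeness conclusion, observe that by Corollary~\ref{cor:free} both $\pi_2(N)^\dag$ and $\pi_2(M)^\dag$ are finitely generated and stably free, hence projective. The sequence therefore splits, exhibiting $V^\dag$ as a direct summand of a stably free module with stably free complement; a routine cancellation argument (adding standard free modules to absorb stabilizations) shows $V^\dag$ is itself finitely generated and stably free.

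For evenness, the paragraph preceding the lemma already observes that $w_2(\widetilde N)$ vanishes when restricted to $V$. By the topological Wu formula, the identity coefficient of $s_N(x,x)$ equals the ordinary self-intersection of a lift $\tilde x\in H_2(\widetilde N;\bbZ)$, which is congruent mod~$2$ to $\langle w_2(\widetilde N),\tilde x\rangle$. Using $w_2(\widetilde N) = \tilde f^*(\tilde w)$ (the pullback of $f^*(w) = w_2(N)$ to universal covers) together with $\tilde f_*\tilde x = 0$ for $x\in V$, we get
\[
\langle w_2(\widetilde N),\tilde x\rangle \ = \ \langle \tilde w,\tilde f_*\tilde x\rangle \ = \ 0.
\]
Hence $s_N|_V$ is even, and since modifying $x$ by an element of $R(s_N)$ does not change $s_N(x,x)$, the induced form $\lambda_{N,f}$ on $V^\dag$ is even.

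The main obstacle will be non-singularity of $\lambda_{N,f}$. The approach is to recognize $V^\dag$ as the surgery-kernel module of the $2$-connected map $f$ to the reduced normal $2$-type, and to invoke the kernel form construction from Kreck's modified surgery theory (\cite[Prop.~8]{kreck3}). Concretely, Poincar\'e duality on the $4$-manifold $N$ identifies $\pi_2(N) \cong H^2(N;\La)$, and under this identification the submodule $V$ corresponds to a piece whose restriction form becomes non-degenerate after quotienting by the radical. Equivalently, one promotes the module splitting $\pi_2(N)^\dag \cong V^\dag \oplus \pi_2(M)^\dag$ to an orthogonal decomposition of hermitian forms by using the non-singular adjoint of $s_M^\dag$ to correct any chosen section of $\pi_2(N)^\dag \to \pi_2(M)^\dag$; once the orthogonal splitting $s_N^\dag \cong \lambda_{N,f}\perp s_M^\dag$ is established, non-singularity of the outer two forms forces non-singularity of $\lambda_{N,f}$. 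The subtle point is that this orthogonal splitting is not formal from the module data alone (as one sees from elementary counterexamples with abstract surjections of non-singular forms), but genuinely uses the $2$-connectedness of $f$ together with Poincar\'e duality on $N$ as in the surgery-kernel theorem.
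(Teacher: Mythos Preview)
Your argument for stable freeness and evenness is correct and essentially matches the paper's approach (the paper compresses the stably-free step into the single line ``we can split $V = R(\pi)\oplus A$, where $A$ is also a stably-free $\La$-module'', which unpacks to exactly your short exact sequence plus Corollary~\ref{cor:free}).

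For non-singularity, however, you have not identified the hypothesis that actually drives the surgery-kernel argument. You attribute the orthogonal splitting $s_N^\dag \cong \lambda_{N,f}\perp s_M^\dag$ to ``$2$-connectedness of $f$ together with Poincar\'e duality on $N$'', but these alone are not enough: $B(M)$ is not a Poincar\'e complex, so there is no automatic Umkehr map, and an arbitrary $\La$-linear section $\pi_2(M)^\dag \to \pi_2(N)^\dag$ has no reason to be form-preserving. The paper's proof pinpoints the missing ingredient: the fundamental class condition $j_*f_*[N] = c_*[M]$, which is part of the definition of the normal-structure subset $\Omega_4(B(M))_M$. This equality is the degree-one substitute in the present setting, and it is what lets the ``usual argument for surgery kernels'' \cite[Lemma~2.2]{wall-book} go through. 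Concretely, it guarantees that there is a section on which $s_N^\dag$ restricts to $s_M^\dag$, after which the non-singular summand $s_M^\dag$ can be split off orthogonally and the complement identified with $V^\dag$. Your final paragraph gestures at the right theorem but misattributes the key input; you should replace ``$2$-connectedness plus Poincar\'e duality'' with the fundamental class hypothesis.
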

\begin{proof} 
Since $\pi_2(B) \cong \pi_2(P) \cong \pi_2(M)^\dag$ is a stably-free $\La$-module, we can split $V = R(\pi)\oplus A$, where $A$ is also a
stably-free $\La$-module. Since $ j_*f_*[N] = c_*[M]$ it follows that the restriction of $s_N$ to $A \cong V^\dag$ is non-singular (this is the usual argument for surgery kernels \cite[Lemma 2.2]{wall-book}).
\end{proof}

\begin{remark}
 If $[N,f] \in \Omega_4(B(M))_M$ and $f$ is a reduced $3$-equivalence, then $\sMdagg{N} \cong  \sMdag$. The reason is that the image of the fundamental class of a reduced $3$-equivalence determines the reduced intersection form.
\end{remark}

\begin{definition}\label{def:theta}
We define the map
$\theta\colon \Omega_4(B(M))_M \to L_4(\Zpi)$ by setting $\theta(N,f) = [\lambda_{N,f}] \in L_4(\Zpi)$.
\end{definition}
\begin{lemma}
The map $\theta$ is well-defined.
\end{lemma}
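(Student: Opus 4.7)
To show $\theta$ is well-defined I need to rule out dependence on two choices: the surgeries used to make $f$ into a $2$-equivalence, and the representative $(N,f)$ within its normal bordism class in $\Omega_4(B(M))_M$. First I would confirm that for a fixed $(N,f)$ with $f$ already $2$-connected, the form $\lambda_{N,f}$ genuinely represents a class in $L_4(\Zpi)$: the preceding lemma already provides non-singularity on the stably free module $V^\dag$, and the evenness of $\lambda_{N,f}$ follows because $f^*(w)=w_2(N)$ forces $w_2(\widetilde N)|_V=0$, which supplies the quadratic refinement of the hermitian intersection form on the surgery kernel in the usual Wall-theoretic manner.

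Next I would verify independence of the choice of surgeries producing $2$-connectivity of $f$. Any two such collections differ by finitely many intermediate $1$- and $2$-surgeries that preserve $f$ being a $2$-equivalence; each such surgery either adjoins a hyperbolic $\La$-plane to $V$ (when one introduces a new element together with its geometric dual) or kills such a pair, and after passing to the quotient $V^\dag$ by the radical $R(s_N)\cong R(\pi)$ from Corollary~\ref{cor:rad}, the form $\lambda_{N,f}$ changes only by orthogonal sum with a hyperbolic form. Since $L_4(\Zpi)$ is defined modulo hyperbolic summands, the class $[\lambda_{N,f}]$ is unchanged.

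The main step is independence of the bordism representative. Given a normal bordism $(W,F)$ between $(N_1,f_1)$ and $(N_2,f_2)$, both with $f_i$ a $2$-equivalence, I would perform surgeries on $W$ (rel boundary) below the middle dimension to arrange $F\colon W\to B(M)$ to be a $2$-equivalence; this is possible because $B(M)\to BSTOP$ is $3$-coconnected and $\gdim\pi\leq 2$ keeps the homology of $B(M)$ well-behaved. The relative surgery kernel $K_3(W,\partial W;\La)$ is then, by Poincar\'e--Lefschetz duality for $(W,\partial W)$ together with the universal-coefficient sequence (\ref{univcoeff}), a stably free $\La$-module whose quotient by the radical supplies a Lagrangian inside the orthogonal sum $\lambda_{N_1,f_1}\perp(-\lambda_{N_2,f_2})$. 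Existence of this Lagrangian is precisely the relation $[\lambda_{N_1,f_1}]=[\lambda_{N_2,f_2}]$ in $L_4(\Zpi)$.

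The principal technical obstacle is the clean separation of the radical from the surgery kernel on the $5$-dimensional bordism, so that reducing mod $R(\pi)$ yields a stably free module on which the boundary pairing gives an honest sub-Lagrangian. Here the inputs from the previous sections are essential: Corollary~\ref{cor:free} gives stable freeness of $\pi_2^\dag$, and Corollary~\ref{cor:rad} identifies the radical intrinsically with $H^2(\pi;\La)$, so that it appears consistently on both boundary components of $W$ and can be factored out globally. Once this algebraic setup is in place the standard $L_4$-cobordism argument goes through, completing the proof that $\theta$ is well-defined.
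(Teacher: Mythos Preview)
Your approach differs substantially from the paper's. The paper gives a two-line proof using Kreck's stable classification result \cite[Cor.~3]{kreck3}: if $[N_1,f_1]=[N_2,f_2]$ in $\Omega_4(B)$, then $N_1$ and $N_2$ become homeomorphic \emph{over $B$} after connected sum with copies of $S^2\times S^2$. A homeomorphism over $B$ identifies the kernels $V_1$ and $V_2$ together with their intersection forms, so $\lambda_{V_1}$ and $\lambda_{V_2}$ become isometric after adding hyperbolic forms on free $\La$-modules, hence represent the same class in $L_4(\Zpi)$. This single stroke handles both of your concerns (choice of surgeries and choice of bordism representative) at once, and entirely avoids analyzing the $5$-dimensional bordism.

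Your direct Lagrangian argument is in the right spirit, but the key step---that $K_3(W,\partial W;\La)$ modulo radical supplies a Lagrangian in $\lambda_{N_1,f_1}\perp(-\lambda_{N_2,f_2})$---needs more than you provide. In Wall's classical setup this follows from Poincar\'e--Lefschetz duality for the \emph{kernel complex}, which in turn requires the target to be a Poincar\'e complex; here $B(M)$ is not one, and the duality you invoke for $(W,\partial W)$ alone does not immediately transfer to the kernels of $F$. Making this precise (using the non-singularity of the form on $\pi_2(B)\cong\pi_2(M)^\dag$ to recover an effective kernel duality after quotienting by the radical) is exactly the content you flag as the ``principal technical obstacle,'' and it amounts to re-deriving a piece of Kreck's modified surgery machinery that the paper simply cites as a black box.
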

\begin{proof} If $[N_1, f_1] = [N_2,f_2]$, then by \cite[Cor.~3]{kreck3} the manifolds $N_1$ and $N_2$ become homeomorphic over $B$, after connected sum with copies of $S^2\times S^2$. It follows that the non-singular even forms $\lambda_{V_1}$ and $\lambda_{V_2}$ become isometric by adding hyperbolic forms on free $\La$-modules.
\end{proof}

If $X$ is a closed, simply-connected manifold, then we have a stabilization map
\[
j_X \colon \Omega_4(B(M)) \to \Omega_4(B(M\Sharp X)),
\]
defined by sending an element $(N,f)$ to $(N\Sharp X, f_X)$, where the new reference map $f_X \colon N \Sharp X \to B(M\Sharp X)$ is given by the composition
$$N \Sharp X \to N \vee X \xrightarrow{f\vee id_X} B(M) \vee X \to 
B(M)\vee B(X) \to B(M\Sharp X).$$
It is clear that the stabilization map induces a map of the normal structure subsets.
By construction of the map $\theta$, we have the following:

\begin {lemma} Let $X$ be a closed simply connected manifold. Then $\theta$ commutes with the stabilization map $j_X\colon \Omega_4(B(M))_M \to \Omega_4(B(M\Sharp X))_{M\Sharp X}$.
\end {lemma}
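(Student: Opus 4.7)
The plan is to unwind both sides of $\theta(j_X(N,f)) = \theta(N,f)$ directly and verify they produce the same class in $L_4(\Zpi)$. The essential algebraic input is that for a closed simply-connected $4$-manifold $X$, the integral intersection form $s_X$ on $H_2(X;\bbZ)$ is unimodular.

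First I would analyze what connected sum with $X$ does to $\pi_2$. Since $\pi_1(N\Sharp X)=\pi$ (as $X$ is simply-connected), the universal cover $\widetilde{N\Sharp X}$ is obtained from $\widetilde N$ by attaching one copy of $X$ minus an open disk at each lift of the basepoint. This yields a canonical $\Zpi$-module splitting
$$\pi_2(N\Sharp X) \cong \pi_2(N) \oplus \bigl(\Zpi\otimes_\bbZ H_2(X;\bbZ)\bigr),$$
and the equivariant intersection form decomposes orthogonally as $s_{N\Sharp X} = s_N \oplus (\Zpi\otimes s_X)$. Applying the same analysis to $M\Sharp X$ gives $\pi_2(M\Sharp X)^\dag \cong \pi_2(M)^\dag \oplus (\Zpi\otimes H_2(X))$, and therefore a matching splitting of $\pi_2(B(M\Sharp X))$.

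Next I would trace through the factorization of $f_X$ via $N\Sharp X \to B(M)\vee B(X) \to B(M\Sharp X)$. Using $\gdim\pi\leq 2$, the $k$-invariants vanish and the second arrow is a $2$-equivalence; this forces $f_X$ to split on $\pi_2$ as $f_*$ on the $\pi_2(N)$ summand and the identity on $\Zpi\otimes H_2(X)$. Representing $(N,f)$ with $f$ already $2$-connected, the same then holds for $f_X$, and
$$V_X := \ker\bigl(\pi_2(N\Sharp X) \to \pi_2(B(M\Sharp X))\bigr) = V \oplus 0,$$
where $V = \ker(\pi_2(N)\to \pi_2(B(M)))$.

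Finally, because $s_X$ is unimodular, the induced form $\Zpi\otimes s_X$ is non-singular over $\Zpi$, so $R(s_{N\Sharp X}) = R(s_N)\oplus 0$ and hence $V_X^\dag = V^\dag$. The restriction of the orthogonal form $s_{N\Sharp X}$ to $V_X$ then coincides with the restriction of $s_N$ to $V$, so $\lambda_{N\Sharp X,f_X} = \lambda_{N,f}$ as $\Zpi$-hermitian forms, giving $\theta(j_X(N,f)) = \theta(N,f)$ in $L_4(\Zpi)$. The only real obstacle is the bookkeeping needed to check that the $\pi_2$-splittings are respected by the reference maps; once that is in place, the unimodularity of $s_X$ does the rest.
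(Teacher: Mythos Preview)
Your proposal is correct and is exactly the unpacking of what the paper means by ``by construction of the map $\theta$'': you verify that under the orthogonal splitting $\pi_2(N\Sharp X)\cong \pi_2(N)\oplus(\Zpi\otimes H_2(X))$ the kernel $V_X$ equals $V\oplus 0$ and hence $\lambda_{N\Sharp X,f_X}=\lambda_{N,f}$ as forms, not merely as $L_4$-classes. The paper regards this as immediate and gives no further argument, so your write-up simply supplies the details the authors suppressed.
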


Recall from Lemma~\ref{bordismtwotwo} that  we have an injection of sets
$$
\rho_M :\Omega_4(B(M))_M \to H_2(\pi_1;\mathbb Z/2)
$$
defined by projecting the element
$$
[N,f] \mapsto [N,f] - [M,c],
$$
in the second filtration subgroup of $\Omega_4(B(M))$ to $E^{\infty}_{2,2} \subseteq H_2(\pi;\cy 2) \oplus \cy 2$, and then projecting further into $H_2(\pi;\cy 2)$.
By construction, the map $\rho$ also commutes with the stabilization map  $\Omega_4(B(M))_M\to \Omega_4(B(M\Sharp X))_{M\Sharp X}$ as defined above.
We want to relate $\rho$ to the assembly map
$$A_4\colon H_4( \Bpi; \bbL_0(\bbZ)) \to L_4(\Zpi)$$
as described in \cite[\S 1]{hmtw1}.  The domain of this assembly map is given by
 $$H_4( \Bpi; \bbL_0(\bbZ)) = H_0(\pi;\bbZ) \oplus H_2(\pi;\cy 2)$$
and we only need the restriction
$$ \kappa_2\colon
H_2(\pi;\cy 2) \to L_{4}(\Zpi)$$
of $A_4$ to the second summand. By comparing with the trivial group, it is easy to see that $A_4$ is injective if and only if $\kappa_2$ is injective, so that is part of our assumption (W-AA) in Theorem~C.

\begin {lemma}\label{lem:theta_relation} In the above setting there is a commutative diagram
$$\xymatrix{ \Omega_4(B(M))_M\ar[dr]^{\rho_M}\ar[rr]^{\theta_M} &&\widetilde L_4(\bbZ\pi)\cr &H_2(\pi;\cy 2) \ar[ur]_{\kappa_2}&}$$
\end{lemma}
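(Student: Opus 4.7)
My plan is to realize both $\theta_M$ and $\kappa_2\circ\rho_M$ as two descriptions of a single surgery-theoretic invariant, the first via the explicit surgery kernel form and the second via the Atiyah--Hirzebruch filtration of the assembly map. After stabilizing $N$ by simply-connected summands (which preserves both $\theta_M$ and $\rho_M$ by the lemmas just proved, and whose contribution to $\widetilde L_4(\Zpi)$ vanishes), I may assume that $f\colon N \to B(M)$ is a $2$-equivalence, so that the surgery kernel $V$ is a stably free $\Zpi$-module and $\theta_M([N,f]) = [\lambda_{N,f}] \in \widetilde L_4(\Zpi)$.

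Since $[N,f]$ and $[M,c]$ share signature, Kirby--Siebenmann invariant, and image of the fundamental class in $H_4(P;\bbZ)$, they are normally bordant over $B(M)$. Converting this bordism into a degree-$1$ normal map of $M$ produces a class in $\cN(M)$ whose surgery obstruction is precisely $\theta_M([N,f])$ by construction of the surgery kernel. By the Sullivan--Wall identification and the standard factorization of the surgery obstruction through the assembly map, this obstruction equals
$$
A_4 \circ c_* \circ \sigma\bigl([N,f] - [M,c]\bigr) \in L_4(\Zpi),
$$
where $\sigma \colon \cN(M) \to H_4(M;\bbL_0)$ is the normal invariant isomorphism and $c\colon M \to \Bpi$ classifies the universal covering. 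Under $\gdim \pi \leq 2$ one has the splitting $H_4(\Bpi;\bbL_0) \cong H_0(\pi;\bbZ) \oplus H_2(\pi;\cy 2)$, and the restriction of $A_4$ to the second summand is $\kappa_2$ by definition.

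It therefore suffices to show that the $H_2(\pi;\cy 2)$-component of $c_*\circ\sigma([N,f]-[M,c])$ equals $\rho_M([N,f])$. The James spectral sequence used to define $\rho_M$ has $E^2_{2,2} = H_2(\pi; \Omega_2^{TopSpin}(\ast)) = H_2(\pi;\cy 2)$, while the Atiyah--Hirzebruch spectral sequence for $\bbL_0$-homology has $E^2_{2,2} = H_2(\pi; L_2(\bbZ)) = H_2(\pi;\cy 2)$. The natural transformation of spectra realizing the surgery obstruction identifies these slots via the Arf invariant isomorphism $\Omega_2^{TopSpin}(\ast) \cong L_2(\bbZ) \cong \cy 2$, and naturality of edge homomorphisms then forces the two projections to $H_2(\pi;\cy 2)$ to agree on the difference class. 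Combined with Steps 1--2 this yields $\theta_M = \kappa_2\circ\rho_M$.

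The main obstacle is making the identification in the final step rigorous: verifying that the edge homomorphism of the James spectral sequence, transported along the natural map to $\bbL_0$-homology, lands in the $H_2(\pi;\cy 2)$-summand of the Atiyah--Hirzebruch filtration via the expected Arf-invariant comparison. This is a standard but delicate piece of spectrum-level bookkeeping: one has to check that the $2$-connective covers of the normal-bordism spectrum of $B(M)$ and the $\bbL_0$-spectrum are related by the Kervaire invariant at the $\pi_2$-level, so that the induced maps between the $E^2_{2,2}$-terms agree with the identity of $H_2(\pi;\cy 2)$. Once this naturality is in place, the lemma follows.
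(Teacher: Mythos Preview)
Your second paragraph contains a genuine error. You assert that $[N,f]$ and $[M,c]$ are normally bordant over $B(M)$ because they share signature, Kirby--Siebenmann invariant, and image of the fundamental class in $H_4(P;\bbZ)$. But Lemma~\ref{bordismtwotwo} says these invariants together with an $H_2(\pi;\cy 2)$-component detect the bordism class, and that extra component is precisely $\rho_M([N,f])$. If your claim were true, $\rho_M$ would vanish identically and there would be nothing to prove. Consequently there is no bordism to ``convert into a degree-$1$ normal map of $M$'', and the rest of the argument has no input to work with. Even if you could somehow produce a degree-$1$ normal map $g\colon N\to M$, you would still owe an identification of its surgery kernel with the kernel $V=\ker(\pi_2(N)\to\pi_2(B(M)))$ used to define $\theta_M$; these are a priori different objects.

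The paper's proof proceeds quite differently and avoids both difficulties. It first treats the case where $M$ is a \emph{minimal spin} manifold, so that $B(M)=K(\pi,1)\times BTOPSPIN$; here one can start from genuine degree-$1$ normal maps $N\to M$ and invoke Davis's computation relating the normal invariant to the $E^\infty_{2,2}$-class in the Atiyah--Hirzebruch spectral sequence for $\Omega_4^{TopSpin}(M)$. That yields $\theta=\kappa_2\circ\rho$ for minimal $M$, and simultaneously shows $\rho_M$ is a bijection onto $H_2(\pi;\cy 2)$. The general case follows because both $\theta$ and $\rho$ commute with stabilization by simply-connected summands, and every $M$ becomes homeomorphic to $M_0\,\Sharp\, X$ (with $M_0$ minimal and $X$ simply-connected) after such stabilization. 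This reduction sidesteps the spectrum-level bookkeeping you flag as the ``main obstacle'': the comparison between the James spectral sequence for $B(M)$ and the $\bbL_0$-homology spectral sequence for $K(\pi,1)$ is never needed directly.
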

\begin{proof} The case where $M$ is a minimal spin manifold follows from Davis \cite[Thm.3.10]{jdavis2}.  By the signature theorem,  for any degree one normal map $f\colon  N \to (M,\nu_M)$ the signature of $N$ is equal to that of $M$. Thus  there is a bijection between the  set of degree one normal maps to $(M, \nu_M)$ and $H^2(M;\mathbb Z/2)$ \cite{wall-book}, \cite{kirby-taylor1}. Since the Kirby-Siebenmann invariant of a spin manifold is determined by the signature, also this invariant agrees with that of $M$. Davis starts with such a degree $1$ normal map $f\colon  N \to (M,\nu_M)$ (which corresponds to an element $\beta \in H^2(M;\cy 2)$),  and chooses spin structures on $M$ and $N$ so that he can consider the elements $[M,id]$ and $[N,f]$ in $\Omega_4^{TopSpin}(M)$. Then he considers the Atiyah-Hirzebruch spectral sequence computing $\Omega_4^{TopSpin}(M)$ and shows that there are spin structures on $M$ and $N$ such that $\gamma := [M,id] - [N,f]$ sits in the filtration subgroup $F_{2,2}$. Furthermore he shows that $\gamma $ maps to  $\beta \cap [M] \in H_2(M;\mathbb Z/2) = E^\infty_{2,2}$. This together with  Wall's characteristic class formula \cite{wall-1976}, \cite{taylor-williams1} implies that $\kappa _2 (u_*(\gamma)) = \theta (N,f)$. 

Now we compare this information with the corresponding information when we pass from $M$ to $B(M)$. If $M$ is minimal, then $B(M) = K(\pi,1) \times BTOPSPIN$. By construction of our map $\theta$ and the classical surgery obstruction we have $\theta (N,f) = \theta (N,cf)$. We conclude that for $M$ minimal
$$
\theta [N,cf]= \kappa_2(\rho[N,cf]).
$$

We summarize these considerations: If $M$ is a minimal spin manifold and $\beta \in H_2(\pi;\mathbb Z/2)$ there is an $\alpha \in \Omega_4(B(M))_M$ with $\rho(\alpha) = \beta$ and $$\theta (\alpha) =\kappa _2(\rho (\alpha)).$$ This implies that $\rho$ is surjective and so, since it is injective,  a bijection to $H_2(\pi;\mathbb Z/2)$. From this we have the required formula for minimal manifolds. 

Since the maps $\rho$ commute with stabilization by connected sum with any simply connected manifold $X$, $\rho$ is a bijection for $M_0\Sharp X$ and the stabilization map $$j_X\colon \Omega_4(B(M))_M \to \Omega_4(B(M\Sharp X))_{M\Sharp X}$$ is a bijection. Since the maps $\theta$ also commute with stabilization,  the relation $\theta (\alpha) =\kappa _2(\rho (\alpha))$ holds for $M_0\Sharp X$ as well.

Next we remark that an orientation-preserving homeomorphism $h\colon M \to M'$ over $K(\pi,1)$ induces a map
$$ \Omega_4(B(M))_M \approx  \Omega_4(B(M'))_{M'}$$
 on the subset of normal structures, 
induced by composing the reference maps with $h$, or $(N,f) \mapsto (N, h\circ f)$.
More precisely, it is clear that the fundamental class $c_*[M] \mapsto c'_*[M']$, and the conditions on signature and $KS$-invariant are preserved by orientation-preserving homeomorphisms.  

Finally, if $M_1$ is arbitrary and $KS(M) =0$, there exist integers $(r,s)$ and $(r',s')$ such that
$M=M_0 \Sharp X$ is homeomorphic to $M'=M_1\Sharp X'$, where
$X = \Sharp_r \CP^2 \Sharp_s(-\CP^2)$, and $X'=\Sharp_{r'}\CP^2 \Sharp_{s'} (-\CP^2)$. 
Again commutativity of the maps under stabilization with $1$-connected manifolds implies the Lemma. If $KS(M) =1$, we replace one of the $\CP^2$'s by the Chern manifold $\CH$.
\end{proof}

\begin{corollary}\label{cor:fundclass}
Suppose that $[N,f]$, is an element in $\Omega_4(B(M))$, with $f$  a  reduced $3$-equivalence such that $\sign (N) = \sign (M)$ and $KS(N) = KS(M)$. If $f_*[N] = c_*[M]$ and $\kappa_2\colon H_2(\pi;\cy 2) \to L_4(\Zpi)$ is injective, then $[N,f]=[M,c] \in \Omega_4(B(M))$.
\end{corollary}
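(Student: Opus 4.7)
The plan is to combine the four-invariant detection result of Lemma~\ref{bordismtwotwo} with the surgery-theoretic formula $\theta_M=\kappa_2\circ\rho_M$ from Lemma~\ref{lem:theta_relation}, exploiting the injectivity hypothesis on $\kappa_2$.

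First I will verify that $[N,f]\in\Omega_4(B(M))_M$: by definition this subset consists of those bordism classes whose signature, Kirby-Siebenmann invariant, and pushforward of the fundamental class all agree with those of $[M,c]$, and these are precisely our three hypotheses. Consequently both $\rho_M([N,f])$ and $\theta_M([N,f])$ are defined, and $\rho_M([M,c])=0$ by construction.

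The key computation is that $\theta_M([N,f])=0$ in $L_4(\Zpi)$. Because $f$ is a reduced $3$-equivalence, the induced map $\pi_2(N)\to\pi_2(B(M))\cong\pi_2(N)^\dag$ is surjective with kernel exactly the radical $R(s_N)$; in particular $f$ is already $2$-connected, so no preliminary surgery is required in Definition~\ref{def:theta}. Taking $V=\ker(\pi_2(N)\to\pi_2(B(M)))=R(s_N)$, the form $s_N$ vanishes identically on its radical, so $V^\dag = V/R(s_N)=0$ and the induced form $\lambda_{N,f}$ is trivial. Hence $\theta_M([N,f])=0$.

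Feeding this into Lemma~\ref{lem:theta_relation} yields $\kappa_2(\rho_M([N,f]))=\theta_M([N,f])=0$, and injectivity of $\kappa_2$ forces $\rho_M([N,f])=0=\rho_M([M,c])$. Since $\rho_M$ is injective on $\Omega_4(B(M))_M$ by Lemma~\ref{bordismtwotwo}, we conclude $[N,f]=[M,c]$ in $\Omega_4(B(M))$. The only substantive point in the plan is the identification $V=R(s_N)$, which uses only the definition of a reduced $3$-equivalence; the remainder is bookkeeping with the machinery already assembled.
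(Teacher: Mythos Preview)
Your proof is correct and follows essentially the same approach as the paper: both show $\theta_M([N,f])=0$ using that for a reduced $3$-equivalence the kernel $V$ equals $R(s_N)$ (the paper phrases this via the isometry $\sMdagg{N}\cong\sMdag$, you phrase it directly as $V^\dag=0$), then apply Lemma~\ref{lem:theta_relation} and the injectivity of $\kappa_2$ to get $\rho_M([N,f])=0$, and conclude via Lemma~\ref{bordismtwotwo}. Your observation that no preliminary surgery is needed since $f$ is already $2$-connected is a useful clarification.
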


\begin{proof} If $f_*[N] = c_*[M]$, the intersections forms are isometric since $f$ is a reduced $3$-equivalence, and so $\theta [N,f] = \theta [M,c]=0$. By the previous Lemma, $\kappa _2(\rho [N,f] )= 0$ and so $\rho[N,f] = 0$ since $\kappa _2$ is injective. But $\rho[N,f]$ is defined as the projection of the difference element $[N,f]-[M,c]$ into $H_2(\pi;\cy 2)$. Since by Lemma \ref{bordismtwotwo} the bordism class is determined by the signature, the Kirby-Siebenmann obstruction, the image of the fundamental class and $\rho$, we have $[N,f]=[M,c]$.
\end{proof}

The next step in our bordism calculation is to control the image of the fundamental class $c_*[M] \in H_4(P)$ by the reduced intersection form $\sMdag$. 

\begin{theorem}\label{thm:fundamentalclass}
Two  reduced  $3$-equivalences  $c_M\colon M\to P$ and $c_N\colon N\to P$ satisfy 
\[
(c_M)_*[M] = (c_N)_*[N] \in H_4(P;\bbZ)
\]
 if and only if $(c_N)_*^{-1} \circ (c_M)_*:\pi_2(M)^\dag \to \pi_2(N)^\dag$ induces an isometry of reduced intersection forms.
\end{theorem}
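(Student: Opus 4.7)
My plan is to construct a natural map
\[
\Phi\colon H_4(P;\bbZ) \to \mathrm{Herm}_\Lambda(A)
\]
from $H_4(P;\bbZ)$ to the set of $\Lambda$-hermitian forms on $A := \pi_2(P)$ such that, for any reduced $3$-equivalence $c_M\colon M \to P$, the fundamental class $(c_M)_*[M]$ maps to $\sMdag$ (transported to $A$ via $(c_M)_*$), and then to show that $\Phi$ is injective. This immediately yields both directions of the theorem.

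The home of the fundamental class is computed by Lemma~\ref{lem:homology}: since $A$ is stably-free over $\Lambda = \Zpi$ by Corollary~\ref{cor:free} (hence in particular free as an abelian group), one has $H_4(P;\bbZ) \cong \Gamma(A)_\pi$, where $\Gamma$ is Whitehead's quadratic functor, and $\Gamma(A)$ embeds $\pi$-equivariantly as the submodule of symmetric tensors in $A \otimes_\bbZ A$. On the target side, a $\Lambda$-hermitian form $s$ on $A$ is determined by its identity-coefficient $c_e(x,y) := \varepsilon(s(x,y))$, which is a diagonally $\pi$-invariant $\bbZ$-bilinear form on $A$ satisfying $s(x,y) = \sum_{h\in\pi} c_e(x, hy) \, h$. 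Hence hermitian forms are in bijection with $\bbZ$-linear maps $(A \otimes_\bbZ A)_\pi \to \bbZ$, where the coinvariants are taken for the diagonal $\pi$-action.

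For the forward direction, I would invoke naturality of the cup product. Given $x,y \in A$ and any lifts $\tilde x, \tilde y \in \pi_2(M)$ along the surjection $(c_M)_*$, choose classes $\phi_x, \phi_y \in H^2(\widetilde P;\bbZ)$ whose pullbacks under $\widetilde c_M$ realize the Poincar\'e duals of $\tilde x, \tilde y$ in $H^2_c(\widetilde M;\bbZ)$. The projection formula applied to definition \eqref{sMdef} then yields
\[
\sMdag(x,y) \;=\; \sum_{g \in \pi} \bigl\la \phi_x \cup (g \cdot \phi_y),\, (\widetilde c_M)_*[\widetilde M]\bigr\ra \, g,
\]
and each integer coefficient depends only on the image of $(\widetilde c_M)_*[\widetilde M]$ in $\Gamma(A)_\pi = H_4(P;\bbZ)$, which is precisely $(c_M)_*[M]$. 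Consequently, if $(c_M)_*[M] = (c_N)_*[N]$, the two reduced intersection forms agree under the identification $(c_N)_*^{-1} \circ (c_M)_*$.

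For the backward direction, I must show $\Phi$ is injective. The recipe above identifies $\Phi$ with the composition $\Gamma(A)_\pi \hookrightarrow (A \otimes_\bbZ A)_\pi \to \mathrm{Herm}_\Lambda(A)$ obtained via the non-singular hermitian evaluation. The non-singularity of $\sMdag$ from Corollary~\ref{cor:free} provides the adjoint isomorphism $A \xrightarrow{\sim} A^* = \Hom_\Lambda(A, \Lambda)$, making this evaluation non-degenerate. The main obstacle will be to rigorously establish this injectivity: Whitehead's $\Gamma$-functor is a $\bbZ$-quadratic construction, whereas the hermitian forms carry $\Lambda$-linear structure, so a careful reconciliation is required. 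After reducing to a free module $A = \Lambda^n$ via the stably-free hypothesis (absorbing a free summand $\Lambda^k$ contributes a hyperbolic form and does not affect isomorphism classes), the group $\Gamma(\Lambda^n)_\pi$ can be computed explicitly in terms of pairs of basis elements, and identified with a natural abelian group of hermitian pairings, from which injectivity follows.
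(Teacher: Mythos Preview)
Your strategy coincides with the paper's: both identify $H_4(P;\bbZ)\cong\Gamma(A)_\pi$, construct a natural map to a module of forms so that $(c_M)_*[M]$ determines $\sMdag$, and then prove injectivity after stabilising to free $A$; the paper packages the map as the transfer $tr\colon H_4(P)\to H_4^{lf}(\widetilde P)^\pi$ followed by the slant product $\omega$ into $\Hom_\Lambda\bigl(H^2_c(\widetilde P),H_2(\widetilde P)\bigr)$ (so $(c_M)_*[M]$ hits the \emph{inverse} of the adjoint of $\sMdag$), and establishes injectivity by identifying the composite $\omega\circ tr$ with the norm inclusion $\Lambda\hookrightarrow\widehat\Lambda$, whereas you compute $\Gamma(\Lambda^n)_\pi$ directly. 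One technical point to tighten in your forward direction: $(\widetilde c_M)_*[\widetilde M]$ lives in locally finite homology $H_4^{lf}(\widetilde P)$, which pairs with $H^4_c$ but not with ordinary $H^4$, so your classes $\phi_x,\phi_y$ must be chosen in $H^2_c(\widetilde P;\bbZ)$ rather than $H^2(\widetilde P;\bbZ)$ --- this is precisely why the paper works with compactly supported cohomology throughout.
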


 The proof of this result will be given at the end of this section. Since the signature is determined by the intersection form on $\pi_2(M)$, we conclude from this result and Corollary \ref{cor:fundclass} the following result. 
\begin{corollary}\label{cor:intersectionform} Let $M$ and $N$ be closed oriented $4$-manifolds with the same Kirby-Siebenmann invariants. Suppose that $\kappa_2\colon H_2(\pi_1(M);\cy 2) \to L_4(\Zpi_1(M))$ is injective. Let $\alpha : \pi_1(M) \to \pi_1(N)$ be an isomorphism of $w_2$-types and $\beta: \pi_2(M)^\dag \to \pi_2(N)^\dag$ an $\alpha$-compatible isometry. Then there are reduced  $2$-smoothings  $M\to B(M)$ and $N\to B(M)$ compatible with  $(\alpha, \beta)$ which are bordant in $\Omega_4(B(M))$.
\end{corollary}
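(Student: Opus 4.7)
The plan is to combine Proposition~\ref{prop:reduced}, Theorem~\ref{thm:fundamentalclass}, and Corollary~\ref{cor:fundclass} in sequence. First, I would invoke Proposition~\ref{prop:reduced} to produce reduced normal $2$-smoothings $\hat c_M\colon M\to B(M)$ and $\hat c_N\colon N\to B(M)$ compatible with $(\alpha,\beta)$ in the sense of Remark~\ref{rem:iso}. Composing each with the projection $B(M)=B(P,w)\to P$ (whose fibre $BTOPSPIN$ is $3$-connected, so that $\pi_2(B(M))\to\pi_2(P)$ is an isomorphism) gives reduced $3$-equivalences $c_M,c_N\colon M,N\to P$ whose induced map $(c_N)_*^{-1}\circ(c_M)_*$ on $\pi_2^\dag$ equals $\beta$.

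Since $\beta$ is an isometry of reduced intersection forms by hypothesis, Theorem~\ref{thm:fundamentalclass} then gives $(c_M)_*[M]=(c_N)_*[N]$ in $H_4(P;\bbZ)$. Next I would verify that the signatures agree: because $\gdim \pi\le 2$ forces $H_4(\pi;\bbZ)=0$, Remark~\ref{signature_remark} identifies $\sign(M)$ with $\sign(s_M\otimes_\La\bbZ)$ and similarly for $N$. The radical $R(s_M)$ lies in the radical of $s_M\otimes_\La\bbZ$, so this signature factors through the induced form on $\pi_2(M)^\dag\otimes_\La\bbZ$; the isometry $\beta$ then forces $\sign(M)=\sign(N)$.

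Together with the hypothesis $KS(M)=KS(N)$, this shows that $[N,\hat c_N]$ lies in the subset $\Omega_4(B(M))_M$ of normal structures. Since $\hat c_N$ is a reduced $3$-equivalence whose fundamental class, signature, and Kirby-Siebenmann invariant coincide with those of $(M,\hat c_M)$, and since $\kappa_2$ is injective by hypothesis, Corollary~\ref{cor:fundclass} yields $[M,\hat c_M]=[N,\hat c_N]$ in $\Omega_4(B(M))$, which is precisely the $B$-bordism compatible with $(\alpha,\beta)$ asserted.

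The main subtlety is the signature comparison: this step uses the geometric $2$-dimensionality of $\pi$ in an essential way to kill $H_4(\pi;\bbZ)$ and bring Remark~\ref{signature_remark} into play, so that $\sign(M)$ is tied to the reduced intersection form $\sMdag$ rather than to finer homotopy information that the hypothesized isometry $\beta$ need not control. Everything else is essentially bookkeeping: tracking that the various smoothings descend to reduced $3$-equivalences into $P$ and that the fundamental class invariant of Lemma~\ref{bordismtwotwo} is exactly what Theorem~\ref{thm:fundamentalclass} produces from $\beta$.
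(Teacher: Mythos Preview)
Your proposal is correct and follows essentially the same route as the paper's proof: invoke Proposition~\ref{prop:reduced} to get the reduced normal $2$-smoothings, then use Theorem~\ref{thm:fundamentalclass} and Corollary~\ref{cor:fundclass} to match the bordism invariants listed in Lemma~\ref{bordismtwotwo}. The paper's own proof is very terse, and you have correctly unpacked the signature step (which the paper mentions only in the sentence preceding the corollary) via Remark~\ref{signature_remark} and the fact that $H_4(\pi;\bbZ)=0$ when $\gdim\pi\le 2$.
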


\begin{proof}
Proposition~\ref{prop:reduced} implies that there are reduced 2-smoothings into the same 3-coconnected fibration $B\to BSTOP$ which we may assume to equal $B(M)$. Lemma~\ref{bordismtwotwo} gives the invariants that control the bordism class in $\Omega_4(B(M))$. All these invariants are controlled by our assumptions and Theorem~\ref{thm:fundamentalclass}, Corollary~\ref{cor:fundclass}. 
\end{proof}

The remaining part of this section is devoted to the proof of Theorem \ref{thm:fundamentalclass}.
There is a commutative diagram
$$\xymatrix{H_4^{lf}(\widetilde P;\bbZ)^\pi \ar[r]^(0.3){\omega}&  \Hom_{\bbZ}(H^2_{cp}(\widetilde P;\bbZ), H_2(\widetilde P;\bbZ))^\pi\ar@{=}[d]\cr
H_4(P;\bbZ) \ar[r]^(0.3){\omega} \ar[u]^{tr}& \Hom_{\La}(H^2(P;\La), H_2(P;\La))
}$$
where $tr\colon H_4(P;\bbZ) \to H^{lf}_4(\widetilde P;\bbZ)$ denotes the transfer
map induced by the universal covering $\widetilde P \to P$. The image of
$tr(c_*[M])$ under the top horizontal slant product  map $\omega$ is just the inverse of the adjoint of the
equivariant intersection form $\sMdag$.
\begin{lemma}\label{injective}
The composition $\omega\circ tr$ is injective.
\end{lemma}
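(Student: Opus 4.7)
The plan is to unpack $\omega\circ tr$ algebraically and reduce its injectivity to a group-homology vanishing statement about the exterior square of $A:=\pi_2(P)$.

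First, I would identify source and target explicitly. By Corollary~\ref{cor:free}, $A$ is finitely generated and stably free over $\La$. Since the $k$-invariant of $P$ vanishes (as $\gdim\pi\le 2$), the universal cover $\widetilde P$ is $K(A,2)$, and Lemma~\ref{lem:homology} gives $H_4(P;\bbZ)\cong \Gamma(A)_\pi$, where $\Gamma$ is Whitehead's quadratic functor and the subscript denotes $\pi$-coinvariants. Since $H_1(P;\La)=H_1(\widetilde P;\bbZ)=0$, the universal coefficient spectral sequence yields $H^2(P;\La)\cong A^{*}:=\Hom_\La(A,\La)$, and because $A$ is finitely generated projective, the evaluation pairing provides a natural isomorphism $\Hom_\La(A^{*},A)\cong A\otimes_\La A$. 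So $\omega\circ tr$ becomes a map $\Gamma(A)_\pi\to A\otimes_\La A$.

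Second, I would match this map with the one induced by Whitehead's natural transformation $\gamma_A\colon \Gamma(A)\to A\otimes_\bbZ A$, $\gamma(a)\mapsto a\otimes a$. The key computation is the classical cap product pairing $H^2(K(A,2);\bbZ)\otimes H_4(K(A,2);\bbZ)\to H_2(K(A,2);\bbZ)$, which sends $\phi\otimes \gamma(a)$ to $\phi(a)\cdot a$; compatibility of the transfer with the cellular slant product then yields the claimed formula. Under the identification $(A\otimes_\bbZ A)_\pi\cong A\otimes_\La A$ (coming from the involution $g\mapsto g^{-1}$ on $\La$), injectivity of $\omega\circ tr$ becomes injectivity of the map $\Gamma(A)_\pi\to A\otimes_\La A$ induced by $\gamma_A$.

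Third, I would exploit the short exact sequence of $\pi$-modules with diagonal action
\[
0\longrightarrow \Gamma(A)\xrightarrow{\gamma_A} A\otimes_\bbZ A\longrightarrow \Lambda^2 A\longrightarrow 0,
\]
which is exact because $A$ is free abelian, identifying $\Gamma(A)$ with the subgroup of symmetric tensors. Its long exact sequence in group homology contains the segment
\[
H_1(\pi;\Lambda^2 A)\longrightarrow \Gamma(A)_\pi \longrightarrow (A\otimes_\bbZ A)_\pi\cong A\otimes_\La A,
\]
so it suffices to establish $H_1(\pi;\Lambda^2 A)=0$.

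The main obstacle, as I see it, is verifying this vanishing, which I would do by showing that $\Lambda^2 A$ is $\La$-projective. Writing $A\oplus \La^n=\La^m$ and applying the natural decomposition $\Lambda^2(V\oplus W)=\Lambda^2 V\oplus (V\otimes_\bbZ W)\oplus \Lambda^2 W$ exhibits $\Lambda^2 A$ as a direct summand of $\Lambda^2 \La^m$. Since $\gdim\pi\le 2$ forces $\pi$ to be torsion-free, a direct orbit analysis of the diagonal $\pi$-action on the obvious $\bbZ$-bases shows that $\La\otimes_\bbZ \La$ is $\La$-free with basis $\{1\otimes g\}_{g\in\pi}$ and $\Lambda^2_\bbZ \La$ is $\La$-free with basis indexed by the inversion-orbits $\{a,a^{-1}\}$ in $\pi\setminus\{1\}$ (torsion-freeness is precisely what rules out stabilizer relations $a^2=1$). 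Hence $\Lambda^2 \La^m$, and therefore $\Lambda^2 A$, is $\La$-projective, so $H_i(\pi;\Lambda^2 A)=0$ for all $i>0$, and the required injectivity follows.
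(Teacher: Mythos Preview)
Your argument is essentially correct and follows a genuinely different route from the paper. One minor slip: the universal coefficient spectral sequence does \emph{not} give $H^2(P;\La)\cong A^*$. You used $H_1(\widetilde P)=0$ to kill the $E_2^{1,1}$-term, but the $E_2^{2,0}$-term $\Ext^2_\La(\bbZ,\La)=H^2(\pi;\La)=R(\pi)$ survives and is typically nonzero. This does not damage the proof: classes in $H_4(P;\bbZ)\cong\Gamma(A)_\pi$ are pushed forward from $H_4(\widetilde P;\bbZ)=\Gamma(A)$, and the image of $H^2(\pi;\La)$ in $H^2(P;\La)$ pulls back to zero on $\widetilde P$, so it caps trivially with every such class. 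Hence $\omega\circ tr$ factors through the inclusion $\Hom_\La(A^*,A)\hookrightarrow\Hom_\La(H^2(P;\La),A)$, and your identification of the map with the coinvariants of $\gamma_A$ goes through as you describe in your second step.

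The paper's proof is quite different. It first stabilizes so that $A=\La^r$ is free, then interprets $\Gamma(A)$ via Whitehead's description as symmetric admissible maps $A^*\to A$, and identifies $\omega\circ tr$ with a norm map $N\colon \Hom^a_\bbZ(A^*,A)_\pi\to \Hom_\bbZ(A^*,A)^\pi$. This is then shown to be injective by an explicit coordinate computation which boils down to the inclusion $\La\subset\widehat\La$ of the group ring into formal infinite sums. Your approach instead stays homological: you use the exact sequence $0\to\Gamma(A)\to A\otimes_\bbZ A\to\Lambda^2 A\to 0$ and reduce to the vanishing of $H_1(\pi;\Lambda^2 A)$, which you obtain from projectivity of $\Lambda^2 A$ over $\La$ via an orbit analysis (torsion-freeness of $\pi$ being exactly what prevents a sign problem when a group element would swap two basis vectors). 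Your argument avoids the stabilization step entirely since it works directly for stably-free $A$, and is arguably more conceptual; the paper's computation is more explicit and makes the target transparent as hermitian forms over $\widehat\La$.
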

It is enough to prove this injectivity after stabilizing $M$ by connected sum with copies of $S^2 \times S^2$, so we may assume that $A = \pi_2(P)$ is a finitely-generated, free $\La$-module. Let $\{a_i\}$ denote a $\bbZ$-basis for $A$. In our applications, each $a_i = ge_j$, for some $g\in \pi$, where $\{e_j\}$ denotes a given $\La$-basis for $A$. This is the natural underlying $\bbZ$-basis for a free, based $\La$-module. 

Following \cite[p.~62]{whitehead1}, we define
$$A^* = \{ \phi \colon A \to \bbZ \vv  \phi(a_i) = 0 \text{\ for\ almost\ all\ }   i\}$$
Let $\{a_i^*\}$ denote the dual basis for $A^*$. We say that a homomorphism $f\colon A^* \to A$ is \emph{admissible} if $f(a^*_i) = 0$ for almost all $i$, and that $f$ is \emph{symmetric} if $a^*fb^* = b^*fa^*$ for all $a^*, b^* \in A^*$.
\begin{lemma}[{\cite[p.~62]{whitehead1}}]
$$\Gamma(A) \cong \{ f \colon A^* \to A\vv f \text{\ is\ symmetric\  and\ admissible}\}$$
\end{lemma}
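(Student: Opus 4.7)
The plan is to construct an explicit isomorphism via the universal property of Whitehead's $\Gamma$-functor as the universal quadratic functor on abelian groups. Recall that for a free abelian group $A$ with $\bbZ$-basis $\{a_i\}$, the group $\Gamma(A)$ is free abelian on the basis $\{\gamma(a_i)\}_i \cup \{[a_i, a_j] : i < j\}$, where $\gamma\colon A \to \Gamma(A)$ is the universal quadratic map and $[x, y] := \gamma(x+y) - \gamma(x) - \gamma(y)$ is its associated symmetric bilinear form.

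First I would define a candidate homomorphism $\Phi$ from $\Gamma(A)$ to the group of symmetric admissible maps $A^* \to A$ by applying the universal property of $\Gamma$ to the function $q\colon a \mapsto q_a$, where $q_a(\phi) := \phi(a) \cdot a$. The three required properties are straightforward: admissibility of $q_a$ follows because for $a = \sum_j n_j a_j$ (a finite sum) we have $q_a(a_i^*) = n_i \cdot a$, vanishing for almost all $i$; symmetry follows from $a^* q_a b^* = b^*(a)\, a^*(a) = b^* q_a a^*$; and the polarization $q_{a+b} - q_a - q_b$ sends $\phi$ to $\phi(a)b + \phi(b)a$, which is $\bbZ$-bilinear and symmetric in $(a, b)$, so $q$ is quadratic. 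The universal property then yields a unique homomorphism $\Phi$ with $\Phi(\gamma(a)) = q_a$.

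Next I would verify $\Phi$ is an isomorphism by matching explicit $\bbZ$-bases on each side. Any symmetric admissible $f\colon A^* \to A$ is determined by the integers $c_{ij}(f)$ defined by $f(a_i^*) = \sum_j c_{ij}(f)\, a_j$. Admissibility forces only finitely many rows to be nonzero, and symmetry evaluated as $a_l^*(f(a_k^*)) = a_k^*(f(a_l^*))$ gives $c_{kl}(f) = c_{lk}(f)$. Together these yield a symmetric matrix of finite support, and any such matrix arises from a unique $f$. Hence the target is free abelian on the diagonal elements $E_{ii}$ and the off-diagonal symmetric elements $E_{ij} + E_{ji}$ for $i < j$. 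A direct computation with the formula for $q$ shows $\Phi(\gamma(a_i)) = E_{ii}$, and expanding $\Phi([a_i, a_j]) = q_{a_i+a_j} - q_{a_i} - q_{a_j}$ on each $a_k^*$ yields $\delta_{ki} a_j + \delta_{kj} a_i$, i.e.\ the element $E_{ij} + E_{ji}$. Thus $\Phi$ takes a basis of $\Gamma(A)$ bijectively onto a basis of the target.

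The main subtlety will be the infinite-rank bookkeeping, where symmetry and admissibility must be invoked together to rule out ``wide'' symmetric matrices with infinitely many nonzero entries in some fixed row: if $c_{kl}(f) \neq 0$ then symmetry gives $c_{lk}(f) \neq 0$, so $f(a_l^*) \neq 0$, and admissibility then restricts both $k$ and $l$ to the finite support of $f$, yielding finite support of the whole matrix. Once this point is correctly handled, the remainder is a routine comparison of free abelian groups on explicitly matching bases, and the identification of $\Gamma(A)$ with the group of symmetric admissible maps $A^* \to A$ follows.
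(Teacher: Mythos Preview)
The paper does not supply its own proof of this lemma; it simply cites Whitehead. Your argument is correct and provides exactly the details one would write out: the universal quadratic map $a\mapsto q_a$ lands in the symmetric admissible maps, and the induced homomorphism carries the standard basis $\{\gamma(a_i)\}\cup\{[a_i,a_j]:i<j\}$ of $\Gamma(A)$ bijectively onto the basis $\{E_{ii}\}\cup\{E_{ij}+E_{ji}:i<j\}$ of finitely supported symmetric integer matrices.

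One small simplification: your ``main subtlety'' is slightly overstated. Since $A=\bigoplus_i \bbZ a_i$ is a direct sum, any single value $f(a_k^*)\in A$ automatically has only finitely many nonzero coordinates $c_{kl}$; combined with admissibility (only finitely many $k$ have $f(a_k^*)\neq 0$), this already forces the entire matrix to have finite support. The symmetry argument you give is valid but unnecessary for this step.
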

\begin{proof}[The proof of Lemma \ref{injective}]
Suppose now that $A = \La^r$, and notice that $\Hom_\La(A, \La) \cong A^*$.  Then we have a commutative diagram
$$\xymatrix{\Gamma(A)^\pi \ar@{^{(}->}[r] &H_4^{lf}(\widetilde P;\bbZ)^\pi \ar[r]^(0.4){\omega}&  \Hom_{\bbZ}(A^*, A)^\pi\cr
\Gamma(A)_\pi \ar@{=}[r] \ar[u]_N&H_4(\widetilde P;\bbZ)_\pi \ar[r]^(0.4){\approx} \ar[u]^{N}& 
\Hom^{a}_{\bbZ}(A^*, A)_\pi\ar[u]_N}
$$
where $\Hom^{a}$ denotes the  admissible homomorphisms, and the norm maps $N\colon L_\pi \to L^\pi$ are formally defined for any $\La$-module by applying the operator $N = \sum\{g \vv g\in \pi\}$ (this makes sense only if the sum is actually finite when $N$ is applied to elements of $L$). Now the point is that the right-hand norm map in the diagram is a direct sum of the norm maps 
$$N\colon \Hom^{a}_{\bbZ}(\La^*, \La)_\pi \to  \Hom_{\bbZ}(\La^*, \La)^\pi $$
 It is convenient to identify $\La^* \cong \La$, and then express
$$ \Hom_{\bbZ}(\La^*, \La) \cong \Hom_{\bbZ}(\La, \La) \cong \bigoplus_{g\in \pi}\Hom_{\bbZ}(\La, \bbZ)$$
where the induced $\pi$-action on the right-hand side permutes the copies of $\Hom_{\bbZ}(\La, \bbZ)$ by the formula $(h\cdot\phi)_g= h\phi_{h^{-1}g}$. If we restrict to the $\pi$-fixed set of this action, then the component at $g=e$ determines all of the other components.
Therefore, projection on the component of $g=e$ gives an isomorphism
$$ (\bigoplus_{g\in \pi}\Hom_{\bbZ}(\La, \bbZ) )^\pi \cong \Hom_{\bbZ}(\La, \bbZ)$$
Similarly, if we restrict to admissible maps and project to the co-fixed set, the maps concentrated at the identity component represent the equivalence classes. Therefore
$$\Hom^a_{\bbZ}(\La^*, \La)_\pi  \cong (\bigoplus_{g\in \pi}\Hom^a_{\bbZ}(\La, \bbZ) )_\pi \cong \Hom^a_{\bbZ}(\La, \bbZ)$$
Let $\phi = (\phi_g)$ be an admissible homomorphism with $\phi_g = 0$ unless $g=e$. 
The norm map is now given by the formula
$$(N\phi)_g = \sum_h (h\cdot\phi)_g = \sum_h  h\phi_{h^{-1}g} = g\phi_e$$
After projection to the identity component, we see that the right-hand norm map in the diagram is just a direct sum of standard inclusions
$$\Hom^a_{\bbZ}(\La, \bbZ) \subset \Hom_{\bbZ}(\La, \bbZ)$$
which is certainly injective. It follows that $\omega\circ N$ is injective.

Under our assumptions on $P$, we have an isomorphism $H_4(\widetilde P;\bbZ)_\pi \cong H_4(P;\bbZ)$ by projection from the universal covering. Therefore the norm map $N\colon H_4(\widetilde P;\bbZ)_\pi \to 
H_4^{lf}(\widetilde P;\bbZ)^\pi$ may be identified with the transfer
$tr$. Therefore $\omega\circ tr$ is injective.
\end{proof}
\begin{remark} Another way to express this conclusion about $N$ is to identify
$$\La\cong  \Hom^{a}_{\bbZ}(\La^*, \La)_\pi \to  \Hom_{\bbZ}(\La^*, \La)^\pi \cong \widehat \La$$
where $\widehat \La$ denotes the ring of infinite $\bbZ$-linear combinations of elements of $\pi$. One can verify 
that the norm map 
corresponds to the natural inclusion $\La \subset \widehat\La$, which is just $\bigoplus_{g\in \pi}\bbZ \subset \prod_{g\in \pi}\bbZ$.
\end{remark}
\begin{proof}[The proof of Theorem \ref{thm:fundamentalclass}] 
If $(c_1)_*[M_1] = (c_2)_*[M_2]$, we get $\sMdagg{M_1} \cong \sMdagg{M_2}$ by applying the map $\omega\circ tr$. The converse holds since $\omega\circ tr$ is injective  by Lemma \ref{injective}.
\end{proof}

\section{Baumslag-Solitar groups}\label{sec:BS}
Our goal in this section is to  establish the properties (W-AA) from Definition~\ref{def:WAA} for these groups.
The Baumslag-Solitar groups $\pi=B(k)$ are 1-relator groups, so 
 the presentation $2$-complex $X$  is a $K(\pi,1)$,  \cite{lyndon-schupp1}. It follows that $H^2(X; \Zpi) = H^2(\pi; \Zpi)$. 
 \begin{remark}
For the ``exceptional" cases $k = 0, 1, -1$ we just have $X=S^1$, the torus $T^2$, or  the Klein  bottle.
\end{remark}
 The chain complex $C(\wX)$ for $k \neq 0$ has the form (compare \cite[Lemma 4.3]{friedl-teichner1})
 $$0 \to \La \xrightarrow{\bd_2} \La \oplus \La \xrightarrow{\bd_1} \La \xrightarrow{\epsilon} \bbZ \to 0$$
 where $\La = \Zpi$ denotes the integral group ring, and $\epsilon$ is the augmentation map. The boundary map $\bd_1 =({1-a},{1-b})$ and the boundary map $\bd_2 = \binom{\bd_a}{\bd_b}$
 is given by the Fox derivatives  \cite{fox_1953} of the relation $aba^{-1}b^{-k}$
 $$\bd_a = 1 - aba^{-1}, \quad \bd_b = a - aba^{-1}b^{-k}\left ( {\frac{b^k - 1}{b-1}}\right )$$

From this complex, one can compute the homology  of $B(k)$. Note the convention $(\bd_1\circ\bd_2)(v) = v\cdot \bd_2 \cdot \bd_1$ expressing the composition in terms of right matrix multiplication. 
\begin{lemma} \label{lem:homology2}
For $\pi = B(k)$, we have the following (co)homology groups.
\begin{enumerate}
\item $H_1(\pi;\bbZ) = \bbZ \oplus \cy{(k-1)}$, $H_i(\pi;\bbZ) = 0$ for $i > 2$.
\item $H_2(\pi;\bbZ) = \bbZ$ if $k=1$, and
$H_2(\pi;\bbZ) = 0$ otherwise.
\item $H^2(\pi;\cy 2) = \cy 2$, if $k$ is odd, and $H^2(\pi;\cy 2) = 0$, if $k$ is even.
\item $H^i(\pi;\Zpi) = 0$ for $i \neq 2$ and $k \neq 0$.
\item $R(\pi) = H^2(\pi; \Zpi)$ is free abelian, and surjects onto $\bbZ[1/k]$ if $k \neq 0$.
\end{enumerate}
\end{lemma}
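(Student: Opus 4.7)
Because $B(k)$ is a torsion-free one-relator group, the presentation $2$-complex $X(\cP)$ is aspherical, and the displayed chain complex is a free $\La$-resolution of $\bbZ$. Substituting the relation $aba^{-1}=b^k$ into the Fox derivatives of $r$ simplifies them in $\La$ to $\bd_a = 1-b^k$ and $\bd_b = a - s_k$, where $s_k = 1 + b + \cdots + b^{k-1}$ for $k\geq 1$ (and analogously for $k\leq 0$). Under augmentation these reduce to $0$ and $1-k$ respectively, providing the computational input for all five parts.

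Parts (i) and (ii) follow by tensoring the resolution over $\La$ with $\bbZ$: the complex becomes $\bbZ \to \bbZ^2 \to \bbZ$ with differentials $\binom{0}{1-k}$ and $0$, and the stated values are read off directly, with the exceptional case $k=1$ producing an extra $\bbZ$ in both $H_1$ and $H_2$. For (iii), apply the universal coefficient sequence
$$0 \to \Ext(H_1(\pi;\bbZ),\cy 2) \to H^2(\pi;\cy 2) \to \Hom(H_2(\pi;\bbZ),\cy 2) \to 0,$$
and split into cases according to the parity of $k-1$, plus the exceptional $k=1$; this yields $\cy 2$ for odd $k$ and $0$ for even $k$.

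For (iv), dualize the resolution with $\Hom_\La(-,\La)$ to obtain a cochain complex $0\to \La\to \La^2\to \La \to 0$ computing $H^*(\pi;\La)$. Vanishing for $i>2$ is automatic, and $H^0(\pi;\La)=0$ because any $\mu\in \La$ with $(a-1)\mu = 0$ has finite support invariant under the infinite-order element $a$, forcing $\mu=0$. For $H^1(\pi;\La) = 0$, the cleanest route is the standard identification $H^i(\pi;\La) \cong H^i_c(\wX;\bbZ)$, valid since $X(\cP)$ is a finite $K(\pi,1)$, combined with the fact that $B(k)$ is one-ended for $k\neq 0$ (it is a finitely generated, infinite, torsion-free solvable group that is not virtually cyclic, hence one-ended by Stallings). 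The long exact sequence relating $H^*_c(\wX)$, $H^*(\wX)$ and the end cohomology then collapses to give $H^1_c(\wX;\bbZ) = 0$.

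For (v), construct the surjection onto $\bbZ[1/k]$ from the group homomorphism $\chi\colon \pi \to \bbZ[1/k]^\times$ sending $a\mapsto k$ and $b\mapsto 1$; this is well-defined because $k\cdot 1\cdot k^{-1} = 1 = 1^k$ in $\bbZ[1/k]$. The induced $\La$-linear map $\La \to \bbZ[1/k]$ kills $\bd_a = 1-b^k$ (since $1-1=0$) and $\bd_b = a-s_k$ (since $k-k=0$), so it descends to a $\La$-module surjection $\bar\chi\colon H^2(\pi;\La)\twoheadrightarrow \bbZ[1/k]$, surjective because the values $\chi(a^n) = k^n$ generate $\bbZ[1/k]$ additively. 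The free-abelian claim for $H^2(\pi;\La)$ I would read off from the Mayer--Vietoris sequence attached to the HNN decomposition $B(k) = \langle b\rangle *_{b\mapsto b^k}$, which presents $H^2(\pi;\La)$ as the cokernel of an explicit map between two copies of $H^1(\langle b\rangle;\La)\cong \bigoplus_{\langle b\rangle\backslash \pi}\bbZ$ (Shapiro's lemma). Verifying that this cokernel is torsion-free is the main technical obstacle, requiring careful bookkeeping of how the two restrictions along $\langle b\rangle$ and $\langle b^k\rangle$ permute and duplicate the coset-indexed generators.
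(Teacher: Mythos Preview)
Your argument for parts (i)--(iv) and for the surjection in (v) is correct and simply fills in what the paper leaves ``to the reader''. The explicit character $\chi$ you build is precisely the map that appears in the reference \cite{friedl-teichner1} that the paper cites for this step, so there is no real divergence there.

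The genuine difference is in the free-abelian claim in (v). The paper does not attempt any direct computation: it invokes the result of Mihalik--Tschantz \cite{mihalik-tschantz1} that one-relator groups are semistable at infinity, which implies in general that $H^2(\pi;\Zpi)$ is free abelian. This gives the statement for all $B(k)$ (indeed all one-relator groups) in one stroke. Your proposed route via the Mayer--Vietoris sequence for the HNN splitting $B(k)=\langle b\rangle *_{\,b\mapsto b^{k}}$ is in principle workable --- you correctly identify $H^2(\pi;\La)$ as the cokernel of an explicit map between two countable free abelian groups --- but showing that this particular cokernel is torsion-free requires a nontrivial combinatorial analysis of how the two coset decompositions interact, and you have not carried it out. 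That is the one place where your plan is genuinely incomplete rather than merely sketched; citing \cite{mihalik-tschantz1} (or the Geoghegan--Mihalik consequence that semistability forces $H^2(\pi;\Zpi)$ to be free abelian) would close the gap with no extra work and in greater generality.

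Your treatment of (iv) via one-endedness and Stallings is a nice alternative to a bare-hands kernel computation and is perhaps worth keeping, since it makes transparent why the hypothesis $k\neq 0$ enters.
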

\begin{proof} By \cite[Cor.~2]{mihalik-tschantz1} the group $H^2(\pi; \Zpi)$ is free abelian for 1-relator groups, and as observed in \cite[Lemma 4.3]{friedl-teichner1} it surjects onto $\bbZ[1/k]$. The other parts will be left to the reader.
\end{proof}
\begin{remark} It is an interesting question whether $\pi_2(M)$ is always a free abelian group, for any closed $4$-manifold $M$. This seems to be the same as asking whether $H^2(\pi;\Zpi)$ is always free abelian for any finitely-presented group $\pi$. The latter is a well-known question in group theory.  
\end{remark}

We will need to compute the Whitehead group $\wh(\pi)$ and the surgery obstruction groups $L_4(\Zpi)$, $L_5(\Zpi)$, for our fundamental groups $\pi=B(k)$. We will use the well-known fact that the $K$-theory and $L$-theory functors commute with direct limits of rings (with involution).
The idea is that these functors are defined in terms of $n\times n$ matrices over rings, and such matrices live at a
finite stage of the direct limit. The results here are well-known to the experts.

To apply this remark, we notice that $\pi \cong \bbZ[1/k]\rtimes \bbZ$, if $k\neq 0$, where  $aba^{-1}=b^k$ and the subgroup normally generated by $b$
$$ \la b, a^{-1}ba, a^{-2}ba^2, \dots\ra\subset \pi\ 
$$
is isomorphic to $\bbZ[1/k]$. 
The Wang exact sequence
$$ \dots \to H_i(\bbZ[1/k]; A) \xrightarrow{1-\alpha_*}  H_i(\bbZ[1/k]; A) \to
 H_i(\pi; A) \to  H_{i-1}(\bbZ[1/k]; A) \to \dots$$
 now gives another method of calculation for $A$ any $\Zpi$-module. Here
$\alpha\colon \bbZ[1/k] \to \bbZ[1/k]$ is the group automorphism ``multiplication by $ k$"  induced by conjugation by $a$. To see this, let $c_0 = b$, and define $c_i = a^{-i}ba^i$ for $i>0$. Then $\alpha(c_i) = c_i^k = c_{i-1}$ for $i>0$  and $\alpha(b) = b^k$. 
Since $\bbZ[1/k]$ is the direct limit $(\bbZ \xrightarrow{k} \bbZ \xrightarrow{k} \dots)$, with countable $\bbZ$-basis $\{c_0, c_1, \dots \}$, and homology commutes with direct limits, the maps $1-\alpha_*$ can be evaluated to arrive at Lemma~\ref{lem:homology2}. 
The same technique will be used for $L$-theory.

\begin{lemma}[{Waldhausen \cite[19.5]{waldhausen2}}] $\wh(B(k)) = 0$.
\end{lemma}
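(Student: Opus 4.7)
The plan is to present $B(k)$ as an HNN extension and then apply Waldhausen's Mayer--Vietoris sequence for the algebraic $K$-theory of a generalized free product, combined with his regularity criterion for the vanishing of the associated Nil terms. For $k=0$ we have $B(0) \cong \bbZ$, so $\wh(B(0))=0$ is Higman's classical theorem. For $k\ne 0$, the first step is to write
\[
B(k) \;=\; \la b\ra *_{\varphi}
\]
as the HNN extension with base group $A=\la b\ra \cong \bbZ$, associated subgroups $C_+ = \la b\ra$ and $C_- = \la b^k\ra$ (each an infinite cyclic subgroup of $A$), amalgamated by the isomorphism $\varphi\colon C_+ \to C_-$, $b \mapsto b^k$, with stable letter $a$.

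The second step is to feed this decomposition into Waldhausen's long exact sequence \cite{waldhausen2}, which in the HNN setting takes the form
\[
\cdots \to \wh(\bbZ) \to \wh(B(k)) \to \widetilde K_0(\bbZ[\bbZ]) \oplus \widetilde{\mathrm{Nil}}_0 \to \widetilde K_0(\bbZ[\bbZ]) \to \cdots
\]
Two standard $K$-theoretic inputs control the outer terms: $\wh(\bbZ)=0$, and $\widetilde K_0(\bbZ[\bbZ])=0$ by the Bass--Heller--Swan decomposition $K_0(\bbZ[t,t^{-1}]) \cong K_0(\bbZ) \oplus K_{-1}(\bbZ)$ together with $K_{-1}(\bbZ)=0$ and $\widetilde K_0(\bbZ)=0$.

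The main obstacle is showing that the $\widetilde{\mathrm{Nil}}_0$ contributions vanish. The plan here is to invoke Waldhausen's regularity criterion: if the base group ring $\bbZ A = \bbZ[t,t^{-1}]$ is regular coherent and the rings $\bbZ C_\pm$ of the amalgamating subgroups are regular coherent, then the Nil terms arising from the HNN decomposition vanish. In our case $\bbZ[t,t^{-1}]$ is regular Noetherian of Krull dimension~$2$, and both inclusions $C_\pm \hookrightarrow A$ are finite-index subgroups (the identity and multiplication by $k$), so the regularity hypotheses are satisfied. Combining this vanishing with the two inputs above, exactness of the Mayer--Vietoris sequence forces $\wh(B(k)) = 0$, as claimed.
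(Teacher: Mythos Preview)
Your proposal is correct and is precisely an unpacking of the paper's citation: the paper gives no argument of its own but simply invokes Waldhausen \cite[19.5]{waldhausen2}, which covers $B(k)$ exactly because it is an HNN extension of $\bbZ$ along infinite cyclic subgroups, with regular Noetherian base ring $\bbZ[\bbZ]$ forcing the Nil terms to vanish. Your sketch is thus essentially the same approach as the paper, just with the details of Waldhausen's machinery made explicit.
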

 Because of this result,  we can suppress $L$-theory torsion decorations and use $L \equiv L^h$ throughout the rest of this section.

We now compute the quadratic $L$-groups using a long exact Wang sequence \cite[p.~167]{cappell3},   \cite{ra3}:
$$\dots \to L_n(S) \xrightarrow{1-\alpha_*} L_n(S) \to L_n(\Zpi) 
\overset{\delta}{\to} L_{n-1}(S) \to \dots $$
where $S= \bbZ[\bbZ[1/k]]$.
There is a similar exact sequence for the symmetric $L$-groups (see
 \cite[4.1]{milgram-ranicki2}).

\begin{lemma}\label{inputb}
 For $\pi=B(k)$, we have
\begin{enumerate}
\item $L_4(\Zpi) \cong \bbZ \oplus \cy 2$ for $k$ odd.
\item  $L_4(\Zpi) \cong \bbZ$ for $k$ even.
\item $L_5(\Zpi)  \cong \bbZ \oplus \cy {(k-1)}$.
\item $L^0(\Zpi) \cong \bbZ$.
\end{enumerate}
\end{lemma}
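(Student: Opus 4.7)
The plan is to feed the Wang exact sequence
\[
\cdots \to L_n(S) \xrightarrow{1-\alpha_*} L_n(S) \to L_n(\Zpi) \xrightarrow{\delta} L_{n-1}(S) \xrightarrow{1-\alpha_*} \cdots
\]
stated above for $\pi = \bbZ[1/k]\rtimes\bbZ$ with an explicit computation of $L_n(S)$ and of $\alpha_*$. Four-periodicity reduces the task to $n\in\{0,1,2,3\}$, and Waldhausen's vanishing $\wh(\pi)=0$ lets me suppress all $L$-theory decorations. The same strategy with symmetric $L$-theory will handle $L^0(\Zpi)$.

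First I realize $S=\bbZ[\bbZ[1/k]]$ as the colimit of copies of $\bbZ[\bbZ]$ under the ring homomorphisms $t\mapsto t^k$, exactly as described earlier in this section at the module level. Since $L$-theory commutes with direct limits of rings with involution, $L_n(S)\cong\colim L_n(\bbZ[\bbZ])$. The Shaneson splitting $L_n(\bbZ[\bbZ])\cong L_n(\bbZ)\oplus L_{n-1}(\bbZ)$ is natural for $t\mapsto t^k$, which acts as the identity on the first summand (compatibility with the augmentation) and as multiplication by $k$ on the second summand (the codimension-one transfer). Taking colimits yields
\[
L_n(S)\cong L_n(\bbZ)\oplus\bigl(L_{n-1}(\bbZ)\otimes\bbZ[1/k]\bigr).
\]
Plugging in the values $L_n(\bbZ) = \bbZ,0,\cy 2,0$ for $n\equiv 0,1,2,3\pmod 4$, I read off $L_0(S)=\bbZ$, $L_1(S)=\bbZ[1/k]$, $L_2(S)=\cy 2$, and $L_3(S)\cong\cy 2\otimes\bbZ[1/k]$, which is $\cy 2$ for odd $k$ and $0$ for even $k$ (since $2$ is a unit in $\bbZ[1/k]$ exactly when $k$ is even).

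The conjugation automorphism $\alpha$ restricts at each stage to the same $t\mapsto t^k$ map as the transition, so the induced $\alpha_*$ equals the identity on the first Shaneson summand and multiplication by $k$ on the second. Hence $1-\alpha_*$ vanishes on the first summand and is multiplication by $1-k$ on the second, and $\bbZ[1/k]/(k-1)\bbZ[1/k]\cong\bbZ/(k-1)=\cy{k-1}$ because $k$ is a unit modulo $k-1$. The Wang sequence thus yields short exact sequences
\[
0\to\bbZ\to L_4(\Zpi)\to\ker\!\bigl(1-\alpha_*|_{L_3(S)}\bigr)\to 0,\qquad
0\to\cy{k-1}\to L_5(\Zpi)\to\bbZ\to 0.
\]
Both extensions split: the $L_5$-extension automatically because its quotient is free, and the $L_4$-extension (for odd $k$) because the signature $L_4(\Zpi)\to L_4(\bbZ)=\bbZ$ induced by the augmentation retracts the $\bbZ$ summand. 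This gives the asserted values of $L_4(\Zpi)$ and $L_5(\Zpi)$. The symmetric case $L^0(\Zpi)=\bbZ$ follows by the same recipe with the further simplification that $L^{-1}(\bbZ)=0$, so no extra summand contributes.

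The main technical point to justify carefully is the claim that $t\mapsto t^k$ multiplies the second Shaneson summand by $k$. This is a standard property of the splitting, arising from the mapping-torus description together with the behavior of $L$-theory transfers under the $k$-fold self-cover $S^1\to S^1$.
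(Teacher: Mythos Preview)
Your argument is correct and follows essentially the same route as the paper: both run the Wang sequence for the twisted Laurent extension, compute $L_*(S)$ as a colimit over the maps $t\mapsto t^k$ on $L_*(\bbZ[\bbZ])$, identify the action of $\alpha_*$, and split the resulting extensions using the augmentation. Your use of the Shaneson splitting $L_n(\bbZ[\bbZ])\cong L_n(\bbZ)\oplus L_{n-1}(\bbZ)$ to organize all four cases at once is slightly more systematic than the paper's case-by-case treatment, but the content is the same; in particular, the paper's assertion that $\alpha$ induces multiplication by $k$ on $L_5(\bbZ[\bbZ])$ is exactly your claim about the second Shaneson summand. The only genuine difference is in part~(iv): the paper avoids invoking $L^{-1}(\bbZ)$ by first applying the skew-suspension isomorphism $L^0(\Zpi)\cong L^2(\Zpi,-1)$ and then running the symmetric Wang sequence in degree~$2$, whereas you appeal directly to $L^{-1}(\bbZ)=0$ via four-periodicity over $\bbZ$. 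Both are valid, though the paper's route sidesteps any discussion of symmetric $L$-theory in negative degrees.
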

\begin{proof}
Since the inclusion map $L_4(\bbZ) \to L_4(\bbZ[\bbZ])$ is an isomorphism, the map induced on $L_4(\bbZ[\bbZ])$ by  $\alpha$ is the identity (here $\bbZ[\bbZ]$ is the group ring generated by $\la b\ra$, and $\alpha(b) = b^k$). Since $L$-theory commutes with direct limits, it follows  that the inclusion $L_4(\bbZ) \to L_4(S)$ is an isomorphism. On the other hand, if $k$ is odd, the map induced by $\alpha$ on $L_3(\bbZ[\bbZ])$ is also the identity, so $L_3(\bbZ[\bbZ]) \cong L_3(S)\cong \cy 2$ detected by a codimension two Arf invariant.

 The group automorphism $\alpha\colon \bbZ[1/k] \to \bbZ[1/k]$ induces the identity map on both $L_3(S)$ and $L_4(S)$, since $\alpha$ acts as the identity on the coefficients $\bbZ$. By the  Wang sequence for the $L$-theory of a twisted Laurent ring, if $k$ is odd then 
 $$L_4(\Zpi) \cong L_4(\bbZ) \oplus L_2(\bbZ), $$
 detected by the ordinary signature and codimension two Arf invariant. This is a direct sum because $L_4(\bbZ)$ splits off, or alternatively, the map to $L_2(\bbZ)$ is induced by the boundary map $\delta$ in the Wang sequence above.
 
 The map induced by $\alpha$ on $L_5(\bbZ[\bbZ])$ is multiplication by $k$, so $L_5(S) = \bbZ[1/k]$. It follows that $\cok(1-\alpha_*\colon L_5(S) \to L_5(S)) = \cy{(k-1)}$, and hence $L_5(\Zpi) = \bbZ \oplus \cy{(k-1)}$ from the Wang sequence.

 Finally, we notice that $L^0(\Zpi) = L^2(\Zpi, -1)$ by the skew-suspension map (see \cite[Prop.~6.1]{ra10}). The long exact Wang sequence of \cite[4.1]{milgram-ranicki2}, together with similar calculations of the direct limits shows that $L^0(\bbZ) \cong L^0(\Zpi)$.
\end{proof}

\begin{definition} \label{def:Arf}
The \emph{codimension two Arf invariant}  for $\pi=B(k)$, $k$ odd, is the projection
$$\Arf\colon L_4(\Zpi) \to  \widetilde L_4(\Zpi) = L_4(\Zpi) / L_4(\bbZ) \cong   L_2(\bbZ)=\cy 2$$
 It is defined for any non-singular even form $(V, \lambda_V)$ on a finitely-generated free $\La$-module, as the Arf invariant of the element $[\lambda_V] \in L_4(\Zpi)$. If $M$ has an even equivariant intersection form $s_M$, we define $\Arf(M) := \Arf(\sMdag)$.
 \end{definition}

\begin{remark} We can compare the Wang sequences for computing $H_*(\pi;\bbZ)$ or $H_*(\pi;\cy 2)$ and the Wang sequence for the $L$-groups $L_*(\Zpi)$ via the universal homomorphisms in the assembly map
$$A_*\colon H_*( \Bpi; \bbL_0(\bbZ)) \to L_*(\Zpi)$$
as described in \cite[\S 1]{hmtw1}.  
In our case,  we only need the homomorphisms
$$\cI_j \colon H_j(\pi; \bbZ) \to L_j(\Zpi), \text{\ for \ } j=0,1$$
and
$$ \kappa_j\colon
H_j(\pi;\cy 2) \to L_{j+2}(\Zpi), \text{\ for \ } j = 0, 1, 2$$
In general
the range of these homomorphisms should be localized at 2, but in low dimensions we have integral lifts for these maps (see Kirby-Taylor \cite[\S 2-3]{kirby-taylor1}).
\end{remark}
These homomorphisms give natural transformations between the two Wang sequences.

\begin{lemma} 
The assembly map $A_*$ is an isomorphism for $* = 4,5$.
\end{lemma}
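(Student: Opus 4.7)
The plan is to exploit the semidirect product decomposition $\pi = \bbZ[1/k] \rtimes \bbZ$, which realizes $B\pi$ as the mapping torus of the self-map of $B\bbZ[1/k]$ induced by multiplication by $k$ (the automorphism $\alpha$). For any generalized homology theory, a mapping torus yields a Wang long exact sequence, giving
\[
\cdots \to H_n(B\bbZ[1/k];\bbL_0) \xrightarrow{1-\alpha_*} H_n(B\bbZ[1/k];\bbL_0) \to H_n(B\pi;\bbL_0) \to H_{n-1}(B\bbZ[1/k];\bbL_0) \to \cdots
\]
which I would compare, via the natural assembly transformation, with the $L$-theory Wang sequence already recalled at the start of this section.

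First I would verify that the two Wang sequences fit into a commutative ladder with the assembly map as vertical arrows, using naturality of $A_*$ for group extensions as in \cite[\S 1]{hmtw1}. Then by the Five Lemma, it suffices to prove that the assembly map is an isomorphism for the subgroup $\bbZ[1/k]$ in degrees $3$, $4$, and $5$.

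Next, since $\bbZ[1/k] = \colim(\bbZ \xrightarrow{k} \bbZ \xrightarrow{k} \cdots)$ is a filtered colimit of copies of $\bbZ$, and both $H_*(B(-);\bbL_0)$ and $L_*(\bbZ[-])$ commute with filtered colimits of groups (as both sides admit presentations via finitely generated data, as was already exploited in the computation of Lemma~\ref{inputb}), the problem reduces to verifying that the assembly map is an isomorphism for $\pi = \bbZ$ in all the required degrees. This is classical: Shaneson splitting identifies $A_*$ for $\bbZ$ with the direct sum of the assembly map for the trivial group and its suspension, and the assembly map for the trivial group is tautologically an isomorphism.

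The main thing to be careful about will be the naturality of the assembly map across the Wang sequences, i.e.\ that the boundary maps and the self-maps $1 - \alpha_*$ commute on the nose with $A_*$. As a consistency check, the abelian-group answers for $L_4(\bbZ\pi)$ and $L_5(\bbZ\pi)$ recorded in Lemma~\ref{inputb} match the Atiyah-Hirzebruch computation of $H_4(B\pi;\bbL_0)$ and $H_5(B\pi;\bbL_0)$ obtained from Lemma~\ref{lem:homology2} and the low-dimensional homotopy groups of $\bbL_0(\bbZ)$, providing independent confirmation that $A_*$ is at least abstractly an isomorphism in these degrees.
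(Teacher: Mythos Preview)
Your approach is correct and shares its core idea with the paper's proof: both exploit the naturality of the assembly map with respect to the Wang sequences arising from the semidirect product $\pi = \bbZ[1/k]\rtimes\bbZ$. The paper's argument is terser and more computational---it simply identifies the domain of $A_*$ via the Atiyah--Hirzebruch decomposition (using Lemma~\ref{lem:homology2}) and then appeals to the explicit $L$-group calculations of Lemma~\ref{inputb} together with naturality of the two Wang sequences. Your route is slightly more structural: rather than computing both sides and matching them, you reduce via the Five Lemma to the assembly map for $\bbZ[1/k]$, pass to the direct limit to reach $\bbZ$, and then invoke Shaneson splitting. This has the mild advantage that it does not logically require the explicit values of $L_*(\bbZ\pi)$ in Lemma~\ref{inputb} (you only use them as a consistency check), whereas the paper's proof genuinely leans on those computations.
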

\begin{proof} The domains of the assembly maps are given by
 $$H_4( \Bpi; \bbL_0(\bbZ)) = H_0(\pi;\bbZ) \oplus H_2(\pi;\cy 2)$$ and $$H_5( \Bpi; \bbL_0(\bbZ)) = H_1(\pi, \bbZ)\ .$$
 The result follows by naturality of the Wang sequences for homology and $L$-theory, and the calculations  in Lemma \ref{lem:homology2} and Lemma \ref{inputb}.
 \end{proof}
It follows that the surgery obstruction groups in these dimensions are generated by closed manifold surgery obstructions. Explicitly, we use
$S^1\times E8$ with two  injections of $\bbZ \to \pi$, distinct up to conjugation.

\begin{proof}[The proof of Theorem A] 
We have now shown that the groups $B(k)$ satisfy properties (W-AA) in Section~\ref{sec:BS}. Since $H^2(\pi;\cy 2) \subseteq \cy 2$ for the solvable Baumslag-Solitar groups, the $w_2$-type is equivalent to the type and hence Theorem~A follows from Theorem~C. 
\end{proof}
For manifolds $M$ with type (III) there is  a relation between the Kirby-Siebenmann invariant and the codimension two Arf invariant from Definition~\ref{def:Arf}.  We first give a more general result.

 \begin{theorem}\label{thm:Arf}
Assume that $M$ is a closed oriented $4$-manifold with $s_M$ even and $w_2$-type $(\pi,w)$, that lies in the $F_{2,2}$-term of the filtration for the James spectral sequence of the normal $1$-type. Then one has
$$
KS(M)\equiv {\frac{\sign(M)}{8}} + \langle w, \rho(M,\tilde\nu) \rangle \ (\Mod 2),
$$
where $\rho\colon F_{2,2}\, \Omega_4(\pi,w) \twoheadrightarrow E_{2,2}^\infty \cong H_2(\pi;\cy 2)/\Image(d_2, d_3)$ is the  natural projection and $\tilde\nu$ is a normal 1-smoothing for $M$. In particular, $w$ evaluates trivially on the images of the differentials ending in $E_{2,2}^r$.
\end{theorem}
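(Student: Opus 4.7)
The plan is to show that both sides of the congruence define $\cy 2$-valued homomorphisms on the filtration subgroup $F_{2,2}\Omega_4(B(\pi,w))$ that descend to the subquotient $E^\infty_{2,2}$, and then to match them there using the topological Freedman--Kirby formula applied to a characteristic surface.

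By Lemma~\ref{stable_bordism}, every class in $\Omega_4(B(\pi,w))$ has signature divisible by~$8$, so $\psi(M,\tilde\nu) := KS(M) + \sign(M)/8 \pmod 2$ is a well-defined homomorphism $\Omega_4(B(\pi,w)) \to \cy 2$ depending only on the homeomorphism type of $M$, while $\chi(M,\tilde\nu) := \langle w, \rho(M,\tilde\nu)\rangle$ is a homomorphism on $F_{2,2}\Omega_4(B(\pi,w))$ by bilinearity. Both vanish on the subgroup $F_{0,4}\Omega_4(B(\pi,w))$: a representative $N$ there is topologically spin with null-homotopic reference map, so Rochlin's theorem gives $KS(N)\equiv\sign(N)/8\pmod 2$, hence $\psi(N) = 0$, while $\rho(N) = 0$ gives $\chi(N) = 0$. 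Since $\Omega_3^{TopSpin}(\ast)=0$ forces $F_{1,3} = F_{0,4}$, both $\psi$ and $\chi$ descend to homomorphisms on $F_{2,2}/F_{1,3} = E^\infty_{2,2}$.

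To identify $\psi=\chi$ on $E^\infty_{2,2}$, I would check the equality on generators. A generator is represented by a class $f_*[\Sigma] \in H_2(\pi;\cy 2)$ with $\Sigma$ a closed orientable surface and $f\colon\Sigma\to K(\pi,1)$, and can be lifted to a $(\pi,w)$-structure on a model $4$-manifold $N=N(\Sigma,f)$ built as an appropriately twisted $T^2$-bundle (or $S^2$-bundle) over $\Sigma$, whose second Stiefel--Whitney class matches $c^*(w)$ for the composite reference map $c=f\circ\pi$. For this model one has $\sign(N) = 0$; then picking a locally flat characteristic surface $\Sigma'\subset N$ Poincar\'e dual to $w_2(N)$, the topological Freedman--Kirby formula
\[
\sign(N) - \Sigma'\cdot\Sigma' \equiv 8\,\Arf(N,\Sigma') + 8\,KS(N) \pmod{16}
\]
together with a direct model computation of the self-intersection and Arf terms identifies $KS(N)\pmod 2$ with $\langle f^*(w),[\Sigma]\rangle = \langle w,f_*[\Sigma]\rangle = \chi(N)$, establishing $\psi(N)=\chi(N)$ on generators.

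Finally, the a priori well-definedness of $\psi$ on the whole $F_{2,2}$, together with the established equality $\psi=\chi$, forces $\chi$ to descend from $H_2(\pi;\cy 2)$ to $E^\infty_{2,2}$; equivalently, $\langle w,-\rangle$ vanishes on the image of every differential $d_r$ landing in $E^r_{2,2}$, which is the final assertion of the theorem. The main obstacle is the model computation in the previous paragraph: identifying the codimension-two Arf invariant and self-intersection of $\Sigma'\subset N(\Sigma,f)$ with the $\cy 2$-pairing $\langle f^*(w),[\Sigma]\rangle$, a calculation which uses that the twisting class $f^*(w)$ is precisely what obstructs the spin/smooth structure on $N$ near $\Sigma'$.
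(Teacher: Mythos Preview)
Your reduction in the first paragraph is sound: both $\psi$ and $\chi$ are homomorphisms on $F_{2,2}$ that kill $F_{1,3}=F_{0,4}$, so the question does reduce to checking $\psi=\chi$ on $E^\infty_{2,2}$. (A minor point: Lemma~\ref{stable_bordism} assumes $\gdim\pi\leq 2$, which the theorem does not; but signature divisibility by $8$ on $F_{2,2}$ follows anyway, e.g.\ by naturality from the universal case.)

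The gap is in your model construction. The edge map $\rho$ sends a bordism class in $F_{2,2}$ to an element of $H_2(\pi;\Omega_2^{TopSpin})$ by reading off the spin bordism class of the preimage of a generic point in a $2$-cell of $K(\pi,1)$. For your $S^2$-bundle $N(\Sigma,f)$, these preimages are copies of $S^2$, which bounds $D^3$ as a spin manifold, so $\rho(N)=0$ regardless of $f$: this model never hits a nontrivial generator. The product $\Sigma\times T^2$ with the Lie spin structure on $T^2$ does give $\rho=f_*[\Sigma]$, but it is spin, so it only lives in $\Omega_4(\pi,w)$ when $f^*(w)=0$, forcing $\chi=0$. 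Thus all of your proposed models satisfy $\psi=\chi=0$ tautologically, and you never actually test the case $\chi=1$. The Freedman--Kirby step you flag as the ``main obstacle'' therefore has nothing to act on; the real difficulty is producing, for $\langle w,f_*[\Sigma]\rangle=1$, a manifold in the correct $w_2$-type with $\rho=f_*[\Sigma]$, and your bundle ansatz cannot do this.

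The paper avoids this entirely by a naturality argument. The map $w\colon K(\pi,1)\to K(\cy 2,2)$ pulls the fibration $B(\pi,w)\to K(\pi,1)$ back from the universal one $BSTOP\to K(\cy 2,2)$, inducing a map of James spectral sequences. In the universal case the target bordism group is just $\Omega_4^{STOP}\cong\bbZ\oplus\cy 2$, one computes directly that $F_{2,2}=8\bbZ\oplus\cy 2$ with quotient $\cy 8$ detected by $\sign\pmod 8$, and the KS formula on $F_{2,2}$ is read off from the elementary simply-connected examples $\CP^2$, $\CH$, $E8$. Pulling back along $w_*$ then gives the stated formula, and the well-definedness of the pairing $\langle w,-\rangle$ on $E^\infty_{2,2}$ comes for free.
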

\begin{proof} 
As in Lemma~\ref{stable_bordism}, the normal 1-type $B(\pi,w)$ pulls back from $BSTOP$ via the map $w\colon K(\pi,1) \to K(\cy 2,2)$. We compare the James spectral sequences for these two fibrations, knowing that oriented topological bordism is classified by
 \[
\Omega_4(*) \cong \bbZ \oplus \cy 2
\]
 via the signature and Kirby-Siebenmann invariant. From the well known computations
 \[
 H_4(K(\cy 2,2);\bbZ) \cong\cy 4  \quad \text{ and } \quad H_i((K(\cy 2,2);\cy 2) \cong\cy 2, \ i=2,3,
 \]
it follows that the $F_{2,2}$-term in this case is $8\bbZ\oplus \cy 2$. Moreover, the quotient $\cy 8$ must be the signature modulo  $8$. Looking at the examples of $\CP^2$, $\CH =*\CP^2$ and $E8$, we see that the Kirby-Siebenmann invariant on the $F_{2,2}$-term is given by 
\[
\sign/8 \, (\Mod 2)+ p_2\colon \Omega_4(*)\to \cy 2
\]
where $p_2$ is the projection onto the second $\cy 2$. On comparing this to $ \Omega_4(\pi,w)$, the only remaining observation is that $w\colon K(\pi,1) \to K(\cy 2,2)$ induces a map 
\[
w_* : H_2(\pi;\cy 2)/\Image(d_2, d_3) \to H_2(K(\cy 2,2);\cy 2) \cong\cy 2
\]
that translates into the evaluation $ \langle w, - \rangle $ used in the statement of the Theorem.
\end{proof}
\begin{corollary}\label{cor:Arf}
If $M$ is a closed oriented $4$-manifold of type \textup{(III)}, with solvable Baumslag-Solitar fundamental group, then
$$KS(M)\equiv {\frac{\sign(M)}{8}} + \Arf(M)\ (\Mod 2).$$
\end{corollary}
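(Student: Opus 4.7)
My plan is to combine Theorem~\ref{thm:Arf} with the stable classification of Lemma~\ref{stable_bordism} to reduce the formula to a check on two representative manifolds. By Theorem~\ref{thm:Arf}, the corollary amounts to the identity $\langle w, \rho(M,\tilde{\nu})\rangle = \Arf(M)$ in $\cy 2$. For $\pi = B(k)$ with $k$ odd (the only case where type~\textup{(III)} occurs), applying the universal coefficient theorem to Lemma~\ref{lem:homology2} yields $H_2(\pi;\cy 2) = \cy 2$, and since $\gdim\pi \leq 2$ no differentials can land in $E^r_{2,2}$ of the James spectral sequence of the normal $1$-type. Thus $\rho(M,\tilde{\nu})$ lives unambiguously in $H_2(\pi;\cy 2)$, and paired with the non-trivial class $w \in H^2(\pi;\cy 2) = \cy 2$ it is a canonical $\cy 2$-valued invariant.

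Both invariants descend to the stable $B(\pi,w)$-bordism class of $M$: $\rho$ by construction, and $\Arf$ because hyperbolic forms on free $\La$-modules vanish in $L_4(\Zpi)/L_4(\bbZ)$. By Lemma~\ref{stable_bordism}, once $\sign(M)$ is fixed (and it is a multiple of $8$), the stable type~\textup{(III)} class is determined by $\rho(M,\tilde{\nu}) \in \cy 2$. It therefore suffices to check that both invariants agree on representatives of the two stable classes. The trivial class is represented by a minimal type~\textup{(III)} manifold $M_0$ from Lemma~\ref{lem:minimal}: as the boundary of the $5$-dimensional thickening $W(\cP,w)$ over $B(\pi,w)$, we have $[M_0, \tilde\nu] = 0$ in $\Omega_4(B(\pi,w))$, so $\rho(M_0,\tilde{\nu}) = 0$, and $s_{M_0}=0$ gives $\Arf(M_0) = 0$.

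For the non-trivial class, I would invoke Theorem~B to produce a type~\textup{(III)} manifold $M_1$ with $\pi_1(M_1) = B(k)$ whose reduced intersection form realizes the non-trivial element of $L_4(\Zpi)/L_4(\bbZ)$, so $\Arf(M_1) = 1$. If $\rho(M_1,\tilde{\nu})$ were zero, then by the stable classification $M_1$ would be stably homeomorphic to some $M_0 \Sharp N$ with $N$ simply connected and topologically spin (matching signatures and Kirby-Siebenmann invariants by taking appropriate multiples of the $E_8$-manifold). But $\sMdagg{M_0\Sharp N}$ is then extended from the integral form $s_N$ and hence its class lies in the image of $L_4(\bbZ) \to L_4(\Zpi)$, forcing $\Arf(M_0\Sharp N)=0$ and contradicting the stability of $\Arf$ under stable homeomorphism. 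Hence $\rho(M_1,\tilde{\nu}) \ne 0$, so $\Arf$ and $\langle w, \rho\rangle$ agree on both classes, proving the formula. The main obstacle I anticipate is ensuring rigorously that $\rho$ is unaffected by connected sum with simply connected topologically spin manifolds (including $E_8$-summands used to adjust $\sign$ and $KS$), which is what legitimizes the stable-equivalence argument above.
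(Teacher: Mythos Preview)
Your overall strategy matches the paper's: both apply Theorem~\ref{thm:Arf} to reduce to showing $\langle w,\rho(M,\tilde\nu)\rangle=\Arf(M)$ as $\cy 2$-valued homomorphisms on $\Omega_4(\pi,w)\cong 8\bbZ\oplus\cy 2$. Your check at the minimal model $M_0$ is correct, and the worry you flag at the end is not a real obstacle: a simply-connected spin summand contributes only to the $E^\infty_{0,4}$-term of the James spectral sequence, so $\rho$ is unchanged.

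The genuine gap is the production of the second representative $M_1$. You invoke Theorem~B to obtain a type~(III) manifold with $\Arf(M_1)=1$, but Theorem~B is proved \emph{after} Corollary~\ref{cor:Arf} in the paper and its proof explicitly cites Corollary~\ref{cor:Arf}. More to the point, the realization statement you need---that an even form with non-trivial $\Arf$ is carried by a type~(III) manifold---amounts to knowing that the map $\Omega_4(\pi,w)\to L_4(\Zpi)$, $M\mapsto[\sMdag]$, surjects onto the $\cy 2$-summand, and that is precisely the assertion ``$\Arf$ is non-trivial on $\Omega_4(\pi,w)$'' which you are trying to prove. Your contradiction argument only establishes the implication $\rho=0\Rightarrow\Arf=0$; it cannot rule out that $\Arf$ is identically zero on $\Omega_4(\pi,w)$, which is exactly the alternative you must exclude.

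The paper closes this gap by appealing instead to Lemma~\ref{lem:theta_relation}. For the minimal type~(III) manifold $M_0$ one has $\pi_2(B(M_0))=\pi_2(M_0)^\dag=0$, so $B(M_0)$ is the normal $1$-type $B(\pi,w)$ and $\theta(N,f)=[\sMdagg{N}]$ for $[N,f]\in\Omega_4(B(M_0))_{M_0}$. Lemma~\ref{lem:theta_relation} gives $\theta=\kappa_2\circ\rho$ via Davis's surgery formula, and since $\kappa_2\colon H_2(\pi;\cy 2)\to\widetilde L_4(\Zpi)$ is an isomorphism for $\pi=B(k)$, this identifies $\Arf$ with the projection onto the second summand without first having to manufacture a type~(III) manifold with $\Arf=1$. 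That identification is the missing ingredient in your argument; once you supply it, your proof collapses to the paper's.
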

\begin{proof} 
 For $\pi \cong B(k)$ the group $H_2(\pi;\cy 2)$ is either $0$ or $\cy 2$, depending on whether $k$ is even or odd. For type~(III) we must be in the latter case and hence our $w_2$-type is $(\pi,w)$ with $w\neq 0$. By Lemma~\ref{stable_bordism} we have an isomorphism
 \[
\Omega_4(\pi,w) \cong 8\bbZ\oplus H_2(\pi;\cy 2)  \cong 8\bbZ\oplus \cy 2,
\]
where the isomorphism on the right hand side is induced by evaluating $w$. 
Both sides of the equation define a homomorphism on this group and we want to prove equality. 
By Lemma~\ref{lem:theta_relation}, the Arf-invariant gives the nontrivial projection onto the second summand $\cy 2$. Thus our result follows from Theorem \ref{thm:Arf}.
\end{proof}
 
 \begin{proof}[Proof of Theorem B]
 The first statement of Theorem~B  has already been proved in Corollary~\ref{cor:rad} and Corollary~\ref{cor:free}.  It remains to establish parts (i) and (ii) concerning the realizability of the forms. Note that by topological surgery  \cite{freedman1}, a form is realizable if and only if it is stably realizable, where ``stably" means after orthogonal sum with hyperbolic forms or after connected sum with $S^2\times S^2$'s for forms and manifolds respectively.

Suppose that $(F, \lambda)$ is a non-singular hermitian form on a finitely-generated, stably-free $\La$-module $F$. After stabilizing by hyperbolic forms, we may assume that $F$ is $\La$-free. If $F$ is an odd form, then it represents an element in  the Witt group $L^0(\Zpi) = \bbZ$, so there exists metabolic forms $\omega$, $\omega'$ on free $\La$-modules so that $\lambda \perp \omega \cong \lambda_0 \perp \omega'$, where $\lambda_0$ is a standard form (diagonal $\pm 1$) with the same signature as $\lambda$. By \cite[Lemma 3]{hteichner1}, we may assume that $\omega$ and $\omega'$ are hyperbolic forms, and hence $\lambda$ is stably realizable. Since the form $\lambda_0$ can be realized by manifolds with different $KS$-invariants, there are two possibilities  (as stated in part (ii)).

Now suppose that $(F, \lambda)$ is an even form. Then there exists a quadratic refinement $\mu$ so that $(F, \lambda, \mu)$ represents an element of
$L_4(\Zpi) = \bbZ \oplus \cy 2$ ($k$ odd) or $L_4(\Zpi) = \bbZ$ ($k$ even).  By Lemma \ref{stable_bordism}, any element of the $L$-group is realizable by a manifold of type II or III. It follows that  $(F, \lambda)$ is stably realizable, and hence realizable by a $4$-manifold.  For the remaining assertion in  part (ii), note that 
 the Kirby-Siebenmann invariant is determined by the intersection form (see Corollary \ref{cor:Arf} for $k$ odd).
\end{proof}
\begin{remark} For manifolds with geometrically $2$-dimensional fundamental group $\pi$, the same argument proves that any such even form  is realizable by an $s$-cobordism, whenever the assembly map $A_4\colon H_4(\pi; \bbL_0) \to L_4(\Zpi)$ is an isomorphism.
\end{remark}

\providecommand{\bysame}{\leavevmode\hbox to3em{\hrulefill}\thinspace}
\providecommand{\MR}{\relax\ifhmode\unskip\space\fi MR }
\providecommand{\MRhref}[2]{%
  \href{http://www.ams.org/mathscinet-getitem?mr=#1}{#2}
}
\providecommand{\href}[2]{#2}

\end{document}